\begin{document}

\mathchardef\mhyphen="2D

\newtheorem{The}{Theorem}[section]

\newtheorem{Lem}[The]{Lemma}

\newtheorem{Prop}[The]{Proposition}

\newtheorem{Cor}[The]{Corollary}

\newtheorem{Rem}[The]{Remark}

\newtheorem{Obs}[The]{Observation}

\newtheorem{SConj}[The]{Standard Conjecture}

\newtheorem{Titre}[The]{\!\!\!\! }

\newtheorem{Conj}[The]{Conjecture}

\newtheorem{Question}[The]{Question}

\newtheorem{Prob}[The]{Problem}

\newtheorem{Def}[The]{Definition}

\newtheorem{Not}[The]{Notation}

\newtheorem{Claim}[The]{Claim}

\newtheorem{Conc}[The]{Conclusion}

\newtheorem{Ex}[The]{Example}

\newtheorem{Fact}[The]{Fact}

\newtheorem{Formula}[The]{Formula}

\newtheorem{Formulae}[The]{Formulae}

\newtheorem{The-Def}[The]{Theorem and Definition}

\newtheorem{Prop-Def}[The]{Proposition and Definition}

\newtheorem{Lem-Def}[The]{Lemma and Definition}

\newtheorem{Cor-Def}[The]{Corollary and Definition}

\newtheorem{Conc-Def}[The]{Conclusion and Definition}

\newtheorem{Terminology}[The]{Note on terminology}

\newcommand{\C}{\mathbb{C}}

\newcommand{\R}{\mathbb{R}}

\newcommand{\N}{\mathbb{N}}

\newcommand{\Z}{\mathbb{Z}}

\newcommand{\Q}{\mathbb{Q}}

\newcommand{\Proj}{\mathbb{P}}

\newcommand{\Rc}{\mathcal{R}}

\newcommand{\Oc}{\mathcal{O}}

\newcommand{\Vc}{\mathcal{V}}

\newcommand{\Id}{\operatorname{Id}}

\newcommand{\pr}{\operatorname{pr}}

\newcommand{\rk}{\operatorname{rk}}

\newcommand{\del}{\partial}

\newcommand{\delbar}{\bar{\partial}}

\newcommand{\Cdot}{{\raisebox{-0.7ex}[0pt][0pt]{\scalebox{2.0}{$\cdot$}}}}

\newcommand\nilm{\Gamma\backslash G}

\newcommand\frg{{\mathfrak g}}

\newcommand{\fg}{\mathfrak g}

\newcommand{\Oh}{\mathcal{O}}

\newcommand{\Kur}{\operatorname{Kur}}

\newcommand\gc{\frg_\mathbb{C}}

\newcommand\slawek[1]{{\textcolor{red}{#1}}}

\newcommand\dan[1]{{\textcolor{blue}{#1}}}

\newcommand\question[1]{{\textcolor{green}{#1}}}

\def\pm{P_m(X,\omega,\chi)}

\def\fmv{F_m[\chi+i\partial\bar{\partial}v]}

\def\fmp{F_m[\chi+i\partial\bar{\partial}\varphi]}
\def\pa{\partial}

\begin{center}

  {\Large\bf $m$-Positivity and Regularisation}

\end{center}

\begin{center}

{\large S\l{}awomir Dinew and Dan Popovici}

\end{center}

\vspace{1ex}

\noindent{\small{\bf Abstract.} Starting from the notion of $m$-plurisubharmonic function introduced recently by Dieu and studied, in particular, by Harvey and Lawson, we consider $m$-(semi-)positive $(1,\,1)$-currents and Hermitian holomorphic line bundles on complex Hermitian manifolds and prove two kinds of results: vanishing theorems and $L^2$-estimates for the $\bar\partial$-equation in the context of $C^\infty$ $m$-positive Hermitian fibre metrics; global and local regularisation theorems for $m$-semi-positive $(1,\,1)$-currents whose proofs involve the use of viscosity subsolutions for a certain Monge-Amp\`ere-type equation and the associated Dirichlet problem.}

\vspace{1ex}

\section{Introduction}\label{section:introd} The main object of study in this paper is described in the following definition. It fits the mould of the notion of $m$-plurisubharmonic ($m$-psh for short) function introduced by Dieu in [Die06] and further studied by Harvey and Lawson in [HL13], Verbitsky in [Ver10] and Dinew in [Din22].  

\begin{Def}\label{Def:m-semi-pos} Let $(X,\,\omega)$ be a complex Hermitian manifold with $\mbox{dim}_\C X = n$. Let $m\in\{1,\dots , n\}$ and let $T$ be a real current of bidegree $(1,\,1)$ on $X$.

  \vspace{1ex}

  (i)\, We say that $T$ is {\bf $m$-semi-positive (resp. $m$-positive) with respect to $\omega$} if the bidegree-$(m,\,m)$-current $T\wedge\omega^{m-1}$ is {\bf strongly semi-positive (resp. strongly positive)} on $X$. We denote this property by $T\geq_{m,\,\omega}0$ (resp. $T>_{m,\,\omega}0$).

  \vspace{1ex}

  (ii)\, We say that $T$ is {\bf $m$-semi-negative (resp. $m$-negative) with respect to $\omega$} if $-T$ is {\bf $m$-semi-positive (resp. $m$-positive) with respect to $\omega$}. We denote this property by $T\leq_{m,\,\omega}0$ (resp. $T<_{m,\,\omega}0$).

\end{Def}

When $\varphi:U\longrightarrow\R\cup\{-\infty\}$ is an upper semi-continuous function on an open subset $U\subset X$, the notions studied in the above references can be expressed in the form of the equivalences:  \begin{eqnarray}\label{eqn::m-semi-pos_equivalence-potential}\nonumber\varphi \hspace{2ex} \mbox{is} \hspace{1ex}  m\mbox{-psh} \hspace{1ex} \mbox{on} \hspace{1ex} U \hspace{1ex} \mbox{with respect to} \hspace{1ex} \omega & \iff & T:=i\partial\bar\partial\varphi\geq_{m,\,\omega}0 \hspace{1ex} \mbox{on} \hspace{1ex} U \\
  \varphi \hspace{2ex} \mbox{is} \hspace{1ex} \mbox{strictly} \hspace{1ex}  m\mbox{-psh} \hspace{1ex} \mbox{on} \hspace{1ex} U \hspace{1ex} \mbox{with respect to} \hspace{1ex} \omega & \iff &  T:=i\partial\bar\partial\varphi>_{m,\,\omega}0 \hspace{1ex} \mbox{on} \hspace{1ex} U.\end{eqnarray}

When $m=1$, $1$-(semi-)positivity for $(1,\,1)$-currents, respectively $1$-plurisubharmonicity for upper semi-continuous functions, is equivalent to the classical notion of (semi-)positivity for $(1,\,1)$-currents, respectively plurisubharmonicity for upper semi-continuous functions. These classical notions are independent of the Hermitian metric $\omega$, which does not feature in their definitions.

However, when $m\geq 2$, these notions depend on the choice of $\omega$. To see this, one may take \begin{eqnarray*}\omega = a\,idz_1\wedge d\bar{z}_1 + b\,idz_2\wedge d\bar{z}_2 + c\,idz_3\wedge d\bar{z}_3 \hspace{3ex}\mbox{and}\hspace{3ex} T = p\,idz_1\wedge d\bar{z}_1 + q\,idz_2\wedge d\bar{z}_2 + r\,idz_3\wedge d\bar{z}_3,\end{eqnarray*} with $a,b,c>0$ and $p,q,r\in\R$. Then, $T\wedge\omega$ equals \begin{eqnarray*}(pb+qa)\,idz_1\wedge d\bar{z}_1\wedge idz_2\wedge d\bar{z}_2 + (pc+ra)\,idz_1\wedge d\bar{z}_1\wedge idz_3\wedge d\bar{z}_3 + (qc+rb)\,idz_2\wedge d\bar{z}_2\wedge idz_3\wedge d\bar{z}_3,\end{eqnarray*} so we get the equivalence: \begin{eqnarray*}T\geq_{2,\,\omega}0 \iff pb+qa\geq 0, \hspace{2ex} pc+ra\geq 0  \hspace{2ex}\mbox{and}\hspace{2ex} qc+rb\geq 0.\end{eqnarray*} In particular, the current $T$ defined by the coefficients $p=q=1$ and $r=-1$ satisfies the condition $T\geq_{2,\,\omega}0$ when, for example, $a=b=1$ and $c=3$, but this $T$ is not $2$-semi-positive with respect to $\omega$ when $c<a$ or $c<b$.

\vspace{2ex}

It seems natural to extend the $m$-semi-positivity notion to holomorphic line bundles in order to give it greater geometric significance.

\begin{Def}\label{Def:m-semi-pos_line-bundle} Let $L\longrightarrow X$ be a holomorphic line bundle over a complex manifold with $\mbox{dim}_\C X = n$. Let $m\in\{1,\dots , n\}$.

 We say that $L$ is {\bf $m$-semi-positive with a $C^\infty$ metric} (respectively, {\bf $m$-semi-positive with a singular metric}) if there exist a $C^\infty$ Hermitian metric $\omega$ on $X$ and a $C^\infty$ Hermitian fibre metric (respectively a singular Hermitian fibre metric) $h$ on $L$ such that the curvature form (respectively the curvature current) of the Chern connection of $(L,\,h)$ is {\bf $m$-semi-positive with respect to $\omega$}: \begin{eqnarray*} i\Theta_h(L)\geq_{m,\,\omega}0.\end{eqnarray*} 

\end{Def}  

The related notion of {\it uniformly $q$-positive} holomorphic line bundle appears in part $(2)$ of Definition 2.1. of [Yan18] (with a shift of $1$ in that $q$ compared to our $m$) where a strict inequality is imposed. (cf. also Lemma \ref{Lem:m-pos_eigenvalues} below).

\vspace{2ex}

The purpose of this work is to study two aspects of $m$-(semi-)positivity for $(1,\,1)$-currents, upper semi-continuous functions and (both $C^\infty$ and singular) Hermitian holomorphic line bundles. Further aspects are studied in the sequel [DP25] to this paper, where, among other things, further generalisations of $m$-(semi-)positivity to line bundles (i.e. generalisations of Definition \ref{Def:m-semi-pos_line-bundle}) are introduced.

\vspace{2ex}

Firstly, we study in $\S$\ref{section:m-semi-pos} the consequences of the existence of an $m$-semi-positive or an $m$-semi-negative $C^\infty$ Hermitian fibre metric $h$ on a holomorphic line bundle $L$ over a compact complex manifold $X$. Using the Bochner-Kodaira-Nakano inequality relating two Laplacians and the curvature form $i\Theta_h(L)$, we get two kinds of results when $X$ admits a K\"ahler metric $\omega$:

\vspace{1ex}

(a)\, {\bf Vanishing theorems} of the following type (see Theorem \ref{The:vanishing_m-pos_Kaehler} for a more precise statement):

\begin{The}\label{The:vanishing_m-pos_Kaehler_introd} (i)\, In bidegree $(n,\,\cdot\,)$, under the curvature $q$-positivity assumption $i\Theta_h(L)\geq_{q,\,\omega} c\,\omega$ on $X$ for some $q\in\{1,\dots , n\}$ and some constant $c>0$, we have: \begin{eqnarray}\label{eqn:vanishing_m-pos_nq_introd}H^{n,\,l}_{\bar\partial}(X,\,L) = \{0\}   \hspace{5ex} \mbox{for all} \hspace{2ex} l\geq q.\end{eqnarray}

\vspace{1ex}

(ii)\, In bidegree $(\,\cdot\,,\,0)$, under the curvature $(n-p)$-negativity assumption $i\Theta_h(L)\leq_{n-p,\,\omega} -c\,\omega$ on $X$ for some $p\in\{0,\dots , n-1\}$ and some constant $c>0$, we have: \begin{eqnarray}\label{eqn:vanishing_m-pos_p0_introd}H^{l,\,0}_{\bar\partial}(X,\,L) = \{0\}  \hspace{5ex} \mbox{for all} \hspace{2ex} l\leq p.\end{eqnarray}

\end{The}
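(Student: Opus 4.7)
My plan is to combine Hodge theory on the compact K\"ahler manifold $X$ with the Bochner--Kodaira--Nakano (BKN) identity and a pointwise eigenvalue analysis of the curvature commutator $[i\Theta_h(L),\Lambda_\omega]$. Since $\omega$ is K\"ahler, every class in $H^{n,l}_{\bar\partial}(X,L)$ (resp.\ $H^{l,0}_{\bar\partial}(X,L)$) is represented by a unique $\Delta_{\bar\partial}$-harmonic $L$-valued form $\alpha$. Writing $D'$ for the $(1,0)$-part of the Chern connection of $(L,h)$, the BKN identity reads $\Delta_{\bar\partial} = \Delta_{D'} + [i\Theta_h(L),\Lambda_\omega]$; pairing with $\alpha$ and integrating over $X$ yields
\[ \|D'\alpha\|^2 + \|{D'}^*\alpha\|^2 + \int_X \langle [i\Theta_h(L),\Lambda_\omega]\alpha,\alpha\rangle\,dV_\omega = 0. \]
Consequently it suffices, under each of the two curvature hypotheses, to establish a pointwise lower bound $\langle [i\Theta_h(L),\Lambda_\omega]\alpha,\alpha\rangle \geq c'|\alpha|^2$ with $c' > 0$; this forces $\alpha \equiv 0$ and hence the stated vanishing.

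Next, diagonalising $i\Theta_h(L)$ simultaneously with $\omega$ at a point produces real $\omega$-eigenvalues $\gamma_1,\dots,\gamma_n$, and in a unitary coframe $(e^k)$ adapted to that diagonalisation the classical curvature-term formulae (as in Demailly's text on complex analytic geometry) give
\[ \langle [i\Theta_h(L),\Lambda_\omega]\alpha,\alpha\rangle = \sum_{|J|=l}\Bigl(\sum_{j\in J}\gamma_j\Bigr)|\alpha_{1\dots n,J}|^2 \quad\text{for } (n,l)\text{-forms}, \]
\[ \langle [i\Theta_h(L),\Lambda_\omega]\alpha,\alpha\rangle = -\sum_{|I|=l}\Bigl(\sum_{k\notin I}\gamma_k\Bigr)|\alpha_I|^2 \quad\text{for } (l,0)\text{-forms}. \]
I would then invoke Lemma \ref{Lem:m-pos_eigenvalues} to reformulate the hypothesis $i\Theta_h(L)\geq_{q,\omega} c\omega$ as $\sum_{j\in J'}\gamma_j \geq qc$ for every $|J'|=q$, and $i\Theta_h(L)\leq_{n-p,\omega}-c\omega$ as $\sum_{j\in J'}\gamma_j \leq -(n-p)c$ for every $|J'|=n-p$. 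The desired pointwise bounds thus reduce to extending these inequalities from sums of $q$ (resp.\ $n-p$) eigenvalues to sums of $l \geq q$ (resp.\ $n-l \geq n-p$) eigenvalues of the same sign.

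The hard step---really the only non-routine part---is this eigenvalue monotonicity. For $|J|=l\geq q$, I would average over the $\binom{l}{q}$ size-$q$ subsets of $J$ using the double-counting identity
\[ \binom{l-1}{q-1}\sum_{k\in J}\gamma_k \;=\; \sum_{J'\subset J,\,|J'|=q}\;\sum_{k\in J'}\gamma_k, \]
which, combined with $\binom{l}{q}/\binom{l-1}{q-1}=l/q$, yields $\sum_{k\in J}\gamma_k \geq lc$; the sign-reversed version, applied to $-i\Theta_h(L)$, furnishes $\sum_{k\in K}\gamma_k \leq -sc$ for every $|K|=s\geq n-p$, and $s = n-l$ (i.e.\ $l\leq p$) is exactly what (ii) needs. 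Substituting into the pointwise formulae above delivers $\langle [i\Theta_h(L),\Lambda_\omega]\alpha,\alpha\rangle \geq lc\,|\alpha|^2$ in case (i) and $\geq (n-l)c\,|\alpha|^2$ in case (ii), and the integrated BKN identity then forces $\alpha\equiv 0$, proving both vanishing statements. There is no serious analytic obstacle beyond checking the pointwise curvature formulae above in the two special bidegrees and securing the combinatorial monotonicity; together they are what adapts the classical Kodaira--Nakano argument to the weaker $m$-positivity setting.
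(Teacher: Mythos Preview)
Your proof is correct and follows essentially the same route as the paper: Hodge theory plus the K\"ahler Bochner--Kodaira--Nakano identity, together with the pointwise eigenvalue computation of $[i\Theta_h(L),\Lambda_\omega]$ in bidegrees $(n,l)$ and $(l,0)$. The only cosmetic difference is in the monotonicity step (passing from $q$-sums to $l$-sums of eigenvalues): the paper invokes Lemma~\ref{Lem:weaker_m-increases} (wedging with $\omega$ preserves strong positivity) to deduce $i\Theta_h(L)\geq_{l,\omega}c\,\omega$ from $i\Theta_h(L)\geq_{q,\omega}c\,\omega$, whereas you obtain the same conclusion by your double-counting average over $q$-subsets---both arguments are elementary and yield the identical bounds $lc$ and $(n-l)c$.
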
  

Similar results are obtained in Theorem \ref{The:vanishing_m-pos_Hermitian} when the K\"ahler assumption on the Hermitian metric $\omega$ of $X$ is dropped. The price to pay in this case is the replacement of $L$ by sufficiently high powers $L^k$ in order to ensure the cohomology vanishing.

These results are classical (the Kodaira vanishing theorem in bidegree $(n,\,q)$ and the more general Akizuki-Nakano vanishing theorem in bidegree $(p,\,q)$ with $p+q\geq n+1$ or $p+q\leq n-1$ -- see e.g. [Dem97, VII - 3.3.]) in the case of $1$-positive or $1$-negative line bundles (i.e. when $i\Theta_h(L)\geq c\,\omega$ or $i\Theta_h(L)\leq -c\,\omega$). Meanwhile, the classical Andreotti-Grauert vanishing theorem (see e.g. [Dem97, VII - 5.1.]) holds under a partial positivity assumption on the line bundle involved. Theorems \ref{The:vanishing_m-pos_Kaehler_introd} and \ref{The:vanishing_m-pos_Kaehler} translate these classical results into the $m$-positivity language used in this paper.

\vspace{1ex}

(b)\, {\bf Resolution and $L^2$-estimates on the solutions of the $\bar\partial$-equation for $L$-valued forms} of the following type (see Theorem \ref{The:L2-estimates_m-pos_compact-Kaehler} for a more precise statement):

\begin{The}\label{The:L2-estimates_m-pos_compact-Kaehler_introd} (i)\, In bidegree $(n,\,\cdot\,)$, under the curvature $q$-positivity assumption $i\Theta_h(L)\geq_{q,\,\omega} c\,\omega$ for some $q\in\{1,\dots , n\}$ and some constant $c>0$, we have: for every $l\geq q$ and every $v\in C^\infty_{n,\,l}(X,\,L)$ such that $\bar\partial v=0$, there exists $u\in C^\infty_{n,\,l-1}(X,\,L)$ such that $\bar\partial u = v$ and \begin{eqnarray*}\label{eqn:L2-estimate_compact-K-nq}\int\limits_X|u|^2_{\omega,\,h}dV_\omega\leq\frac{1}{c\,l}\,\int\limits_X|v|^2_{\omega,\,h}dV_\omega.\end{eqnarray*}

\vspace{1ex}

(ii)\, In bidegree $(0,\,\cdot\,)$, under the curvature $(n-q)$-negativity assumption $i\Theta_h(L)\leq_{n-q,\,\omega} -c\,\omega$ for some $q\in\{1,\dots , n-1\}$ and some constant $c>0$, we have: for every $l\leq q$ and every $v\in C^\infty_{0,\,l}(X,\,L)$ such that $\bar\partial v=0$, there exists $u\in C^\infty_{0,\,l-1}(X,\,L)$ such that $\bar\partial u = v$ and \begin{eqnarray*}\label{eqn:L2-estimate_compact-K-0q}\int\limits_X|u|^2_{\omega,\,h}dV_\omega\leq\frac{1}{c(n-l)}\,\int\limits_X|v|^2_{\omega,\,h}dV_\omega.\end{eqnarray*}

\end{The}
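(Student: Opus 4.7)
The plan is to deduce the $L^2$-estimates from the Bochner-Kodaira-Nakano (BKN) identity for $L$-valued forms on the compact K\"ahler manifold $(X,\omega)$, combined with a pointwise lower bound on the curvature operator $[i\Theta_h(L),\Lambda_\omega]$ furnished by the $m$-(semi-)positivity assumption. The resulting bound then feeds into the standard Hodge-theoretic inversion of $\bar\partial$, yielding both the existence of a smooth solution and the explicit constant.

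\textbf{Part (i).} I would first invoke Lemma \ref{Lem:m-pos_eigenvalues} to translate the hypothesis $i\Theta_h(L)\geq_{q,\omega}c\,\omega$ into an eigenvalue inequality: at every point $x\in X$, if $\lambda_1\leq\cdots\leq\lambda_n$ are the eigenvalues of $i\Theta_h(L)$ with respect to $\omega$, then $\lambda_1+\cdots+\lambda_q\geq c\,q$. Because the $\lambda_j$ are ordered increasingly, this forces $\lambda_q\geq c$, hence $\lambda_j\geq c$ for every $j\geq q$, and consequently, for each $K\subset\{1,\dots,n\}$ with $|K|=l\geq q$, $\sum_{j\in K}\lambda_j\geq\lambda_1+\cdots+\lambda_l\geq c\,l$. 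In a local coframe diagonalising $i\Theta_h(L)$ with respect to $\omega$, the curvature operator acts on an $(n,\,l)$-form $u=\sum_{|K|=l}u_K\,dz_1\wedge\cdots\wedge dz_n\wedge d\bar z_K$ via $\langle[i\Theta_h(L),\Lambda_\omega]u,u\rangle=\sum_{|K|=l}\bigl(\sum_{j\in K}\lambda_j\bigr)|u_K|^2$, yielding the pointwise bound $\langle[i\Theta_h(L),\Lambda_\omega]u,u\rangle_{\omega,h}\geq c\,l\,|u|^2_{\omega,h}$. I would then use the K\"ahler BKN identity $\Delta''=\Delta'+[i\Theta_h(L),\Lambda_\omega]$: pairing $\Delta''u=0$ with $u$ immediately rules out non-zero harmonic $(n,\,l)$-forms with values in $L$, so the Green's operator $G$ of $\Delta''$ is a two-sided inverse on $C^\infty_{n,\,l}(X,L)$. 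For $v\in C^\infty_{n,\,l}(X,L)$ with $\bar\partial v=0$, set $u:=\bar\partial^*Gv\in C^\infty_{n,\,l-1}(X,L)$; since $G$ commutes with $\bar\partial$ and $\bar\partial v=0$, one has $\bar\partial^*\bar\partial Gv=0$, hence $\bar\partial u=\bar\partial\bar\partial^*Gv=\Delta''Gv=v$. The identity $\|u\|^2=\langle Gv,v\rangle$ combined with the spectral bound $G\leq(c\,l)^{-1}\Id$ (inherited from $\Delta''\geq c\,l\cdot\Id$, since $\Delta'\geq0$) then gives $\int_X|u|^2_{\omega,h}\,dV_\omega\leq(c\,l)^{-1}\int_X|v|^2_{\omega,h}\,dV_\omega$.

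\textbf{Part (ii).} The same strategy applies in the dual bidegree. On $(0,\,l)$-forms, a diagonal computation gives $\langle[i\Theta_h(L),\Lambda_\omega]u,u\rangle=-\sum_{|K|=l}\bigl(\sum_{j\notin K}\lambda_j\bigr)|u_K|^2$, so one now needs an upper bound on eigenvalue sums taken over complements of cardinality $n-l$. Applying Lemma \ref{Lem:m-pos_eigenvalues} to $-i\Theta_h(L)$, the hypothesis $i\Theta_h(L)\leq_{n-q,\omega}-c\,\omega$ reads $\lambda_{q+1}+\cdots+\lambda_n\leq-c(n-q)$; the monotonicity argument of Part~(i), applied to the top end of the spectrum, gives $\lambda_{q+1}\leq-c$ and hence $\lambda_j\leq-c$ for every $j\leq q+1$. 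For $l\leq q$ and any $|K|=l$, the complement $K^c$ has $n-l\geq n-q$ elements, and $\sum_{j\notin K}\lambda_j\leq\lambda_{l+1}+\cdots+\lambda_n\leq-c(q-l)-c(n-q)=-c(n-l)$, so $\langle[i\Theta_h(L),\Lambda_\omega]u,u\rangle_{\omega,h}\geq c(n-l)\,|u|^2_{\omega,h}$ pointwise on $(0,\,l)$-forms. The same BKN-plus-Hodge-theory machinery then produces $u:=\bar\partial^*Gv\in C^\infty_{0,\,l-1}(X,L)$ with $\bar\partial u=v$ and $\int_X|u|^2_{\omega,h}\,dV_\omega\leq\bigl(c(n-l)\bigr)^{-1}\int_X|v|^2_{\omega,h}\,dV_\omega$.

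The only substantive step is the eigenvalue bookkeeping that converts the $m$-(semi-)positivity hypothesis into the pointwise curvature-operator bound with the precise constant $c\,l$ (resp. $c(n-l)$); once this is in place, the K\"ahler BKN identity and the standard Hodge-theoretic inversion of $\bar\partial$ on a compact manifold close out the argument with exactly the stated constants.
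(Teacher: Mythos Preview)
Your proposal is correct and follows essentially the same route as the paper: the pointwise lower bound on the curvature operator $[i\Theta_h(L),\Lambda_\omega]$ in the relevant bidegrees (this is the content of Lemma~\ref{Lem:l-bounds_curv-op}), then the K\"ahler BKN identity $\Delta''=\Delta'+[i\Theta_h(L),\Lambda_\omega]$, and finally the Neumann solution $u=\bar\partial^\star\Delta^{''-1}v$ together with $\|u\|^2=\langle\langle\Delta^{''-1}v,v\rangle\rangle\leq (c\,l)^{-1}\|v\|^2$ (resp.\ $(c(n-l))^{-1}\|v\|^2$). The only cosmetic difference is that you derive the bound $\sum_{j\in K}\lambda_j\geq c\,l$ for $|K|=l\geq q$ by extracting the individual inequality $\lambda_q\geq c$, whereas the paper obtains it directly from Lemma~\ref{Lem:weaker_m-increases} (i.e.\ $i\Theta_h(L)\geq_{q,\omega}c\,\omega\Rightarrow i\Theta_h(L)\geq_{l,\omega}c\,\omega$); the two arguments are equivalent.
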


Similar results are obtained in Theorem \ref{The:L2-estimates_m-pos_complete-Kaehler} when the complex manifold $X$ is not necessarily compact, but the K\"ahler metric $\omega$ thereon is {\it complete}. The right-hand-side term $v$ and the solution $u$ of the equation $\bar\partial u = v$ are, in general, only $L^2$ in this case and the proof is different from the one given in the compact case.

As with the above vanishing theorems under (a), these results are classical (H\"ormander's $L^2$ estimates and Demailly's extension of them to complete K\"ahler manifolds  -- see e.g. [Dem97, VIII - 4.5.]) in the case of $1$-positive line bundles. Generalisations to the case of partially positive line bundles exist too (see e.g. [Dem97, VIII - 6.5.]). Our Theorems \ref{The:L2-estimates_m-pos_compact-Kaehler_introd} and \ref{The:L2-estimates_m-pos_compact-Kaehler} further generalise and translate these classical results into the $m$-positivity language used in this paper.

\vspace{2ex}

Secondly, we give in $\S$\ref{section:regularisation} regularisation theorems for $m$-semi-positive $(1,1)$-currents $T$ on complex Hermitian manifolds $(X,\,\omega)$. These are key in reducing many problems involving, for example, {\it singular} Hermitian holomorphic line bundles and their curvature currents, to their counterparts (such as those dealt with in $\S$\ref{section:m-semi-pos}) in the context of $C^\infty$ Hermitian fibre metrics.

Our results extend to $m$-semi-positive $(1,1)$-currents and $m$-psh upper semi-continuous functions classical regularisation results known in the case $m=1$. The proofs in the case where $m\geq 2$ treated in this paper are very different from those of their classical counterparts for the case $m=1$.

\vspace{1ex}

In $\S$\ref{subsection:global-regularisation}, we solve the global regularisation problem on a compact complex Hermitian manifold $(X,\,\omega)$ under the assumption that the Bott-Chern cohomology class of the $d$-closed $m$-semi-positive $(1,\,1)$-current $T$, that we wish to realise as the weak limit of a sequence of $T$-cohomologous $C^\infty$ $m$-semi-positive $(1,\,1)$-forms, contains an $m$-positive $C^\infty$ representative $\chi$. When $m=1$, this hypothesis means that the class of $T$ contains a K\"ahler metric. Thus, our result generalises to the case $m\geq 2$ the result of B\l{}ocki and Ko\l{}odziej in [BK07] stipulating that a semi-positive $(1,\,1)$-current $T$, whose Bott-Chern cohomology class is K\"ahler, can be approximated by cohomologous $C^\infty$ $(1,\,1)$-forms without the loss of positivity (depending on the Lelong numbers of $T$) inevitable in the case of general cohomology classes covered by Demailly's current-regularisation theorem of [Dem92].

Our global regularisation result is the following (see Theorem \ref{The:global-approximation} for a more precise statement):

\begin{The}\label{The:global-approximation_introd} Under the circumstances described above, for any upper semi-continuous $L^1_{loc}$ function $\varphi:X\longrightarrow\R\cup\{-\infty\}$ such that $\chi + i\partial\bar\partial\varphi\geq_{m,\,\omega}0$ on $X$, there exists a sequence $(\varphi_j)_{j\geq 1}$ of $C^\infty$ functions $\varphi_j:X\longrightarrow\R$ such that:

\vspace{1ex}

(i)\, $\chi + i\partial\bar\partial\varphi_j\geq_{m,\,\omega}0$ on $X$ for every $j\geq 1$;

\vspace{1ex}

(ii)\, for every $x\in X$, the sequence $(\varphi_j(x))_{j\geq 1}$ of reals is non-increasing and converges to $\varphi(x)$ as $j\to\infty$.

\end{The}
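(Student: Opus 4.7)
The plan is to combine a truncation reduction, a local convolution-type regularisation, and a global Perron-envelope construction steered by the viscosity theory of a degenerate complex $m$-Hessian equation on $(X,\omega)$, with the strict $m$-positivity of $\chi$ absorbing the small losses of positivity introduced at each step. Since $X$ is compact and $\varphi$ is upper semi-continuous, $\varphi$ is bounded above; after normalising I may assume $\varphi \le 0$. The truncations $\varphi^{(k)} := \max(\varphi, -k)$ remain $m$-psh with respect to $\chi$, since constants are $m$-psh with respect to the strictly $m$-positive form $\chi$ and pointwise maxima of $m$-psh functions are $m$-psh by Harvey-Lawson. As $\varphi^{(k)}\searrow\varphi$ pointwise, a diagonal argument reduces the statement to the bounded case, which I assume from now on.

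I next cover $X$ by finitely many coordinate charts $U_\alpha$ on which $\chi = i\partial\bar\partial\rho_\alpha$ for smooth local potentials, and on which $\omega$ is comparable to a constant-coefficient Hermitian form $\omega_\alpha^0$. Then $\rho_\alpha + \varphi$ is $m$-psh with respect to $\omega$ on $U_\alpha$, and convolving with a standard mollifier of radius $\epsilon$ yields smooth approximants $\varphi_\epsilon^{(\alpha)}\searrow\varphi$. Unlike the classical case $m=1$, $m$-plurisubharmonicity depends on the metric, so convolution does not preserve it automatically; however, the discrepancy between $\omega$ and $\omega_\alpha^0$ on an $\epsilon$-ball is $O(\epsilon)$, and the resulting loss of positivity is absorbed by the strict $m$-positivity of $\chi$, which is uniformly bounded below by $c\,\omega$ in the $m$-sense for some $c>0$.

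To globalise, I define, for each $j\ge 1$, the upper envelope
\[ \varphi_j(x) := \Bigl( \sup\bigl\{\psi(x) : \psi \text{ usc on } X,\ \chi + i\partial\bar\partial\psi \ge_{m,\omega} 0,\ \psi \le \varphi + 1/j \bigr\} \Bigr)^*, \]
where $^*$ denotes upper semi-continuous regularisation. The Harvey-Lawson theory ensures that the usc regularisation of a supremum of $m$-psh functions (with respect to $\chi$) is again $m$-psh, so $\varphi_j$ itself is $m$-psh with respect to $\chi$; the local approximants of the previous step show the admissible family is rich enough to guarantee $\varphi_j \searrow \varphi$. To upgrade $\varphi_j$ to a $C^\infty$ function, I view it as the maximal viscosity subsolution of the degenerate $m$-Monge-Amp\`ere-type equation $(\chi + i\partial\bar\partial\psi)^m \wedge \omega^{n-m} = 0$ subject to the obstacle $\psi \le \varphi + 1/j$, and on small coordinate balls I replace it by a smooth solution of the corresponding Dirichlet problem; the strict $m$-positivity of $\chi$ makes this local problem solvable with smooth, strictly $m$-positive solutions after an arbitrarily small perturbation. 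The main obstacle will be this final smoothness upgrade: producing $C^\infty$ representatives from the continuous envelope requires the regularity theory of the complex $m$-Hessian equation on a Hermitian (not necessarily K\"ahler) background, together with careful control of the Dirichlet data consistently with the global Bott-Chern class of $\chi$; this is precisely where the viscosity subsolution framework and the associated Dirichlet problem mentioned in the abstract enter in an essential way.
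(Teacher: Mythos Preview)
Your outline has two genuine gaps, and both are precisely at the places the paper singles out as the real difficulties.

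\textbf{The convolution step.} You assert that after mollifying on a chart the ``loss of positivity is $O(\epsilon)$'' and can be absorbed by the strict $m$-positivity of $\chi$. This is not correct for merely $m$-psh potentials. The error between testing against $\omega(z)^{m-1}$ and against the frozen $\omega(z-w)^{m-1}$ is an $O(\epsilon)$ smooth $(m-1,m-1)$-form, but it is wedged against the current $i\partial\bar\partial\varphi$, whose coefficients are distributions with no a priori bound; only the combination $(\chi+i\partial\bar\partial\varphi)\wedge\omega^{m-1}$ is a (positive) measure, not $i\partial\bar\partial\varphi$ itself. So the error is not dominated by a fixed multiple of $\omega^m$, and the margin $c\omega$ coming from $\chi$ does not absorb it. The paper explicitly flags this: the convolution/heat-flow trick works for \emph{strictly} $m$-psh functions (Verbitsky), but ``for mere $m$-plurisubharmonic functions, this argument fails.''

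\textbf{The envelope-to-smooth upgrade.} Defining $\varphi_j$ as the usc envelope of $\{\psi\in P_m:\psi\le\varphi+1/j\}$ gives an $m$-psh function sandwiched between $\varphi$ and $\varphi+1/j$, but nothing more. You propose to make it smooth by locally replacing it with Dirichlet solutions of an $m$-Hessian equation. This does not yield a global $C^\infty$ function: the envelope is not known to be even continuous in this setting (the paper remarks that the potential theory allowing continuous envelopes, available for the operator in [LN15] via Berman's formalism, is \emph{not} available for $F_m$), the Dirichlet solution with merely usc boundary data is not smooth up to the boundary, and patches on overlapping balls do not glue to anything regular. The sentence ``this is precisely where the viscosity subsolution framework \dots\ enters'' is a pointer, not an argument.

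\textbf{What the paper actually does.} The approach is orthogonal to yours: no envelopes at all. One takes an arbitrary smooth sequence $f_j\searrow\varphi$ (not $m$-psh), and for large $\beta$ solves on $X$ the global equation
\[
F_m[\chi+i\partial\bar\partial u_j^\beta]=e^{\beta(u_j^\beta-f_j)}F_m^j+\tfrac{1}{2\beta}F_m[\chi],
\]
whose smooth solvability is the main theorem of [ChX25] (this is where the strict $m$-positivity of $\chi$ is used). A maximum-principle argument gives $u_j^\beta\le f_j$, and a one-line viscosity lemma (Lemma~3.3: $F_m[\chi+i\partial\bar\partial((1-\tfrac1\beta)\varphi+\tfrac1\beta\rho)]\ge\tfrac1\beta F_m[\chi+i\partial\bar\partial\rho]$) gives $(1-\tfrac1\beta)\varphi\le u_j^\beta+O(\tfrac{\log\beta}{\beta})$. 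A diagonal extraction then produces the decreasing smooth sequence. The smooth approximants come directly out of a global PDE, not from regularising an envelope.
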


\vspace{1ex}

In $\S$\ref{subsection:local-regularisation}, we solve the local regularisation problem on an open subset $U_p\subset X$ (neighbourhood of an arbitrary given point $p\in X$) of a complex manifold $X$. In the classical $m=1$ case, this problem has a well-known solution: every psh function $\varphi\not\equiv -\infty$ on a connected open subset $\Omega\Subset\C^n$ is the pointwise limit $\varphi\star\rho_\varepsilon\searrow\varphi$ (as $\varepsilon\searrow 0$) on $\Omega_\varepsilon:=\{z\in\Omega\,\mid\,d(z,\,\partial\Omega)>\varepsilon\}\Subset\Omega$ of the $C^\infty$ psh functions obtained by convoluting $\varphi$ with a family $(\rho_\varepsilon)_\varepsilon$ of smoothing kernels. However, in the case where $m\geq 2$, the $C^\infty$ functions $\varphi\star\rho_\varepsilon$ need not be $m$-psh w.r.t. a given Hermitian metric $\omega$ when $\varphi$ is, except in the very special case where the coefficients of $\omega$ are all constant. The constancy of the coefficients of $\omega$ cannot be ensured in local coordinates on a complex manifold since it is not invariant under coordinate changes. So, a completely different method had to be introduced to handle the case $m\geq 2$.

Our local regularisation result is the following (cf. Theorem \ref{The:local-approximation}):

\begin{The}\label{The:local-approximation_introd} Let $(X,\,\omega)$ be a complex Hermitian manifold. Then, for every point $p\in X$, every open neighbourhood $U_p\subset X$ of $p$ and every $m$-psh (with respect to $\omega$) function $u:U_p\longrightarrow\R\cup\{-\infty\}$, there exists a sequence $(u_j)_{j\geq 1}$ of $C^\infty$ strictly $m$-psh (with respect to $\omega$) functions $u_j:U_p\longrightarrow\R$ such that the sequence $(u_j(x))_{j\geq 1}$ of reals is non-increasing and converges to $u(x)$ for every $x\in U_p$.
\end{The}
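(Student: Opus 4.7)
The strategy is to construct the regularisations locally, on small coordinate balls, via a Dirichlet problem for a Monge--Amp\`ere-type equation whose viscosity subsolution theory respects $m$-positivity, and then to patch the resulting local smooth strictly $m$-psh models globally using Richberg's regularised maximum. This avoids convolving $u$ directly, which, as the authors have just emphasised, fails to preserve $m$-pshness when the coefficients of $\omega$ are not constant. A preliminary truncation $u^{(k)}:=\max(u,-k)$ produces a non-increasing sequence of locally bounded $m$-psh functions with $u^{(k)}\searrow u$, so that a diagonal argument reduces the theorem to constructing, for every locally bounded $m$-psh $v$ on $U_p$, every compact $K\Subset U_p$ and every $\delta>0$, a $C^\infty$ strictly $m$-psh $w:U_p\to\R$ with $v\le w\le v+\delta$ on $K$.

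For the local step, fix $q\in U_p$, a small coordinate ball $B_q\Subset U_p$ on which $\omega$ is close to $\omega(q)$, and a strictly $m$-psh barrier $\rho_q$ on a neighbourhood of $\overline{B_q}$ (e.g.\ $A|z-q|^2$ for $A$ large, in coordinates where $\omega(q)$ is Euclidean). For each $\varepsilon>0$, I consider the Perron envelope
\[
\phi_{q,\varepsilon}(z) := \bigl(\sup\{\psi(z) : \psi \in m\text{-psh}(B_q),\ \psi \le v+\varepsilon\rho_q \text{ on } B_q\}\bigr)^*,
\]
which is $m$-psh, satisfies $v\le\phi_{q,\varepsilon}\le v+\varepsilon\rho_q$ on $B_q$, and converges uniformly to $v$ as $\varepsilon\searrow 0$. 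The viscosity and variational theory for Monge--Amp\`ere-type envelopes (Harvey--Lawson, Pli\'s, Berman--Demailly, Dinew and others) yields $C^{1,1}$-regularity of $\phi_{q,\varepsilon}$ on a slightly smaller ball; a second step, solving on that smaller ball the \emph{non-degenerate} complex $m$-Hessian Dirichlet problem $(i\partial\bar\partial\widetilde\phi)^m\wedge\omega^{n-m}=\varepsilon\,\omega^n$ with boundary trace $\phi_{q,\varepsilon}+\varepsilon\rho_q$, upgrades the regularity to $C^\infty$ via the interior regularity theory for non-degenerate $m$-Hessian equations in the Hermitian setting. The resulting $\widetilde\phi_{q,\varepsilon}$ is a $C^\infty$ strictly $m$-psh upper approximation of $v$ satisfying $v\le\widetilde\phi_{q,\varepsilon}\le v+C\varepsilon$, with $C$ depending only on the geometry of $\overline{B_q}$.

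For the global step, cover $U_p$ by a locally finite family of concentric pairs $B_\alpha\Subset B'_\alpha\subset U_p$ with their associated local smooth models $\widetilde\phi_\alpha$. Richberg's regularised maximum glues finitely many $C^\infty$ strictly $m$-psh functions into a single $C^\infty$ strictly $m$-psh function on $U_p$, invoking only the facts that the ordinary maximum preserves $m$-pshness and that the regularised maximum is $C^\infty$ and coincides with the ordinary maximum wherever one entry strictly dominates by more than the regularisation parameter. Choosing the $\varepsilon_\alpha$ and the regularisation parameters small enough yields $u_j$ with $\|u_j-v\|_{L^\infty(K)}\le 2^{-j}$ for any prescribed compact $K$, and a diagonal extraction over the truncation level $k$ and the index $j$ completes the proof. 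The hard part is the local step---in particular the $C^{1,1}$-to-$C^\infty$ upgrade for the envelope $\phi_{q,\varepsilon}$ in the variable-coefficient Hermitian setting---which is precisely where the viscosity theory for the Monge--Amp\`ere-type equation mentioned in the abstract is indispensable.
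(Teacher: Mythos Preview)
Your proposal has a genuine gap: you invoke the wrong equation. The operator $(i\partial\bar\partial\widetilde\phi)^m\wedge\omega^{n-m}$ is the complex $m$-Hessian associated with the \emph{B\l{}ocki--Dinew--Ko\l{}odziej} notion of $m$-subharmonicity (admissible cone $\Gamma_m$, i.e.\ $\sigma_1,\dots,\sigma_m>0$). The paper's $m$-psh class is the \emph{Dieu--Harvey--Lawson} one: $i\partial\bar\partial u\wedge\omega^{m-1}\ge 0$ strongly, equivalently the sum of any $m$ eigenvalues is $\ge 0$. These cones are distinct; for instance $(\lambda_1,\lambda_2,\lambda_3)=(-1,1,1)$ lies in the paper's $2$-psh cone but has $\sigma_2=-1<0$. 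Thus a solution of your Dirichlet problem need not be $m$-psh in the sense required, and conversely the boundary data you feed in (built from the paper's $m$-psh envelope) need not lie in the admissible class for your equation. The correct nonlinear operator here is the $F_m$ of Definition~\ref{Def:F_m} (product of all $m$-fold eigenvalue sums), and its Dirichlet theory (Theorem~\ref{Thm:Dirichlet-Fm}) is what the paper develops precisely because no off-the-shelf Hessian result applies.

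A second gap is the claimed $C^{1,1}$ regularity of the Perron envelope $\phi_{q,\varepsilon}$: the results you cite (Pli\'s, Berman--Demailly) are for psh or B\l{}ocki-type $m$-subharmonic envelopes, not for the Harvey--Lawson cone with a variable Hermitian $\omega$. No such envelope regularity is available here, which is exactly why the paper bypasses envelopes altogether. Instead, the paper takes an arbitrary smooth (not $m$-psh) sequence $f_j\searrow u$, solves on a small ball the $F_m$-Dirichlet problem
\[
F_m[i\partial\bar\partial u_j^\beta]=e^{\beta(u_j^\beta-f_j)}+\tfrac{1}{2\beta},\qquad u_j^\beta|_{\partial U_p}=f_j|_{\partial U_p},
\]
and uses a maximum-principle argument for the upper bound $u_j^\beta\le f_j+O(1/\beta)$ together with a viscosity comparison (Lemma~\ref{Lem:local-visc-subsolutions}, testing $u+\tfrac{C}{\beta}\|z\|^2$ against $u_j^\beta$) for the lower bound $u\le u_j^\beta+O(\tfrac{\log\beta}{\beta})$. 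A subsequence extraction then gives the decreasing approximation. No envelope regularity, no Richberg gluing, and---crucially---no appeal to the $m$-Hessian equation are needed.
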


\vspace{2ex}

In the proofs of both our global and local regularisation theorems, we use the theory of viscosity subsolutions to a certain Monge-Amp\`ere-type equation that was solved by Cheng and Xu in [ChX25]. In both cases, we need to consider a Dirichlet problem associated with that equation. The resolution of this Dirichlet problem is quite technically involved in the case of local regularisations (cf. Theorem \ref{Thm:Dirichlet-Fm}). In spite of a host of existing references for similar problems, none seems to state and solve the precise Dirichlet problem we need. Therefore, we provided a complete proof in the Appendix ($\S$\ref{section:Appendix}). 

	\vspace{1ex}

{\bf Acknowledgments}. The first-named author would like to thank N. C. Nguyen for illuminating discussions on [GN18]. He was partially supported by grant no. 2021/41/B/ST1/01632 from the National Science Center, Poland.

\section{$m$-semi-positivity of $(1,\,1)$-currents and line bundles}\label{section:m-semi-pos}

Definition \ref{Def:m-semi-pos} is generalised in the expected way to the situation where two real $(1,\,1)$-currents $T,S$ are involved or where $m$-(semi-)negativity is meant: \begin{eqnarray*}T\geq_{m,\,\omega}S \hspace{3ex} (\mbox{resp.} \hspace{2ex} T>_{m,\,\omega}S) & \iff & T-S\geq_{m,\,\omega}0 \hspace{3ex} (\mbox{resp.} \hspace{2ex} T-S>_{m,\,\omega}0) \\
 T\leq_{m,\,\omega}0 \hspace{3ex} (\mbox{resp.} \hspace{2ex} T<_{m,\,\omega}0) & \iff &  -T\geq_{m,\,\omega}0 \hspace{3ex} (\mbox{resp.} \hspace{2ex} -T>_{m,\,\omega}0).\end{eqnarray*}

The following observation is implicit in [HL13].

\begin{Lem}\label{Lem:m-pos_eigenvalues} If $T$ is continuous in the setting of Definition \ref{Def:m-semi-pos}, the following equivalence holds: \begin{eqnarray}\label{eqn:m-pos_eigenvalues}T\geq_{m,\,\omega}0 \iff \lambda_1 + \dots + \lambda_m\geq 0 \hspace{3ex}\mbox{at every point of}\hspace{1ex} X,\end{eqnarray} where $\lambda_1\leq\dots\leq\lambda_m\leq\dots\leq\lambda_n$ are the eigenvalues of $T$ with respect to $\omega$.

\end{Lem}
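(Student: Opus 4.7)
The plan is to reduce the current-theoretic condition to a pointwise statement, exploiting continuity: since $T\wedge\omega^{m-1}$ is then a continuous $(m,m)$-form, it is strongly semi-positive as a current iff it is strongly semi-positive pointwise as a form. So fix $x\in X$. By simultaneous diagonalisation of the positive Hermitian form $\omega_x$ and the Hermitian form $T_x$ on $T^{1,0}_xX$, pick a basis $dz_1,\dots,dz_n$ of $(T^{1,0}_xX)^*$ in which
\[
\omega_x = i\sum_{j=1}^n dz_j\wedge d\bar z_j,\qquad T_x = i\sum_{j=1}^n \lambda_j\, dz_j\wedge d\bar z_j,
\]
with $\lambda_1\le\dots\le\lambda_n$ the eigenvalues of $T$ with respect to $\omega$. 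Writing $\beta_J:=\prod_{j\in J} i\,dz_j\wedge d\bar z_j$ for $J\subset\{1,\dots,n\}$, a direct expansion (using that $(idz_j\wedge d\bar z_j)^2=0$) gives
\[
T_x\wedge\omega_x^{m-1} \;=\; (m-1)!\!\!\sum_{|J|=m}\Bigl(\sum_{j\in J}\lambda_j\Bigr)\beta_J.
\]

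The crux is then the linear-algebra claim that a real diagonal $(m,m)$-form $\eta=\sum_{|J|=m} c_J\beta_J$ is strongly semi-positive iff $c_J\ge 0$ for every $J$. The "if" direction is immediate: each $\beta_J$ is a wedge of $m$ strongly positive $(1,1)$-forms, hence strongly positive, and the strongly semi-positive cone is convex. For "only if", strong positivity implies weak positivity; but for any fixed $J_0$, wedging $\eta$ with the strongly positive complementary $(n-m,n-m)$-form $\beta_{J_0^c}$ kills every term except $J=J_0$ and returns $c_{J_0}\,\beta_{J_0}\wedge\beta_{J_0^c}$, a (non-zero) positive multiple of the volume form, so $c_{J_0}\ge 0$.

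Combining the two, $T_x\wedge\omega_x^{m-1}$ is strongly semi-positive precisely when $\sum_{j\in J}\lambda_j\ge 0$ for every subset $J$ of size $m$. Since, among these $\binom{n}{m}$ partial sums, the minimum is attained by the indices of the $m$ smallest eigenvalues, the system is equivalent to the single inequality $\lambda_1+\dots+\lambda_m\ge 0$. Varying $x$ over $X$ and invoking continuity completes the equivalence.

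The only non-routine ingredient I anticipate is the converse in the diagonal linear-algebra lemma, which rests on the weak/strong positivity duality and is resolved by the complementary-pairing argument above; everything else is bookkeeping once the simultaneous diagonalisation is in place.
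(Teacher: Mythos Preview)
Your proof is correct and follows essentially the same route as the paper: diagonalise $\omega$ and $T$ simultaneously at a point, expand $T\wedge\omega^{m-1}$ as $\sum_{|J|=m}\bigl(\sum_{j\in J}\lambda_j\bigr)\beta_J$, and reduce strong semi-positivity to non-negativity of all coefficients. You are in fact more explicit than the paper on two points---the continuity reduction from currents to pointwise forms, and the ``only if'' direction of the diagonal linear-algebra lemma via complementary pairing---whereas the paper simply asserts the equivalence between strong semi-positivity and non-negativity of the coefficients once the form is diagonal.
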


Recall that, whenever $T$ is of class $C^2$, for every point $x\in X$, there exist local holomorphic coordinates $z_1,\dots , z_n$ on $X$ centred at $x$ such that \begin{eqnarray}\label{eqn:coordinates_diagonalisation}\omega(x) = \sum\limits_{j=1}^n idz_j\wedge d\bar{z}_j  \hspace{5ex}\mbox{and}\hspace{5ex}  T(x) = \sum\limits_{j=1}^n \lambda_j(x)\,idz_j\wedge d\bar{z}_j.\end{eqnarray} The functions $\lambda_j:X\longrightarrow\R$ are independent of the choices of local holomorphic coordinates $z_1,\dots , z_n$ simultaneously diagonalising $\omega$ and $T$ as above. They are the eigenvalues of $T$ with respect to $\omega$. After possibly permuting the coordinates, we may order them non-decreasingly. Lemma \ref{Lem:m-pos_eigenvalues} ensures that the $m$-semi-positivity condition on $T$ is equivalent to the sum of any $m$ of its eigenvalues being non-negative at every point of $X$.

\vspace{2ex}

\noindent {\it Proof of Lemma \ref{Lem:m-pos_eigenvalues}.} We fix an arbitrary point $X\in X$ and choose local holomorphic coordinates $z_1,\dots , z_n$ centred at $x$ with property (\ref{eqn:coordinates_diagonalisation}). We get: \begin{eqnarray*}\omega_{m-1}(x) = \frac{\omega^{m-1}(x)}{(m-1)!} = \sum\limits_{1\leq k_1<\dots<k_{m-1}\leq n} idz_{k_1}\wedge d\bar{z}_{k_1}\wedge\dots\wedge idz_{k_{m-1}}\wedge d\bar{z}_{k_{m-1}}\end{eqnarray*}
\begin{eqnarray*} = \sum\limits_{|K|=m-1}i^{(m-1)^2}dz_K\wedge d\bar{z}_K,\end{eqnarray*} where $K=(1\leq k_1<\dots<k_{m-1}\leq n)$ are multi-indices of length $m-1$ and $dz_K:=dz_{k_1}\wedge\dots\wedge dz_{k_{m-1}}$.

Hence: \begin{eqnarray*}T\wedge\omega_{m-1}(x) =  \sum\limits_{j=1}^n \sum\limits_{\stackrel{|K|=m-1}{j\notin K}}i^{m^2}\lambda_j(x)\,dz_{jK}\wedge d\bar{z}_{jK} = \sum\limits_{|J|=m}\bigg(\sum\limits_{j\in J}\lambda_j\bigg)\,i^{m^2} dz_J\wedge d\bar{z}_J,\end{eqnarray*} where $jK$ is the multi-index of length $m$ obtained by adding $j$ to $K$ and re-ordering the indices increasingly if need be. Since each $i^{m^2} dz_J\wedge d\bar{z}_J = idz_{j_1}\wedge d\bar{z}_{j_1}\wedge\dots\wedge idz_{j_m}\wedge d\bar{z}_{j_m}$ is a decomposable $(m,\,m)$-form, we conclude that the strong semi-positivity of $T\wedge\omega_{m-1}(x)$ is equivalent to the non-negativity of each of its coefficients: $\sum_{j\in J}\lambda_j\geq 0$ for every $J$ with $|J|=m$.  \hfill $\Box$

\vspace{2ex}

Another immediate observation is that $m$-(semi-)positivity and $m$-(semi-)negativity can only become weaker properties as $m$ increases.

\begin{Lem}\label{Lem:weaker_m-increases} In the setting of Definition \ref{Def:m-semi-pos}, the following implications hold: \begin{eqnarray*}T\geq_{m,\,\omega}0 \hspace{3ex} (\mbox{resp.} \hspace{2ex} T>_{m,\,\omega}0) & \implies & T\geq_{m+1,\,\omega}0 \hspace{3ex} (\mbox{resp.} \hspace{2ex} T>_{m+1,\,\omega}0) \\
 T\leq_{m,\,\omega}0 \hspace{3ex} (\mbox{resp.} \hspace{2ex} T<_{m,\,\omega}0) & \implies & T\leq_{m+1,\,\omega}0 \hspace{3ex} (\mbox{resp.} \hspace{2ex} T<_{m+1,\,\omega}0).\end{eqnarray*}

\end{Lem}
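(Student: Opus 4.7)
\noindent\emph{Proof plan for Lemma \ref{Lem:weaker_m-increases}.} The strategy is to rewrite the defining product for $(m+1)$-(semi-)positivity as the defining product for $m$-(semi-)positivity wedged with the extra factor $\omega$, and then to invoke the fact that wedging with $\omega$ preserves strong (semi-)positivity. Concretely, one writes
\begin{equation*}
T\wedge\omega^m \,=\, (T\wedge\omega^{m-1})\wedge\omega,
\end{equation*}
so that the problem is reduced to showing: if $S$ is a strongly semi-positive (resp. strongly positive) $(m,m)$-current and $\omega$ is a Hermitian metric, then $S\wedge\omega$ is a strongly semi-positive (resp. strongly positive) $(m+1,m+1)$-current.

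The first step is to observe that $\omega$, as a Hermitian metric, is itself strongly positive as a smooth $(1,1)$-form: at each point $x\in X$, one can diagonalise $\omega$ by an orthonormal coframe of $(1,0)$-forms $\alpha_1,\dots,\alpha_n$ and obtain $\omega(x)=\sum_{j=1}^n i\alpha_j\wedge\bar\alpha_j$, which is a positive combination of decomposable strongly positive $(1,1)$-forms. The second step is the standard fact from the calculus of positive forms/currents that the wedge product of a strongly semi-positive current by a smooth strongly positive form of complementary bidegree is a strongly semi-positive current (this is the pointwise analogue of the fact that a positive combination of decomposables is strongly positive, combined with the definition of strong semi-positivity of a current as non-negative pairing with strongly positive test forms). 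Applying this with $S=T\wedge\omega^{m-1}$, which is strongly semi-positive (resp. strongly positive) precisely by the hypothesis $T\geq_{m,\,\omega}0$ (resp. $T>_{m,\,\omega}0$), delivers the desired conclusion $T\geq_{m+1,\,\omega}0$ (resp. $T>_{m+1,\,\omega}0$).

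Once the positivity implication is in hand, the negativity implications are immediate by applying the positivity statement to $-T$, since by the definitional extension preceding Lemma \ref{Lem:m-pos_eigenvalues} one has $T\leq_{m,\,\omega}0\iff -T\geq_{m,\,\omega}0$, and similarly for the strict versions. There is essentially no obstacle in the argument; the only point worth pinning down carefully is the preservation of strong (semi-)positivity under wedging with $\omega$, which is standard and, moreover, could alternatively be verified pointwise at a diagonalising frame as in (\ref{eqn:coordinates_diagonalisation}) in the continuous case via Lemma \ref{Lem:m-pos_eigenvalues}: ordering the eigenvalues $\lambda_1\leq\cdots\leq\lambda_n$, the hypothesis $\lambda_1+\cdots+\lambda_m\geq 0$ forces $\lambda_{m+1}\geq 0$ (otherwise $\lambda_1,\dots,\lambda_m\leq\lambda_{m+1}<0$, contradicting the hypothesis), whence $\lambda_1+\cdots+\lambda_{m+1}\geq 0$ and one concludes by the eigenvalue characterisation. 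The current-level argument via wedge products, however, handles the general (possibly non-continuous) case uniformly and is the one I would feature.
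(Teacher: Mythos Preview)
Your argument is correct and follows essentially the same route as the paper: both reduce to the observation that $T\wedge\omega^m=(T\wedge\omega^{m-1})\wedge\omega$ and that wedging a strongly (semi-)positive current with the strongly positive form $\omega$ preserves strong (semi-)positivity, with the negativity cases handled by passing to $-T$. Your additional eigenvalue remark in the continuous case is a nice sanity check but is not needed, as you yourself note.
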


\begin{proof} If the $(m,\,m)$-current $T\wedge\omega^{m-1}$ is strongly semi-positive (resp. strongly positive), it remains so after multiplication by any strongly positive form, in particular by $\omega$.

 The analogous statement holds when the word ``positive'' is replaced by the word ``negative''.

\end{proof}  

Let us recall the following standard fact: every function $\varphi$ that is $m$-psh with respect to a Hermitian metric $\omega$ is also {\it subharmonic} with respect to $\omega$ in the sense that $\Delta_\omega\varphi\geq 0$, where $\Delta_\omega\varphi:=\Lambda_\omega(i\partial\bar\partial\varphi)$. This follows at once from Lemma \ref{Lem:weaker_m-increases} and from the well-known equality $(\Lambda_\omega\alpha)\,\omega_n = \alpha\wedge\omega_{n-1}$ that holds for every $(1,\,1)$-form $\alpha$, where we set $\alpha_p:=\alpha^p/p!$ for any positive integer $p$.

\subsection{A consequence of the BKN identity}\label{subsection:BKN-consequence} We start by recalling some standard material that will be needed. Let $(E,\,h)\longrightarrow (X,\,\omega)$ be a Hermitian holomorphic vector bundle of rank $r\geq 1$ over a complex Hermitian manifold of dimension $n$. Denoting by $D_h=D_h' + \bar\partial$ the Chern connection of $(E,\,h)$, by $\Lambda_\omega := (\omega\wedge\cdot\,)^\star$ the adjoint w.r.t. the pointwise inner product $\langle\,\cdot\,,\,\cdot\,\rangle_\omega$ induced by $\omega$ on the differential forms on $X$ and by $\tau=\tau_\omega = [\Lambda_\omega,\,\partial\omega\wedge\cdot\,]$ the torsion operator induced by $\omega$, one has the following

\vspace{2ex}

\noindent {\bf Bochner-Kodaira-Nakano-type identity (Demailly [Dem84])} \begin{eqnarray}\label{eqn:BKN_identity}\Delta'' = \Delta'_\tau + [i\Theta_h(E)\wedge\cdot\,,\,\Lambda_\omega] + T_{\omega},\end{eqnarray} where \begin{eqnarray*}\Delta'_\tau &:= & (D'_h + \tau_\omega)\,(D'_h + \tau_\omega)^\star_{\omega,\,h} + (D'_h + \tau_\omega)^\star_{\omega,\,h}\,(D'_h + \tau_\omega): C^\infty_{p,\,q}(X,\,L)\longrightarrow C^\infty_{p,\,q}(X,\,L), \\
  \Delta'' & := & \bar\partial\bar\partial^\star_{\omega,\,h} + \bar\partial^\star_{\omega,\,h}\bar\partial : C^\infty_{p,\,q}(X,\,L)\longrightarrow C^\infty_{p,\,q}(X,\,L), \\
  T_{\omega}  & :=  & \bigg[\Lambda_\omega,\,[\Lambda_\omega,\,\frac{i}{2}\,\partial\bar\partial\omega]\bigg] - [\partial\omega\wedge\cdot,\,(\partial\omega\wedge\cdot)^{\star}]: C^\infty_{p,\,q}(X,\,L)\longrightarrow C^\infty_{p,\,q}(X,\,L), \\
  i\Theta_h(E)\wedge\cdot & : & C^\infty_{p,\,q}(X,\,L)\longrightarrow C^\infty_{p+1,\,q+1}(X,\,L),\end{eqnarray*} are, respectively, the Laplace-type operators induced by the type-$(1,\,0)$-operator $D'_h + \tau_\omega$ and the type-$(0,\,1)$-operator $\bar\partial$; a zero-th order operator of type $(0,\,0)$ associated with the torsion of $\omega$; the operator of multiplication by the curvature form $i\Theta_h(E)\in C^\infty(X,\,\mbox{End}\,(E))$ of $D_h$, all acting on $L$-valued forms of any bidegree $(p,\,q)$ on $X$.

\vspace{2ex}

Expressions of the type $[i\Theta_h(E)\wedge\cdot\,,\,\Lambda_\omega]$ featuring on the right of (\ref{eqn:BKN_identity}) can be handled in an effective way by means of the following well-known formula:

\begin{Lem}([Dem97, VI. $\S5.2.$])\label{Lem:curvature-operator_coordinates} For any Hermitian metric $\omega$ and any real $(1,\,1)$-form $T$, for every point $x\in X$ about which local holomorphic coordinates $z_1,\dots , z_n$ have been chosen to simultaneously diagonalise $\omega$ and $T$ as in (\ref{eqn:coordinates_diagonalisation}) and for every form $u=\sum\limits_{J,\,K}u_{J\overline{K}}\,dz_J\wedge d\bar{z}_K$, one has: \begin{eqnarray}\label{eqn:curvature-operator_coordinates}[T\wedge\cdot\,,\,\Lambda_\omega]\,u = \sum\limits_{J,\,K}\bigg(\sum\limits_{j\in J}\lambda_j + \sum\limits_{k\in K}\lambda_k - \sum\limits_{l=1}^n\lambda_l\bigg)\,u_{J\overline{K}}\,dz_J\wedge d\bar{z}_K    \hspace{7ex}\mbox{at}\hspace{1ex} x.\end{eqnarray}

\end{Lem}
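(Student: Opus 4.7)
Both sides of (\ref{eqn:curvature-operator_coordinates}) are pointwise operators (the commutator $[T\wedge\cdot,\Lambda_\omega]$ is $C^\infty(X)$-linear because no derivatives appear) and are $\R$-linear in $T$. So the plan is to verify the identity at a fixed point $x\in X$, in the holomorphic coordinates diagonalising $\omega(x)$ and $T(x)$ as in (\ref{eqn:coordinates_diagonalisation}), and moreover only for the elementary case $T = \lambda_\alpha\,idz_\alpha\wedge d\bar z_\alpha$ with a single nonzero eigenvalue $\lambda_\alpha$; linearity in $T$ then recovers the general statement by summing over $\alpha$. For this elementary case the target simplifies to
\begin{equation*}
[T\wedge\cdot\,,\,\Lambda_\omega]\,u \;=\; \lambda_\alpha\sum_{J,K}\bigl(\mathbf{1}_{\alpha\in J} + \mathbf{1}_{\alpha\in K} - 1\bigr)\,u_{J\bar K}\,dz_J\wedge d\bar z_K,
\end{equation*}
since in that case $\sum_{j\in J}\lambda_j = \lambda_\alpha\mathbf{1}_{\alpha\in J}$, $\sum_{k\in K}\lambda_k = \lambda_\alpha\mathbf{1}_{\alpha\in K}$, and $\sum_l\lambda_l = \lambda_\alpha$.

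To carry out this reduced verification I would adopt the standard Clifford/creation--annihilation formalism. Introduce the exterior multiplication operators $e_j := idz_j\wedge\cdot$ and $\bar e_j := d\bar z_j\wedge\cdot$ together with their pointwise adjoints $e_j^\star,\bar e_j^\star$ with respect to $\omega(x)$. Orthonormality of the diagonalising frame gives the canonical anticommutation relations $\{e_j,e_k^\star\} = \delta_{jk}$, $\{\bar e_j,\bar e_k^\star\} = \delta_{jk}$, with all remaining anticommutators among these generators vanishing. In this formalism $\omega(x)\wedge\cdot = \sum_l e_l\bar e_l$, and hence, by taking adjoints, $\Lambda_\omega = \sum_l \bar e_l^\star\,e_l^\star$; moreover $T\wedge\cdot = \lambda_\alpha\,e_\alpha\bar e_\alpha$.

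The computation then reduces to expanding $\sum_l\bigl[e_\alpha\bar e_\alpha,\,\bar e_l^\star e_l^\star\bigr]$ by the graded Leibniz rule and the anticommutation relations. The key algebraic point is that the contributions with $l\ne\alpha$ cancel in pairs, while the $l=\alpha$ term collapses to $N_\alpha + \bar N_\alpha - \Id$, where the number operators $N_\alpha := e_\alpha e_\alpha^\star$ and $\bar N_\alpha := \bar e_\alpha\bar e_\alpha^\star$ act diagonally on $dz_J\wedge d\bar z_K$ as multiplication by $\mathbf{1}_{\alpha\in J}$ and $\mathbf{1}_{\alpha\in K}$ respectively. This exactly matches the claimed eigenvalue.

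The only genuine obstacle is bookkeeping: managing the sign conventions in the Clifford-type anticommutation relations and verifying the pairwise cancellation when $l\ne\alpha$. A more elementary alternative, which I would use as a cross-check, is to fix a basis monomial $dz_J\wedge d\bar z_K$ and compute $T\wedge\Lambda_\omega$ and $\Lambda_\omega\,T\wedge$ on it by hand, distinguishing the four cases according to whether $\alpha\in J\cap K$, $\alpha\in J\setminus K$, $\alpha\in K\setminus J$, or $\alpha\notin J\cup K$; the difference collapses to $\mathbf{1}_{\alpha\in J} + \mathbf{1}_{\alpha\in K} - 1$ in each case.
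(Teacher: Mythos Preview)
The paper does not supply its own proof of this lemma: it is quoted verbatim from [Dem97, VI.\ \S5.2.] and used as a black box. Your argument via the creation/annihilation operators $e_j,\bar e_j,e_j^\star,\bar e_j^\star$ is correct and is in fact the standard proof (essentially the one in Demailly's book): the reduction to a single $\alpha$ by linearity in $T$, the vanishing of the $l\neq\alpha$ commutators because $e_\alpha\bar e_\alpha$ commutes with $\bar e_l^\star e_l^\star$ (two anticommutations cancel), and the identity $[e_\alpha\bar e_\alpha,\bar e_\alpha^\star e_\alpha^\star]=e_\alpha e_\alpha^\star-\bar e_\alpha^\star\bar e_\alpha=N_\alpha+\bar N_\alpha-\Id$ are all correct.
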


\vspace{2ex}

We now observe the following consequence of this fact.

\begin{Lem}\label{Lem:l-bounds_curv-op} Let $(L,\,h)\longrightarrow (X,\,\omega)$ be a holomorphic line bundle over a compact complex Hermitian manifold with $\mbox{dim}_\C X = n$, where $h$ is a $C^\infty$ Hermitian fibre metric. For any $(p,\,q)$, let \begin{eqnarray*}A_{\omega,\,h}:=[i\Theta_h(L)\wedge\cdot\,,\,\Lambda_\omega]:\Lambda^{p,\,q}T^\star X\otimes L\longrightarrow\Lambda^{p,\,q}T^\star X\otimes L\end{eqnarray*} be the curvature operator associated with $(L,\,h)$ and acting in bidegree $(p,\,q)$.

\vspace{1ex}

$(1)$\, For any $p\in\{0,\dots , n\}$, if there exists a constant $c>0$ such that $i\Theta_h(L)\leq_{n-p,\,\omega} -c\,\omega$ everywhere on $X$, then $A_{\omega,\,h}\geq c(n-l)\,\mbox{Id}_{\Lambda^{l,\,0}T^\star X\otimes L}$ in bidegree $(l,\,0)$, namely: \begin{eqnarray}\label{eqn:l-bound_curv-op_p0}\bigg\langle A_{\omega,\,h} u,\,u\bigg\rangle_{\omega,\,h}\geq c(n-l)\,|u|^2_{\omega,\,h},   \hspace{5ex} \mbox{for all}\hspace{2ex} u\in\Lambda^{l,\,0}T^\star_x X\otimes L_x, \hspace{1ex} x\in X,\end{eqnarray} for every $l\leq p$.

\vspace{1ex}

$(2)$\, For any $q\in\{0,\dots , n\}$, if there exists a constant $c>0$ such that $i\Theta_h(L)\leq_{n-q,\,\omega} -c\,\omega$ everywhere on $X$, then $A_{\omega,\,h}\geq c(n-l)\,\mbox{Id}_{\Lambda^{0,\,l}T^\star X\otimes L}$ in bidegree $(0,\,l)$, namely: \begin{eqnarray}\label{eqn:l-bound_curv-op_0q}\bigg\langle A_{\omega,\,h} u,\,u\bigg\rangle_{\omega,\,h}\geq c(n-l)\,|u|^2_{\omega,\,h},   \hspace{5ex} \mbox{for all}\hspace{2ex} u\in\Lambda^{0,\,l}T^\star_x X\otimes L_x, \hspace{1ex} x\in X,\end{eqnarray} for every $l\leq q$.

\vspace{1ex}

$(3)$\, For any $q\in\{0,\dots , n\}$, if there exists a constant $c>0$ such that $i\Theta_h(L)\geq_{q,\,\omega} c\,\omega$ everywhere on $X$, then $A_{\omega,\,h}\geq c\,l\,\mbox{Id}_{\Lambda^{n,\,l}T^\star X\otimes L}$ in bidegree $(n,\,l)$, namely: \begin{eqnarray}\label{eqn:l-bound_curv-op_nq}\bigg\langle A_{\omega,\,h} u,\,u\bigg\rangle_{\omega,\,h}\geq c\,l\,|u|^2_{\omega,\,h},   \hspace{5ex} \mbox{for all}\hspace{2ex} u\in\Lambda^{n,\,l}T^\star_x X\otimes L_x, \hspace{1ex} x\in X,\end{eqnarray} for every $l\geq q$.

\vspace{1ex}

$(4)$\, For any $p\in\{0,\dots , n\}$, if there exists a constant $c>0$ such that $i\Theta_h(L)\geq_{p,\,\omega} c\,\omega$ everywhere on $X$, then $A_{\omega,\,h}\geq c\,l\,\mbox{Id}_{\Lambda^{l,\,n}T^\star X\otimes L}$ in bidegree $(l,\,n)$, namely: \begin{eqnarray}\label{eqn:l-bound_curv-op_pn}\bigg\langle A_{\omega,\,h} u,\,u\bigg\rangle_{\omega,\,h}\geq c\,l\,|u|^2_{\omega,\,h},   \hspace{5ex} \mbox{for all}\hspace{2ex} u\in\Lambda^{l,\,n}T^\star_x X\otimes L_x, \hspace{1ex} x\in X,\end{eqnarray} for every $l\geq p$.

\end{Lem}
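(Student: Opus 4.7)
The plan is to diagonalise $i\Theta_h(L)$ and $\omega$ simultaneously at an arbitrary point $x\in X$ using coordinates as in (\ref{eqn:coordinates_diagonalisation}), and then to read off the eigenvalues of the curvature operator $A_{\omega,\,h}$ from Lemma \ref{Lem:curvature-operator_coordinates}. Writing $u=\sum_{J,\,K}u_{J\overline{K}}\,dz_J\wedge d\bar{z}_K\otimes e$ (locally, where $e$ is a frame of $L$), formula (\ref{eqn:curvature-operator_coordinates}) shows that $A_{\omega,\,h}$ is diagonal in this basis and that $\langle A_{\omega,\,h}u,\,u\rangle_{\omega,\,h}$ decomposes as a sum of non-negative multiples of the quantities $\mu_{J,\,K}:=\sum_{j\in J}\lambda_j+\sum_{k\in K}\lambda_k-\sum_{l=1}^n\lambda_l$. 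The four cases of the lemma correspond to choosing bidegrees in which either $J$ or $K$ is fixed to $\{1,\dots , n\}$, respectively to $\emptyset$, so that $\mu_{J,\,K}$ collapses to a single sum of $l$ (or $n-l$) eigenvalues.

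The key auxiliary observation, which I would isolate at the outset, is the following sharpening of Lemma \ref{Lem:weaker_m-increases}: if $i\Theta_h(L)\geq_{q,\,\omega}c\,\omega$ (resp.\ $\leq_{n-q,\,\omega}-c\,\omega$), then the analogous inequality holds with $q$ replaced by any larger integer. Applied to $i\Theta_h(L)-c\,\omega$ (resp.\ $-i\Theta_h(L)-c\,\omega$), this follows verbatim from Lemma \ref{Lem:weaker_m-increases}, and by Lemma \ref{Lem:m-pos_eigenvalues} it translates into the eigenvalue statement that, for every multi-index $I\subset\{1,\dots , n\}$ with $|I|=s\geq q$, one has $\sum_{i\in I}\lambda_i\geq c\,s$ (resp.\ $\sum_{i\in I}\lambda_i\leq -c\,s$).

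For parts $(3)$ and $(4)$, a form of bidegree $(n,\,l)$ (resp.\ $(l,\,n)$) has only the multi-index $J=\{1,\dots , n\}$ (resp.\ $K=\{1,\dots , n\}$), so $\mu_{J,\,K}=\sum_{k\in K}\lambda_k$ (resp.\ $\mu_{J,\,K}=\sum_{j\in J}\lambda_j$) with $|K|=l$ (resp.\ $|J|=l$), and the above eigenvalue statement applied with $s=l\geq q$ (resp.\ $s=l\geq p$) gives $\mu_{J,\,K}\geq c\,l$. Summing over $J$ (or $K$) and using $\sum|u_{J\overline{K}}|^2=|u|^2_{\omega,\,h}$ yields the lower bound $c\,l\,|u|^2_{\omega,\,h}$. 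For parts $(1)$ and $(2)$, $K=\emptyset$ (resp.\ $J=\emptyset$), so $\mu_{J,\,K}=-\sum_{j\notin J}\lambda_j$ (resp.\ $-\sum_{k\notin K}\lambda_k$) with the complementary multi-index having cardinality $n-l\geq n-p$ (resp.\ $n-l\geq n-q$); the negativity version of the auxiliary observation then gives $\mu_{J,\,K}\geq c\,(n-l)$, and summation finishes the argument.

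There is no real obstacle here: once the auxiliary monotonicity of $m$-positivity (already a consequence of Lemma \ref{Lem:weaker_m-increases}) is combined with the explicit diagonal form (\ref{eqn:curvature-operator_coordinates}) of $A_{\omega,\,h}$, the four cases become a matter of bookkeeping on which multi-index is full and which is empty. The only point requiring slight care is the sign convention that reconciles the $m$-semi-negativity hypothesis with the complementary-index sum $-\sum_{j\in J^c}\lambda_j$ that emerges from $\mu_{J,\,K}$ when $K=\emptyset$ (or $J=\emptyset$); this is why parts $(1)$ and $(2)$ naturally produce the factor $n-l$ rather than $l$.
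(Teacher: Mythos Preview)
Your proposal is correct and follows essentially the same approach as the paper: both arguments fix a point, choose coordinates diagonalising $\omega$ and $i\Theta_h(L)$, apply formula (\ref{eqn:curvature-operator_coordinates}) to compute $\langle A_{\omega,h}u,u\rangle$ as a sum over multi-indices weighted by the eigenvalue combinations $\mu_{J,K}$, and then use Lemma \ref{Lem:m-pos_eigenvalues} together with the monotonicity in $m$ (Lemma \ref{Lem:weaker_m-increases}) to bound these combinations. The only cosmetic difference is that you front-load the monotonicity step as an ``auxiliary observation'' to handle all $l$ at once, whereas the paper treats the extremal case $l=p$ (or $l=q$) first and then invokes Lemma \ref{Lem:weaker_m-increases} to pass to the remaining values of $l$.
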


\begin{proof} Fix an arbitrary point $x\in X$ and choose local holomorphic coordinates $z_1,\dots , z_n$ on $X$ centred at $x$ such that (\ref{eqn:coordinates_diagonalisation}) holds for $\omega$ and $T:=i\Theta_h(L)$. Let $\{e\}$ be a local frame for $L$ near $x$.

\vspace{1ex}

$(1)$\, When $q=0$, let $u\in \Lambda^{p,\,0}T^\star_x X\otimes L_x$ be arbitrary and let $u=\sum\limits_{|J|=p}u_J\,dz_J\otimes e$ be its expression in the chosen local coordinates. Thus, formula (\ref{eqn:curvature-operator_coordinates}) applied with $T:=i\Theta_h(L)$ yields at $x$: \begin{eqnarray}\label{eqn:curv-op_pointwise-l-bound_p0}\nonumber\bigg\langle A_{\omega,\,h} u,\,u\bigg\rangle_{\omega,\,h} & = & \sum\limits_{|J|=p}\bigg(\sum\limits_{j\in J}\lambda_j - \sum\limits_{l=1}^n\lambda_l\bigg)\,|u_J|^2\,|e|^2_h = -\sum\limits_{|J|=p}\bigg(\sum\limits_{j\in C_J}\lambda_j\bigg)\,|u_J|^2\,|e|^2_h \\
  & \geq & c(n-p)\,|u|_{\omega,\,h}^2,\end{eqnarray} where the inequality followed from the curvature hypothesis $i\Theta_h(L)\leq_{n-p,\,\omega} -c\,\omega$. Indeed, (\ref{eqn:coordinates_diagonalisation}) implies: \begin{eqnarray*}\bigg(i\Theta_h(L) + c\,\omega\bigg)(x) = \sum\limits_{j=1}^n \bigg(\lambda_j(x) + c\bigg)\,idz_j\wedge d\bar{z}_j,\end{eqnarray*} hence Lemma \ref{Lem:m-pos_eigenvalues} ensures that our curvature hypothesis is equivalent at $x$ to the following inequality: \begin{eqnarray*}\sum\limits_{j\in C_J}\lambda_j(x) + c\,(n-p)\leq 0 \hspace{5ex} \mbox{for all}\hspace{1ex} J \hspace{1ex} \mbox{such that}\hspace{1ex} |J|=p.\end{eqnarray*}

Since $x\in X$ is arbitrary, inequality (\ref{eqn:curv-op_pointwise-l-bound_p0}), which is the same as (\ref{eqn:l-bound_curv-op_p0}) for $l=p$, holds everywhere on $X$. When $l<p$, we have $n-l>n-p$, so Lemma \ref{Lem:weaker_m-increases} ensures that the hypothesis $i\Theta_h(L)\leq_{n-p,\,\omega} -c\,\omega$ implies $i\Theta_h(L)\leq_{n-l,\,\omega} -c\,\omega$. Thus, (\ref{eqn:curv-op_pointwise-l-bound_p0}) applied with $l$ in place of $p$ proves the claim.

\vspace{1ex}

$(2)$\, Similarly, when $p=0$ and $u=\sum\limits_{|K|=q}u_K\,d\bar{z}_K\otimes e\in \Lambda^{0,\,q}T^\star_x X\otimes L_x$, formula (\ref{eqn:curvature-operator_coordinates}) yields at $x$: \begin{eqnarray*}\bigg\langle A_{\omega,\,h} u,\,u\bigg\rangle_{\omega,\,h} & = & \sum\limits_{|K|=q}\bigg(\sum\limits_{k\in K}\lambda_k - \sum\limits_{l=1}^n\lambda_l\bigg)\,|u_K|^2\,|e|^2_h = -\sum\limits_{|K|=q}\bigg(\sum\limits_{k\in C_K}\lambda_k\bigg)\,|u_K|^2\,|e|^2_h \\
  & \geq & c(n-q)\,|u|_{\omega,\,h}^2,\end{eqnarray*} where the inequality followed from the curvature hypothesis $i\Theta_h(L)\leq_{n-q,\,\omega} -c\,\omega$. This proves (\ref{eqn:l-bound_curv-op_0q}) for $l=q$. When $l<q$, $n-l>n-q$, so Lemma \ref{Lem:weaker_m-increases} enables one to apply (\ref{eqn:l-bound_curv-op_0q}) with $l$ in place of $q$.

\vspace{1ex}

$(3)$\, When $p=n$ and $u=\sum\limits_{|K|=q}u_K\,dz_1\wedge\dots\wedge dz_n\wedge d\bar{z}_K\otimes e\in \Lambda^{n,\,q}T^\star_x X\otimes L_x$, (\ref{eqn:curvature-operator_coordinates}) yields at $x$: \begin{eqnarray*}\bigg\langle A_{\omega,\,h} u,\,u\bigg\rangle_{\omega,\,h} & = & \sum\limits_{|K|=q}\bigg(\sum\limits_{k\in K}\lambda_k\bigg)\,|u_K|^2\,|e|^2_h \geq cq\,|u|_{\omega,\,h}^2,\end{eqnarray*} where the inequality followed from the curvature hypothesis $i\Theta_h(L)\geq_{q,\,\omega} c\,\omega$. This proves (\ref{eqn:l-bound_curv-op_nq}) for $l=q$. When $l>q$, Lemma \ref{Lem:weaker_m-increases} ensures that the hypothesis $i\Theta_h(L)\geq_{q,\,\omega} c\,\omega$ implies $i\Theta_h(L)\geq_{l,\,\omega} c\,\omega$, so (\ref{eqn:l-bound_curv-op_nq}) applies with $l$ in place of $q$.

\vspace{1ex}

$(4)$\, When $q=n$ and $u=\sum\limits_{|J|=p}u_J\,dz_J\wedge d\bar{z}_1\wedge\dots\wedge d\bar{z}_n\otimes e\in \Lambda^{p,\,n}T^\star_x X\otimes L_x$, (\ref{eqn:curvature-operator_coordinates}) yields at $x$: \begin{eqnarray*}\bigg\langle A_{\omega,\,h} u,\,u\bigg\rangle_{\omega,\,h} & = & \sum\limits_{|J|=p}\bigg(\sum\limits_{j\in J}\lambda_j\bigg)\,|u_J|^2\,|e|^2_h \geq cp\,|u|_{\omega,\,h}^2,\end{eqnarray*} where the inequality followed from the curvature hypothesis $i\Theta_h(L)\geq_{p,\,\omega} c\,\omega$. This proves (\ref{eqn:l-bound_curv-op_pn}) for $l=p$. When $l>p$, Lemma \ref{Lem:weaker_m-increases} ensures that the hypothesis $i\Theta_h(L)\geq_{p,\,\omega} c\,\omega$ implies $i\Theta_h(L)\geq_{l,\,\omega} c\,\omega$, so (\ref{eqn:l-bound_curv-op_pn}) applies with $l$ in place of $p$.

\end{proof}

\subsection{Vanishing theorems for $m$-positive and $m$-negative line bundles}\label{subsection:vanishing_m-pos} The following result is our vanishing theorem in the K\"ahler setting.

\begin{The}\label{The:vanishing_m-pos_Kaehler} Let $L\longrightarrow X$ be a holomorphic line bundle over a compact complex manifold with $\mbox{dim}_\C X = n$.

  If there exist a {\bf K\"ahler} metric $\omega$ on $X$ and a $C^\infty$ Hermitian fibre metric $h$ on $L$ such that:

  \vspace{1ex}

  $(1)$\, $i\Theta_h(L)\leq_{n-p,\,\omega} -c\,\omega$ on $X$ for some $p\in\{0,\dots , n-1\}$ and some constant $c>0$, then \begin{eqnarray}\label{eqn:vanishing_m-pos_p0}H^{l,\,0}_{\bar\partial}(X,\,L) = \{0\}  \hspace{5ex} \mbox{for all} \hspace{2ex} l\leq p.\end{eqnarray}

  Under the weaker curvature assumption $i\Theta_h(L)\leq_{n-p,\,\omega} 0$ on $X$, for every $l\leq p$, every $\Gamma\in H^{l,\,0}_{\bar\partial}(X,\,L)$ has the following properties with respect to the Chern connection $D_h=D'_h + \bar\partial$ of $(L,\,h)$: \begin{eqnarray}\label{eqn:parallel_m-pos_p0}D'_h\Gamma = 0 \hspace{5ex}\mbox{and}\hspace{5ex} (D'_h)^\star\Gamma = 0.\end{eqnarray} In particular, every such $\Gamma$ is $D_h$-parallel: $D_h\Gamma = 0$.

  \vspace{1ex}

  $(2)$\, $i\Theta_h(L)\leq_{n-q,\,\omega} -c\,\omega$ on $X$ for some $q\in\{0,\dots , n-1\}$ and some constant $c>0$, then \begin{eqnarray}\label{eqn:vanishing_m-pos_0q}H^{0,\,l}_{\bar\partial}(X,\,L) = \{0\} \hspace{5ex} \mbox{for all} \hspace{2ex} l\leq q.\end{eqnarray}

  Under the weaker curvature assumption $i\Theta_h(L)\leq_{n-q,\,\omega} 0$ on $X$, for every $l\leq q$, every $\Gamma\in H^{0,\,l}_{\bar\partial}(X,\,L)$ has the property: $D'_h\Gamma = 0$ (hence also $D_h\Gamma = 0$).

  \vspace{1ex}

  $(3)$\, $i\Theta_h(L)\geq_{q,\,\omega} c\,\omega$ on $X$ for some $q\in\{1,\dots , n\}$ and some constant $c>0$, then \begin{eqnarray}\label{eqn:vanishing_m-pos_nq}H^{n,\,l}_{\bar\partial}(X,\,L) = \{0\}   \hspace{5ex} \mbox{for all} \hspace{2ex} l\geq q.\end{eqnarray}

  Under the weaker curvature assumption $i\Theta_h(L)\geq_{q,\,\omega} 0$ on $X$, for every $l\geq q$, every $\Gamma\in H^{n,\,l}_{\bar\partial}(X,\,L)$ has the properties: $D'_h\Gamma = 0$ (hence also $D_h\Gamma = 0$) and $(D'_h)^\star\Gamma = 0$.

  \vspace{1ex}

  $(4)$\, $i\Theta_h(L)\geq_{p,\,\omega} c\,\omega$ on $X$ for some $p\in\{1,\dots , n\}$ and some constant $c>0$, then \begin{eqnarray}\label{eqn:vanishing_m-pos_pn}H^{l,\,n}_{\bar\partial}(X,\,L) = \{0\} \hspace{5ex} \mbox{for all} \hspace{2ex} l\geq p.\end{eqnarray}

  Under the weaker curvature assumption $i\Theta_h(L)\geq_{p,\,\omega} 0$ on $X$, for every $l\geq p$, every $\Gamma\in H^{l,\,n}_{\bar\partial}(X,\,L)$ has the properties: $D'_h\Gamma = 0$ (hence also $D_h\Gamma = 0$) and $(D'_h)^\star\Gamma = 0$.

\end{The}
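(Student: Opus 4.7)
The plan is to combine Hodge theory with the Bochner--Kodaira--Nakano identity~(\ref{eqn:BKN_identity}) and the curvature-operator bounds of Lemma~\ref{Lem:l-bounds_curv-op}. Since $\omega$ is K\"ahler, $\partial\omega = 0$ and $\partial\bar\partial\omega = 0$, so $\tau_\omega = 0$ and $T_\omega = 0$, and (\ref{eqn:BKN_identity}) collapses to
\begin{equation*}
\Delta'' \;=\; \Delta' + A_{\omega,\,h} \qquad \mbox{on } C^\infty_{p,\,q}(X,\,L) \mbox{ for every bidegree } (p,\,q),
\end{equation*}
where $\Delta':=D'_h\,(D'_h)^\star_{\omega,\,h} + (D'_h)^\star_{\omega,\,h}\,D'_h$ and $A_{\omega,\,h}:=[i\Theta_h(L)\wedge\cdot\,,\,\Lambda_\omega]$ is the operator of Lemma~\ref{Lem:l-bounds_curv-op}. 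Since $X$ is compact, Hodge theory identifies each $H^{p,\,q}_{\bar\partial}(X,\,L)$ with $\ker\Delta''$ in bidegree $(p,\,q)$.

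Given any harmonic representative $\Gamma$ of a class in $H^{p,\,q}_{\bar\partial}(X,\,L)$, in the bidegree relevant to whichever of the four cases is being treated, I pair $\Delta''\Gamma=0$ with $\Gamma$ in the global $L^2$-inner product induced by $\omega$ and $h$ to obtain
\begin{equation*}
0 \;=\; \|D'_h\Gamma\|^2_{\omega,\,h} + \|(D'_h)^\star\Gamma\|^2_{\omega,\,h} + \int\limits_X \langle A_{\omega,\,h}\Gamma,\,\Gamma\rangle_{\omega,\,h}\,dV_\omega.
\end{equation*}
I then apply Lemma~\ref{Lem:l-bounds_curv-op}. In case~$(1)$, on bidegree $(l,\,0)$ with $l\leq p$, part~$(1)$ of the lemma gives $\langle A_{\omega,\,h}\Gamma,\,\Gamma\rangle_{\omega,\,h}\geq c(n-l)\,|\Gamma|^2_{\omega,\,h}$ pointwise; since $l\leq p\leq n-1$, the constant $c(n-l)$ is strictly positive, forcing each of the three non-negative terms in the displayed identity to vanish. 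In particular $\Gamma\equiv 0$, which proves~(\ref{eqn:vanishing_m-pos_p0}). Cases~$(2)$, $(3)$, $(4)$ are completely parallel: one invokes parts~$(2)$, $(3)$, $(4)$ of Lemma~\ref{Lem:l-bounds_curv-op} respectively, and the range constraints on $l$ in the statement ($l\leq q\leq n-1$, $l\geq q\geq 1$, $l\geq p\geq 1$) are precisely those needed to ensure that the pointwise constant $c(n-l)$ or $c\,l$ is strictly positive.

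Under the weaker hypothesis ``$\leq_{\ast,\,\omega}0$'' or ``$\geq_{\ast,\,\omega}0$'', the same proof applies with $c=0$: inspection of the proof of Lemma~\ref{Lem:l-bounds_curv-op} shows that, with $c=0$, one still obtains the pointwise inequality $\langle A_{\omega,\,h}\Gamma,\,\Gamma\rangle_{\omega,\,h}\geq 0$ in each of the four bidegrees. The three non-negative terms on the right of the displayed identity then sum to zero, so each vanishes separately, yielding $D'_h\Gamma=0$ and $(D'_h)^\star\Gamma=0$ whenever these operators are not trivially zero on the bidegree at hand. Combined with $\bar\partial\Gamma=0$ (automatic from $\Delta''\Gamma=0$ on a compact manifold), this gives $D_h\Gamma = D'_h\Gamma + \bar\partial\Gamma = 0$. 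The main substantive work has already been absorbed into Lemma~\ref{Lem:l-bounds_curv-op}; the principal obstacle here is merely the bookkeeping of pairing the correct curvature-operator bound with the correct bidegree and verifying in each of the four cases that the multiplicative constant governing the vanishing is strictly positive.
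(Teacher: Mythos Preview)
Your proof is correct and follows essentially the same route as the paper: reduce the Bochner--Kodaira--Nakano identity to $\Delta''=\Delta'+A_{\omega,h}$ via the K\"ahler hypothesis, identify cohomology with harmonic forms, pair with a harmonic representative, and feed in the pointwise bounds of Lemma~\ref{Lem:l-bounds_curv-op}. The only cosmetic difference is that you keep the $\|D'_h\Gamma\|^2+\|(D'_h)^\star\Gamma\|^2$ term explicit throughout rather than first discarding it to an inequality, which in fact streamlines the treatment of the weak ($c=0$) case.
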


\noindent {\it Proof.} Since $\omega$ is K\"ahler, the torsion operators $\tau_\omega$ and $T_\omega$ vanish identically, so the Bochner-Kodaira-Nakano formula (\ref{eqn:BKN_identity}) reduces to \begin{eqnarray}\label{eqn:BKN_identity_Kaehler}\Delta'' = \Delta' + [i\Theta_h(L)\wedge\cdot\,,\,\Lambda_\omega]\end{eqnarray} as operators acting on $L$-valued $(p,\,q)$-forms on $X$ for every bidegree $(p,\,q)$, where $\Delta':=  D'_h\,(D'_h)^\star_{\omega,\,h} + (D'_h)^\star_{\omega,\,h}\,D'_h$. Therefore, for every form $u\in C^\infty_{p,\,q}(X,\,L)$, after discarding the non-negative quantity $\langle\langle\Delta'u,\,u\rangle\rangle_{\omega,\,h}\geq 0$, we get the following well-known Bochner-Kodaira-Nakano inequality: \begin{eqnarray}\label{eqn:BKN_inequality_Kaehler}\langle\langle\Delta''u,\,u\rangle\rangle_{\omega,\,h}\geq\int\limits_X\bigg\langle[i\Theta_h(L)\wedge\cdot\,,\,\Lambda_\omega]\,u,\,u\bigg\rangle_{\omega,\,h}\,dV_\omega.\end{eqnarray}

On the other hand, since $\bar\partial^2=0$, $\Delta''$ is elliptic and $X$ is compact, the standard harmonic theory yields the Hodge isomorphism in every bidegree $(p,\,q)$: \begin{eqnarray}\label{eqn:Hodge_isom}H^{p,\,q}_{\bar\partial}(X,\,L)\simeq{\cal H}^{p,\,q}_{\Delta''}(X,\,L):=\{u\in C^\infty_{p,\,q}(X,\,L)\,\mid\,\Delta''u = 0\}.\end{eqnarray} Thus, proving the vanishing of the cohomology group $H^{p,\,q}_{\bar\partial}(X,\,L)$ amounts to proving the vanishing of the $\Delta''$-harmonic space ${\cal H}^{p,\,q}_{\Delta''}(X,\,L)$.

\vspace{1ex}

$(1)$\, When $q=0$, it suffices to prove the statements in the case where $l=p$ since Lemma \ref{Lem:weaker_m-increases} can then be used in conjunction with the case $l=p$ to yield the cases where $l<p$.

Let $\Gamma\in{\cal H}^{p,\,0}_{\Delta''}(X,\,L)$ be arbitrary. We know from $(1)$ of Lemma \ref{Lem:l-bounds_curv-op} that, under our curvature assumption, we have: \begin{eqnarray*}\bigg\langle[i\Theta_h(L)\wedge\cdot\,,\,\Lambda_\omega]\,\Gamma,\,\Gamma\bigg\rangle_{\omega,\,h}\geq c(n-p)\,|\Gamma|^2_{\omega,\,h},   \hspace{5ex} \mbox{everywhere on}\hspace{2ex} X.\end{eqnarray*}




 Integrating on $X$ with respect to $dV_\omega$ and using (\ref{eqn:BKN_inequality_Kaehler}), we get: \begin{eqnarray*}0 = \langle\langle\Delta''\Gamma,\,\Gamma\rangle\rangle\geq\int\limits_X\bigg\langle[i\Theta_h(L)\wedge\cdot\,,\,\Lambda_\omega]\,\Gamma,\,\Gamma\bigg\rangle_{\omega,\,h}\,dV_\omega\geq c\,(n-p)\,||\Gamma||_{\omega,\,h}^2\geq 0,\end{eqnarray*} where the equality followed from $\Delta''\Gamma = 0$. This implies $\Gamma = 0$. Hence, ${\cal H}^{p,\,0}_{\Delta''}(X,\,L) = \{0\}$.

If we only assume that $i\Theta_h(L)\leq_{n-p,\,\omega} 0$ everywhere on $X$ (i.e. $c=0$), we get successively: \begin{eqnarray*}\bigg\langle[i\Theta_h(L)\wedge\cdot\,,\,\Lambda_\omega]\,\Gamma,\,\Gamma\bigg\rangle_{\omega,\,h} & \geq & 0, \\
  0 = \langle\langle\Delta''\Gamma,\,\Gamma\rangle\rangle_{\omega,\,h} & \geq & \int\limits_X\bigg\langle[i\Theta_h(L)\wedge\cdot\,,\,\Lambda_\omega]\,\Gamma,\,\Gamma\bigg\rangle_{\omega,\,h}\,dV_\omega\geq  0,\end{eqnarray*} hence both inequalities on the last line above must be equalities. In view of (\ref{eqn:BKN_identity_Kaehler}) and (\ref{eqn:BKN_inequality_Kaehler}), this implies that $\langle\langle\Delta'\Gamma,\,\Gamma\rangle\rangle = 0$, which amounts to $D'_h\Gamma = 0$ and $(D'_h)^\star_{\omega,\,h}\Gamma = 0$ thanks to $X$ being compact. This proves (\ref{eqn:parallel_m-pos_p0}).

\vspace{2ex}

Parts $(2)$, $(3)$ and $(4)$ can be proved similarly.  \hfill $\Box$

\vspace{3ex}

\vspace{2ex}

We now give a variant of Theorem \ref{The:vanishing_m-pos_Kaehler} in the general context of a {\bf possibly non-K\"ahler} manifold $X$. The price to pay is that the vanishing results will hold for various cohomology groups with values in sufficiently high tensor powers $L^k$ (also denoted $kL$) of $L$, rather than in $L$ itself.

\begin{The}\label{The:vanishing_m-pos_Hermitian} Let $L\longrightarrow X$ be a holomorphic line bundle over a compact complex manifold with $\mbox{dim}_\C X = n$.

  If there exist a {\bf Hermitian} metric $\omega$ on $X$ and a $C^\infty$ Hermitian fibre metric $h$ on $L$ such that:

  \vspace{1ex}

  $(1)$\, $i\Theta_h(L)\leq_{n-p,\,\omega} -c\,\omega$ on $X$ for some $p\in\{0,\dots , n-1\}$ and some constant $c>0$, then \begin{eqnarray}\label{eqn:vanishing_m-pos_p0_nK}H^{l,\,0}_{\bar\partial}(X,\,L^k) = \{0\} \hspace{5ex} \mbox{for all} \hspace{2ex} l\leq p \hspace{2ex}\mbox{and all} \hspace{2ex} k\gg 1.\end{eqnarray}

\vspace{1ex}

$(2)$\, $i\Theta_h(L)\leq_{n-q,\,\omega} -c\,\omega$ on $X$ for some $q\in\{0,\dots , n-1\}$ and some constant $c>0$, then \begin{eqnarray}\label{eqn:vanishing_m-pos_0q_nK}H^{0,\,l}_{\bar\partial}(X,\,L^k) = \{0\} \hspace{5ex} \mbox{for all} \hspace{2ex} l\leq q \hspace{2ex}\mbox{and all} \hspace{2ex} k\gg 1.\end{eqnarray}

\vspace{1ex}

$(3)$\, $i\Theta_h(L)\geq_{q,\,\omega} c\,\omega$ on $X$ for some $q\in\{1,\dots , n\}$ and some constant $c>0$, then \begin{eqnarray}\label{eqn:vanishing_m-pos_nq_nK}H^{n,\,l}_{\bar\partial}(X,\,L^k) = \{0\} \hspace{5ex} \mbox{for all} \hspace{2ex} l\geq q \hspace{2ex}\mbox{and all} \hspace{2ex} k\gg 1.\end{eqnarray}

\vspace{1ex}

$(4)$\, $i\Theta_h(L)\geq_{p,\,\omega} c\,\omega$ on $X$ for some $p\in\{1,\dots , n\}$ and some constant $c>0$, then \begin{eqnarray}\label{eqn:vanishing_m-pos_pn_nK}H^{l,\,n}_{\bar\partial}(X,\,L^k) = \{0\} \hspace{5ex} \mbox{for all} \hspace{2ex} l\geq p \hspace{2ex}\mbox{and all} \hspace{2ex} k\gg 1.\end{eqnarray}

\end{The}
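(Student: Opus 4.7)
The proof follows the same Hodge-theoretic scheme as Theorem \ref{The:vanishing_m-pos_Kaehler}, but uses the full Bochner-Kodaira-Nakano identity (\ref{eqn:BKN_identity}) instead of its K\"ahler simplification. The crucial observation is that the torsion operators $\tau_\omega$ and $T_\omega$ appearing in (\ref{eqn:BKN_identity}) depend only on $\omega$ and are therefore independent of the bundle $(L^k,\,h^k)$, while the curvature term scales linearly in $k$ because $i\Theta_{h^k}(L^k)=k\,i\Theta_h(L)$. By compactness of $X$ one may fix a constant $C_\omega\geq 0$ such that $|\langle T_\omega u,\,u\rangle_\omega|\leq C_\omega\,|u|_\omega^2$ pointwise on $X$ for every form $u$ of any bidegree.

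I sketch the plan for case $(3)$; the other three cases are identical modulo which part of Lemma \ref{Lem:l-bounds_curv-op} is invoked. Fix $l\geq q$, equip $L^k$ with the induced metric $h^k$ and let $u\in C^\infty_{n,\,l}(X,\,L^k)$ be $\Delta''$-harmonic. Applying (\ref{eqn:BKN_identity}) to $(L^k,\,h^k)$ and pairing with $u$ yields
\begin{eqnarray*}
0 \;=\; \langle\langle\Delta'_\tau u,\,u\rangle\rangle_{\omega,\,h^k} \;+\; \int\limits_X\bigg\langle [i\Theta_{h^k}(L^k)\wedge\cdot\,,\,\Lambda_\omega]\,u,\,u\bigg\rangle_{\omega,\,h^k}\,dV_\omega \;+\; \langle\langle T_\omega u,\,u\rangle\rangle_{\omega,\,h^k}.
\end{eqnarray*}
The first summand is non-negative since $\Delta'_\tau$ is a sum of two $L^2$-squares. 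For the second, the curvature hypothesis gives $i\Theta_{h^k}(L^k)\geq_{q,\,\omega}(kc)\,\omega$, so part $(3)$ of Lemma \ref{Lem:l-bounds_curv-op} applied to $(L^k,\,h^k)$ supplies the pointwise lower bound $\langle A_{\omega,\,h^k}u,\,u\rangle_{\omega,\,h^k}\geq kcl\,|u|^2_{\omega,\,h^k}$. The third summand is bounded below by $-C_\omega\,||u||^2_{\omega,\,h^k}$. Combining,
\begin{eqnarray*}
0 \;\geq\; (kcl - C_\omega)\,||u||^2_{\omega,\,h^k},
\end{eqnarray*}
which forces $u\equiv 0$ as soon as $k>C_\omega/(cl)$. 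Since $l$ varies in the finite set $\{q,\dots,n\}$ with $q\geq 1$, a single threshold $k_0$ works simultaneously for all admissible $l$, and the Hodge isomorphism $H^{n,\,l}_{\bar\partial}(X,\,L^k)\simeq\mathcal{H}^{n,\,l}_{\Delta''}(X,\,L^k)$, valid on any compact Hermitian manifold by ellipticity of $\Delta''$, then produces (\ref{eqn:vanishing_m-pos_nq_nK}).

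Cases $(1)$, $(2)$ and $(4)$ go through in exactly the same way, invoking parts $(1)$, $(2)$ and $(4)$ of Lemma \ref{Lem:l-bounds_curv-op} respectively. The curvature lower bound coming from the lemma is $kc(n-l)$ in $(1)$ and $(2)$, strictly positive because $l\leq p\leq n-1$ forces $n-l\geq 1$, and $kcl$ in $(4)$, strictly positive because $l\geq p\geq 1$; in each situation these are enough to dominate the fixed constant $C_\omega$ for $k$ large. I do not anticipate any real obstacle: the entire non-K\"ahler difficulty is absorbed by the $k\to\infty$ scaling, and the only point requiring attention is checking that $C_\omega$ and the non-negativity of $\Delta'_\tau$ are genuinely independent of $L$, $h$ and $k$, which is immediate from the construction of $T_\omega$ and $\tau_\omega$ in (\ref{eqn:BKN_identity}).
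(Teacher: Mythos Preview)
Your proposal is correct and follows essentially the same approach as the paper: both exploit that the torsion terms $\tau_\omega$ and $T_\omega$ in Demailly's identity (\ref{eqn:BKN_identity}) are zero-th order, bounded by a constant $C_\omega$ depending only on $\omega$, and hence get dominated by the curvature term $k[i\Theta_h(L)\wedge\cdot,\,\Lambda_\omega]$ as $k\to\infty$, after which the appropriate part of Lemma \ref{Lem:l-bounds_curv-op} and the Hodge isomorphism finish the job. The only cosmetic difference is that the paper spells out case $(1)$ while you spell out case $(3)$.
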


\noindent {\it Proof.} It is similar to the proof of the K\"ahler counterpart (Theorem \ref{The:vanishing_m-pos_Kaehler}) of this result, so we will only point out the new arguments.

The key point is that the torsion operator $T_\omega$ in Demailly's Bochner-Kodaira-Nakano-type identity (\ref{eqn:BKN_identity}) is of order zero (hence bounded) and depends only on the Hermitian metric $\omega$ (hence remains unchanged when we replace the line bundle $E=L$ by $E=L^k$). Therefore, it will be made irrelevant by the curvature term $[i\Theta_h(L^k)\wedge\cdot\,,\,\Lambda_\omega]$ when $k$ is large enough.

Explicitly, since $i\Theta_h(L^k) = k\,i\Theta_h(L)$, identity (\ref{eqn:BKN_identity}), when applied to $E=L^k$, reads: \begin{eqnarray}\label{eqn:BKN_identity_Hermitian_k}\Delta_k'' = \Delta'_{\tau,\,k} + k\,[i\Theta_h(L)\wedge\cdot\,,\,\Lambda_\omega] + T_{\omega},   \hspace{5ex} k\geq 1,\end{eqnarray} where $\Delta_k''$ and $\Delta'_{\tau,\,k}$ are the counterparts of $\Delta''$ and $\Delta'_\tau$ when $(L,\,h)$ and its Chern connection are replaced by $(L^k,\,h^k)$ and its Chern connection.

Since the operator $T_\omega$ is bounded, there exists a constant $C_\omega>0$, depending only on $\omega$, such that, at every point $x\in X$, we have: \begin{eqnarray*}|\langle T_\omega u,\,u\rangle_\omega|\leq C_\omega\,|u|_\omega^2, \hspace{5ex} p,q\in\{0,\dots , n\}, \hspace{1ex} u\in\Lambda^{p,\,q}T^\star_x X.\end{eqnarray*}

Therefore, for every bidegree $(p,\,q)$ and every $\Delta_k''$-harmonic $L^k$-valued $(p,\,q)$-form $u\in{\cal H}^{p,\,q}_{\Delta_k''}(X,\,L^k)\simeq H^{p,\,q}_{\bar\partial}(X,\,L^k)$, the Bochner-Kodaira-Nakano-type identity (\ref{eqn:BKN_identity_Hermitian_k}) yields (cf. its K\"ahler counterpart (\ref{eqn:BKN_inequality_Kaehler})): \begin{eqnarray*}0& = &\langle\langle\Delta_k''u,\,u\rangle\rangle_{\omega,\,h^k} \\ & = & \langle\langle\Delta'_{\tau,\,k}u,\,u\rangle\rangle_{\omega,\,h^k} + k\,\int\limits_X\bigg\langle[i\Theta_h(L)\wedge\cdot\,,\,\Lambda_\omega]\,u,\,u\bigg\rangle_{\omega,\,h^k}\,dV_\omega + \int\limits_X\bigg\langle T_\omega\,u,\,u\bigg\rangle_{\omega,\,h^k}\,dV_\omega     \\
  & \geq & k\,\int\limits_X\bigg\langle[i\Theta_h(L)\wedge\cdot\,,\,\Lambda_\omega]\,u,\,u\bigg\rangle_{\omega,\,h^k}\,dV_\omega - C_\omega\,||u||_{\omega,\,h^k}^2.\end{eqnarray*}

\vspace{1ex}

Thus, in bidegree $(p,\,0)$, for every $u = \Gamma\in{\cal H}^{p,\,0}_{\Delta_k''}(X,\,L^k)\simeq H^{p,\,0}_{\bar\partial}(X,\,L^k)$, we get: \begin{eqnarray*}0=\langle\langle\Delta_k''\Gamma,\,\Gamma\rangle\rangle_{\omega,\,h^k} \geq \bigg(c(n-p)\,k - C_\omega  \bigg)\,||\Gamma||_{\omega,\,h^k}^2\end{eqnarray*} after using $(1)$ of Lemma \ref{Lem:l-bounds_curv-op}. Since $c(n-p)>0$, $c(n-p)\,k - C_\omega>0$ for all $k$ sufficiently large. For those $k$, we conclude that $\Gamma = 0$. This proves (\ref{eqn:vanishing_m-pos_p0_nK}) in the case $l=p$. The cases $l<p$ are deduced from this one in conjunction with Lemma \ref{Lem:weaker_m-increases}.

  Identities (\ref{eqn:vanishing_m-pos_0q_nK})--(\ref{eqn:vanishing_m-pos_pn_nK}) can be proved similarly.  \hfill $\Box$

\subsection{$L^2$-estimates for the $\bar\partial$-equation on $m$-positive and $m$-negative line bundles}\label{subsection:L2-estimates}

We shall now use the results and techniques of $\S$\ref{subsection:vanishing_m-pos} to give H\"ormander-type $L^2$-estimates in various situations. The first case is the one of $C^\infty$ forms and compact K\"ahler manifolds.

\begin{The}\label{The:L2-estimates_m-pos_compact-Kaehler} Let $L\longrightarrow X$ be a holomorphic line bundle over a {\bf compact} complex manifold with $\mbox{dim}_\C X = n$.

\vspace{1ex}

  (a)\, Suppose there exist a {\bf K\"ahler} metric $\omega$ on $X$ and a $C^\infty$ Hermitian fibre metric $h$ on $L$ such that $i\Theta_h(L)\leq_{n-q,\,\omega} -c\,\omega$ on $X$ for some $q\in\{1,\dots , n-1\}$ and some constant $c>0$.

  Then, for every $l\leq q$ and every $v\in C^\infty_{0,\,l}(X,\,L)$ such that $\bar\partial v=0$, there exists $u\in C^\infty_{0,\,l-1}(X,\,L)$ such that $\bar\partial u = v$ and \begin{eqnarray*}\label{eqn:L2-estimate_compact-K-0q}\int\limits_X|u|^2_{\omega,\,h}dV_\omega\leq\frac{1}{c(n-l)}\,\int\limits_X|v|^2_{\omega,\,h}dV_\omega.\end{eqnarray*}

\vspace{1ex}

  (b)\, Suppose there exist a {\bf K\"ahler} metric $\omega$ on $X$ and a $C^\infty$ Hermitian fibre metric $h$ on $L$ such that $i\Theta_h(L)\geq_{q,\,\omega} c\,\omega$ on $X$ for some $q\in\{1,\dots , n\}$ and some constant $c>0$.

Then, for every $l\geq q$ and every $v\in C^\infty_{n,\,l}(X,\,L)$ such that $\bar\partial v=0$, there exists $u\in C^\infty_{n,\,l-1}(X,\,L)$ such that $\bar\partial u = v$ and \begin{eqnarray*}\label{eqn:L2-estimate_compact-K-nq}\int\limits_X|u|^2_{\omega,\,h}dV_\omega\leq\frac{1}{c\,l}\,\int\limits_X|v|^2_{\omega,\,h}dV_\omega.\end{eqnarray*}

\vspace{1ex}

(c)\, Suppose there exist a {\bf K\"ahler} metric $\omega$ on $X$ and a $C^\infty$ Hermitian fibre metric $h$ on $L$ such that $i\Theta_h(L)\geq_{p,\,\omega} c\,\omega$ on $X$ for some $p\in\{1,\dots , n\}$ and some constant $c>0$.

Then, for every $l\geq p$ and every $v\in C^\infty_{l,\,n}(X,\,L)$ such that $\bar\partial v=0$, there exists $u\in C^\infty_{l,\,n-1}(X,\,L)$ such that $\bar\partial u = v$ and \begin{eqnarray*}\label{eqn:L2-estimate_compact-complex-K-pn}\int\limits_X|u|^2_{\omega,\,h}dV_\omega\leq\frac{1}{c\,l}\,\int\limits_X|v|^2_{\omega,\,h}dV_\omega.\end{eqnarray*}

\end{The}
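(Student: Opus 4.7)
The proof combines the Bochner--Kodaira--Nakano (BKN) identity, simplified in the K\"ahler case, with the standard H\"ormander functional-analytic scheme, and relies crucially on the vanishing theorem (Theorem \ref{The:vanishing_m-pos_Kaehler}) and the pointwise eigenvalue estimates of Lemma \ref{Lem:l-bounds_curv-op}. Since $\omega$ is K\"ahler, the BKN identity reduces to $\Delta'' = \Delta' + A_{\omega,\,h}$ as in (\ref{eqn:BKN_identity_Kaehler}), where $A_{\omega,\,h} = [i\Theta_h(L)\wedge\cdot\,,\,\Lambda_\omega]$. Pairing with any smooth form $\alpha$ in the bidegree of $v$ and discarding the nonnegative quantity $\langle\langle\Delta'\alpha,\,\alpha\rangle\rangle_{\omega,\,h}$ yields
\begin{eqnarray*}
||\bar\partial\alpha||_{\omega,\,h}^2 + ||\bar\partial^\star\alpha||_{\omega,\,h}^2 \geq \int\limits_X\bigg\langle A_{\omega,\,h}\alpha,\,\alpha\bigg\rangle_{\omega,\,h}\,dV_\omega \geq C_0\,||\alpha||_{\omega,\,h}^2,
\end{eqnarray*}
where the uniform lower bound $C_0$ on the curvature operator is supplied by Lemma \ref{Lem:l-bounds_curv-op}: $C_0 = c(n-l)$ in bidegree $(0,\,l)$ for part (a) (from part $(2)$ of the lemma), $C_0 = c\,l$ in bidegree $(n,\,l)$ for part (b) (from part $(3)$), and $C_0 = c\,l$ in bidegree $(l,\,n)$ for part (c) (from part $(4)$).

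Theorem \ref{The:vanishing_m-pos_Kaehler} guarantees the vanishing of $H^{0,\,l}_{\bar\partial}(X,\,L)$, $H^{n,\,l}_{\bar\partial}(X,\,L)$ and $H^{l,\,n}_{\bar\partial}(X,\,L)$ in the parameter ranges of parts (a), (b), (c) respectively. Via the Hodge isomorphism (\ref{eqn:Hodge_isom}), the corresponding $\Delta''$-harmonic spaces are trivial, so the $L^2$ Hodge decomposition in the bidegree of $v$ reduces to the orthogonal direct sum $\mbox{Im}\,\bar\partial \oplus \mbox{Im}\,\bar\partial^\star$. For any smooth test form $\alpha$ in that bidegree, write $\alpha = \alpha' + \alpha''$ with $\alpha'\in\mbox{Im}\,\bar\partial$ and $\alpha''\in\mbox{Im}\,\bar\partial^\star$. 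Then $\bar\partial\alpha'=0$ and $\bar\partial^\star\alpha''=0$, hence $\bar\partial^\star\alpha = \bar\partial^\star\alpha'$. Since $\bar\partial v = 0$, the form $v$ is orthogonal to $\mbox{Im}\,\bar\partial^\star$, so $\langle\langle v,\,\alpha\rangle\rangle_{\omega,\,h} = \langle\langle v,\,\alpha'\rangle\rangle_{\omega,\,h}$. Cauchy--Schwarz combined with the BKN inequality applied to $\alpha'$ (where the $\bar\partial\alpha'$ contribution drops) yields
\begin{eqnarray*}
|\langle\langle v,\,\alpha\rangle\rangle_{\omega,\,h}|^2 \leq ||v||_{\omega,\,h}^2\,||\alpha'||_{\omega,\,h}^2 \leq \frac{1}{C_0}\,||v||_{\omega,\,h}^2\,||\bar\partial^\star\alpha||_{\omega,\,h}^2.
\end{eqnarray*}

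This estimate shows that the antilinear functional $\bar\partial^\star\alpha\mapsto\overline{\langle\langle v,\,\alpha\rangle\rangle_{\omega,\,h}}$ is well-defined and bounded on $\mbox{Im}\,\bar\partial^\star$ with norm at most $||v||_{\omega,\,h}/\sqrt{C_0}$. Extending it by zero to $(\mbox{Im}\,\bar\partial^\star)^\perp = \ker\bar\partial$ and invoking the Riesz representation theorem produces $u\in L^2$ satisfying $\bar\partial u = v$ (distributionally) and $||u||_{\omega,\,h}^2 \leq \frac{1}{C_0}\,||v||_{\omega,\,h}^2$. Choosing the minimal $L^2$ solution $u = \bar\partial^\star G v$, where $G$ is the Green operator of $\Delta''$, and applying the standard elliptic regularity of $\Delta''$ gives $u\in C^\infty$. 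The one conceptual ingredient beyond routine estimation is the vanishing of the $\Delta''$-harmonic space in the bidegree of $v$: it is precisely what reduces Hodge decomposition to $\mbox{Im}\,\bar\partial\oplus\mbox{Im}\,\bar\partial^\star$ and thereby guarantees that the H\"ormander functional $\bar\partial^\star\alpha\mapsto\langle\langle v,\,\alpha\rangle\rangle_{\omega,\,h}$ is well-defined. The three cases (a), (b), (c) differ only in the bidegree in which the BKN inequality is applied and in the numerical value of the constant $C_0$.
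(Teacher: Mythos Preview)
Your proof is correct and uses the same ingredients as the paper: the K\"ahler BKN identity, the pointwise curvature bounds of Lemma \ref{Lem:l-bounds_curv-op}, and the vanishing theorem \ref{The:vanishing_m-pos_Kaehler} to kill the harmonic space. The only difference is organisational: the paper writes down the minimal solution $u=\bar\partial^\star\Delta''^{-1}v$ via the Neumann formula first, obtains $||u||^2=\langle\langle\Delta''^{-1}v,\,v\rangle\rangle$, and then bounds this directly from the spectral inequality $\Delta''\geq C_0\,\mathrm{Id}$ supplied by BKN, whereas you route through the H\"ormander dual estimate on $\bar\partial^\star\alpha\mapsto\langle\langle v,\,\alpha\rangle\rangle$ (which is in fact the method the paper reserves for the non-compact complete case in Theorem \ref{The:L2-estimates_m-pos_complete-Kaehler}) before identifying the same minimal solution.
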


\noindent {\it Proof.} For any bidegree $(p,\,q)$, any Hermitian metric $\omega$ on $X$ and any $C^\infty$ Hermitian fibre metric $h$ on $L$, the integrability property $\bar\partial^2=0$, the ellipticity of the $\bar\partial$-Laplacian $\Delta''=\Delta''_{\omega,\,h}$ and the compactness of $X$ yield, by standard harmonic theory, the $3$-space $L^2_{\omega,\,h}$-orthogonal decomposition: \begin{eqnarray*}C^\infty_{p,\,q}(X,\,L) = {\cal H}^{p,\,q}_{\Delta''}(X,\,L)\oplus\mbox{Im}\,\bar\partial\oplus\mbox{Im}\,\bar\partial^\star, \hspace{5ex}\mbox{with}\hspace{2ex} \ker\bar\partial = {\cal H}^{p,\,q}_{\Delta''}(X,\,L)\oplus\mbox{Im}\,\bar\partial.\end{eqnarray*}

In particular, $H^{p,\,q}_{\bar\partial}(X,\,L)\simeq{\cal H}^{p,\,q}_{\Delta''}(X,\,L)$. Thus, if $H^{p,\,q}_{\bar\partial}(X,\,L) = \{0\}$, then $C^\infty_{p,\,q}(X,\,L)\cap\ker\bar\partial = C^\infty_{p,\,q}(X,\,L)\cap\mbox{Im}\,\bar\partial$, so for every $v\in C^\infty_{p,\,q}(X,\,L)\cap\ker\bar\partial$, there exists $u\in C^\infty_{p,\,q-1}(X,\,L)$ such that $v = \bar\partial u$. The minimal $L^2_{\omega,\,h}$-norm solution is given by the Neumann formula: \begin{eqnarray*}u = \bar\partial^\star\Delta^{''-1} v,\end{eqnarray*} where $\Delta^{''-1}$ is the Green operator of $\Delta''$.

From this, we get: \begin{eqnarray}\label{eqn:Neumann_L2-estimate}||u||^2 = \langle\langle\bar\partial^\star\Delta^{''-1} v,\,\bar\partial^\star\Delta^{''-1} v \rangle\rangle = \langle\langle\Delta^{''-1} v,\,\bar\partial\bar\partial^\star\Delta^{''-1} v \rangle\rangle = \langle\langle\Delta^{''-1} v,\,\Delta''\Delta^{''-1} v \rangle\rangle = \langle\langle\Delta^{''-1} v,\, v \rangle\rangle,\end{eqnarray} where the third equality follows from $\bar\partial^\star\bar\partial\Delta^{''-1} v = \bar\partial^\star\Delta^{''-1}(\bar\partial v) = 0$ since $\bar\partial v = 0$. Thus, it remains to find an upper bound for $\langle\langle\Delta^{''-1} v,\, v \rangle\rangle$ in terms of $||v||^2$ in each of the cases (a), (b), (c) described in the statement.

So far, we haven't used either the K\"ahler assumption on $X$, or the positivity/negativity assumption on the curvature of $(L,\,h)$.

\vspace{1ex}

(a)\, Thanks to Lemma \ref{Lem:weaker_m-increases}, it suffices to prove the case $l=q$. When $p=0$, $q\in\{1,\dots , n\}$, $X$ is K\"ahler and $i\Theta_h(L)\leq_{n-q,\,\omega} -c\,\omega$ on $X$ for some constant $c>0$, we get, as in the proof of Theorem \ref{The:vanishing_m-pos_Kaehler}, using $(2)$ of Remark \ref{Lem:l-bounds_curv-op}: \begin{eqnarray*}\langle\langle\Delta''v,\,v\rangle\rangle\geq\int\limits_X\bigg\langle[i\Theta_h(L)\wedge\cdot\,,\,\Lambda_\omega]\,v,\,v\bigg\rangle\,dV_\omega\geq c\,(n-q)\,||v||^2\geq 0,\end{eqnarray*} for every $v\in C^\infty_{0,\,q}(X,\,L)$. This implies the inequality below: \begin{eqnarray*}||u||^2 = \langle\langle\Delta^{''-1} v,\, v \rangle\rangle\leq\frac{1}{c\,(n-q)}\,||v||^2\end{eqnarray*} for every $v\in C^\infty_{0,\,q}(X,\,L)$, in particular for our original $v\in C^\infty_{0,\,q}(X,\,L)\cap\ker\bar\partial$, where the equality featuring the  minimal $L^2_{\omega,\,h}$-norm solution $u$ of the equation $\bar\partial u = v$ is given by (\ref{eqn:Neumann_L2-estimate}). Note that this equation is solvable (i.e. $v\in\mbox{Im}\,\bar\partial$) thanks to $H^{0,\,q}_{\bar\partial}(X,\,L) = \{0\}$ (a fact that is ensured by $(2)$ of Theorem \ref{The:vanishing_m-pos_Kaehler}) and to the arguments given above.

\vspace{1ex}

Parts (b) and (c) can be proved in a similar way using the above arguments and parts $(3)$, respectively $(4)$, of Theorem \ref{The:vanishing_m-pos_Kaehler}. \hfill $\Box$

\vspace{2ex}

The second case is the one of $L^2$-forms and more general complete K\"ahler manifolds. For example, every weakly pseudo-convex K\"ahler manifold carries a complete K\"ahler metric.

\begin{The}\label{The:L2-estimates_m-pos_complete-Kaehler}  Let $(L,\,h)\longrightarrow (X,\,\omega)$ be a holomorphic line bundle over a {\bf complete K\"ahler} manifold with $\mbox{dim}_\C X = n$ and $h$ a $C^\infty$ Hermitian fibre metric.

\vspace{1ex}

(a)\, Suppose there exist a constant $c>0$ and $q\in\{1,\dots , n\}$ such that $i\Theta_h(L)\leq_{n-q,\,\omega} -c\,\omega$ on $X$. Then, for every $l\leq q$ and every $v\in L^2_{0,\,l}(X,\,L)$ such that $\bar\partial v=0$ and $||v||^2_{\omega,\,h}<\infty$, there exists $u\in L^2_{0,\,l-1}(X,\,L)$ such that $\bar\partial u = v$ and \begin{eqnarray*}\label{eqn:L2-estimate_complete-K-0q}||u||^2_{\omega,\,h} := \int\limits_X|u|^2_{\omega,\,h}dV_\omega\leq\frac{1}{c(n-l)}\,\int\limits_X|v|^2_{\omega,\,h}dV_\omega: = \frac{1}{c(n-l)}\,||v||^2_{\omega,\,h}.\end{eqnarray*}

\vspace{1ex}

(b)\, Suppose there exist a constant $c>0$ and $q\in\{1,\dots , n\}$ such that $i\Theta_h(L)\geq_{q,\,\omega} c\,\omega$ on $X$. Then, for every $l\geq q$ and every $v\in L^2_{n,\,l}(X,\,L)$ such that $\bar\partial v=0$ and $||v||^2_{\omega,\,h}<\infty$, there exists $u\in L^2_{n,\,l-1}(X,\,L)$ such that $\bar\partial u = v$ and \begin{eqnarray*}\label{eqn:L2-estimate_complete-K-nq}||u||^2_{\omega,\,h} :=\int\limits_X|u|^2_{\omega,\,h}dV_\omega\leq\frac{1}{c\,l}\,\int\limits_X|v|^2_{\omega,\,h}dV_\omega: = \frac{1}{c\,l}\,||v||^2_{\omega,\,h}.\end{eqnarray*}

\vspace{1ex}

(c)\, Suppose there exist a constant $c>0$ and $p\in\{0,\dots , n\}$ such that $i\Theta_h(L)\geq_{p,\,\omega} c\,\omega$ on $X$. Then, for every $l\geq p$ and every $v\in L^2_{l,\,n}(X,\,L)$ such that $\bar\partial v=0$ and $||v||^2_{\omega,\,h}<\infty$, there exists $u\in L^2_{l,\,n-1}(X,\,L)$ such that $\bar\partial u = v$ and \begin{eqnarray*}\label{eqn:L2-estimate_compact-complete-K-pn}||u||^2_{\omega,\,h} :=\int\limits_X|u|^2_{\omega,\,h}dV_\omega\leq\frac{1}{c\,l}\,\int\limits_X|v|^2_{\omega,\,h}dV_\omega: = \frac{1}{c\,l}\,||v||^2_{\omega,\,h}.\end{eqnarray*}

\end{The}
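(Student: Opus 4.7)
\noindent \emph{Proof proposal.} The plan is to follow the Hörmander--Andreotti--Vesentini--Demailly scheme for $L^2$-estimates on complete Kähler manifolds, exactly as in [Dem97, VIII-\S 5], but using the $m$-positivity/negativity lower bounds of Lemma \ref{Lem:l-bounds_curv-op} in place of the usual Nakano-positivity assumption. All three parts (a), (b), (c) will be treated by the same three-step argument; only the relevant bidegree and the choice of constant $c(n-l)$ vs.\ $cl$ change.

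\emph{Step 1 (a priori estimate on compactly supported forms).} Since $\omega$ is Kähler, the Bochner--Kodaira--Nakano identity (\ref{eqn:BKN_identity_Kaehler}) applied to $(L,h)$ gives, for every compactly supported smooth $L$-valued form $u$ of the appropriate bidegree,
\begin{eqnarray*}
\|\bar\partial u\|_{\omega,h}^2 + \|\bar\partial^\star u\|_{\omega,h}^2 \;\geq\; \int_X \bigl\langle A_{\omega,h} u,\,u\bigr\rangle_{\omega,h}\,dV_\omega,
\end{eqnarray*}
where $A_{\omega,h}=[i\Theta_h(L)\wedge\cdot\,,\,\Lambda_\omega]$. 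In case (a), bidegree $(0,l)$ with $l\leq q$, part $(2)$ of Lemma \ref{Lem:l-bounds_curv-op} (combined with Lemma \ref{Lem:weaker_m-increases} to pass from $n-q$ to $n-l$) gives $\langle A_{\omega,h}u,u\rangle_{\omega,h}\geq c(n-l)\,|u|_{\omega,h}^2$ pointwise. Cases (b) and (c) give analogously $\langle A_{\omega,h}u,u\rangle_{\omega,h}\geq c\,l\,|u|_{\omega,h}^2$, via parts $(3)$ and $(4)$ of Lemma \ref{Lem:l-bounds_curv-op}. Thus one obtains
\begin{eqnarray*}
K\,\|u\|_{\omega,h}^2 \;\leq\; \|\bar\partial u\|_{\omega,h}^2 + \|\bar\partial^\star u\|_{\omega,h}^2,\qquad u\in\mathcal{D}^{p,q}(X,L),
\end{eqnarray*}
with $K=c(n-l)$ in (a) and $K=cl$ in (b), (c).

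\emph{Step 2 (extension to the maximal domain via completeness).} This is the main technical step. Because $(X,\omega)$ is complete, there exists an exhaustion $(\chi_\nu)_{\nu\geq 1}$ of smooth, compactly supported cut-off functions with $0\leq \chi_\nu\leq 1$, $\chi_\nu\nearrow 1$ pointwise and $|d\chi_\nu|_\omega\to 0$ uniformly on $X$. A standard approximation argument (see [Dem97, VIII, Lemma 3.2]) then shows that, for every $u\in L^2_{p,q}(X,L)$ lying in $\mathrm{Dom}\,\bar\partial\cap\mathrm{Dom}\,\bar\partial^\star$, the forms $\chi_\nu u$ can be regularised into compactly supported smooth forms, yielding the same inequality
\begin{eqnarray*}
K\,\|u\|_{\omega,h}^2 \;\leq\; \|\bar\partial u\|_{\omega,h}^2 + \|\bar\partial^\star u\|_{\omega,h}^2
\end{eqnarray*}
on the whole of $\mathrm{Dom}\,\bar\partial\cap\mathrm{Dom}\,\bar\partial^\star$. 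This step is where completeness is indispensable: without it the graph-norm density and the absence of boundary contributions from $[\bar\partial,\chi_\nu]$ cannot be guaranteed.

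\emph{Step 3 (Hahn--Banach/duality).} Given $v\in L^2_{p,q}(X,L)$ with $\bar\partial v=0$ and $\|v\|_{\omega,h}<\infty$, decompose any $w\in\mathrm{Dom}\,\bar\partial^\star$ orthogonally as $w=w_1+w_2$ with $w_1\in\ker\bar\partial$. Then $\langle w,v\rangle=\langle w_1,v\rangle$ and Cauchy--Schwarz plus Step 2 applied to $w_1\in\ker\bar\partial\cap\mathrm{Dom}\,\bar\partial^\star$ give
\begin{eqnarray*}
|\langle w,v\rangle|^2 \;\leq\; \|w_1\|_{\omega,h}^2\,\|v\|_{\omega,h}^2 \;\leq\; \frac{1}{K}\,\|\bar\partial^\star w_1\|_{\omega,h}^2\,\|v\|_{\omega,h}^2 \;\leq\; \frac{1}{K}\,\|\bar\partial^\star w\|_{\omega,h}^2\,\|v\|_{\omega,h}^2.
\end{eqnarray*}
Hence the linear form $\bar\partial^\star w\mapsto\langle w,v\rangle$ extends to a bounded functional of norm $\leq K^{-1/2}\|v\|_{\omega,h}$ on $L^2_{p,q-1}(X,L)$, and Riesz representation produces $u\in L^2_{p,q-1}(X,L)$ with $\bar\partial u=v$ and $\|u\|_{\omega,h}^2\leq K^{-1}\|v\|_{\omega,h}^2$.

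\emph{Main obstacle.} The substantive work is Step 2, i.e.\ using the completeness of $\omega$ (and only the completeness, not compactness or smoothness of a boundary) to push the Bochner--Kodaira--Nakano inequality from $\mathcal{D}^{p,q}(X,L)$ to the full Hilbert-space domain of $\bar\partial\cap\bar\partial^\star$. Steps 1 and 3 are then routine, and the three cases (a), (b), (c) are entirely parallel, differing only in the bidegree and in which clause of Lemma \ref{Lem:l-bounds_curv-op} supplies the pointwise curvature lower bound.
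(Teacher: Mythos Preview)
Your proposal is correct and follows essentially the same route as the paper: density of compactly supported forms in $\mathrm{Dom}\,\bar\partial\cap\mathrm{Dom}\,\bar\partial^\star$ via completeness, extension of the Bochner--Kodaira--Nakano inequality, orthogonal splitting $w=w_1+w_2$ with $w_1\in\ker\bar\partial$, and a Hahn--Banach/Riesz argument. The only cosmetic difference is that the paper applies the operator-weighted Cauchy--Schwarz inequality $|\langle v,w_1\rangle|^2\leq\langle A_{\omega,h}^{-1}v,v\rangle\,\langle A_{\omega,h}w_1,w_1\rangle$ before invoking $A_{\omega,h}\geq K\,\mathrm{Id}$, whereas you use the scalar Cauchy--Schwarz and then the a priori bound $K\|w_1\|^2\leq\|\bar\partial^\star w_1\|^2$; since here $A_{\omega,h}\geq K\,\mathrm{Id}$ pointwise, the two are equivalent.
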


\noindent {\it Proof.} For any bidegree $(p,\,q)$, any {\it complete} Hermitian metric $\omega$ on $X$ and any $C^\infty$ Hermitian fibre metric $h$ on $L$, the $\C$-vector space ${\cal D}^{p,\,q}(X,\,L)$ of $C^\infty$ compactly supported $L$-valued $(p,\,q)$-forms on $X$ is {\it dense} with respect to the graph norm $u\mapsto||u||_{\omega,\,h} + ||\bar\partial u||_{\omega,\,h} + ||\bar\partial^\star_{\omega,\,h} u||_{\omega,\,h}$ in the intersection of the domains $\mbox{Dom}\,\bar\partial\cap\mbox{Dom}\,\bar\partial^\star_{\omega,\,h}\subset L^2_{p,\,q}(X,\,L)$. This means that for every $u\in\mbox{Dom}\,\bar\partial\cap\mbox{Dom}\,\bar\partial^\star_{\omega,\,h}\subset L^2_{p,\,q}(X,\,L)$ (i.e. for every $u\in L^2_{p,\,q}(X,\,L)$ such that $\bar\partial u\in L^2_{p,\,q+1}(X,\,L)$ and $\bar\partial^\star_{\omega,\,h} u\in L^2_{p,\,q-1}(X,\,L)$ when these forms are computed by deriving the coefficients of $u$ in the sense of distributions), there exists a sequence $(u_\nu)_\nu\subset{\cal D}^{p,\,q}(X,\,L)$ such that: \begin{eqnarray*}u_\nu\underset{\nu\to\infty}{\overset{L^2}\longrightarrow}u \hspace{5ex}\mbox{and}\hspace{5ex} \bar\partial u_\nu\underset{\nu\to\infty}{\overset{L^2}\longrightarrow}\bar\partial u  \hspace{3ex}\mbox{and}\hspace{3ex} \bar\partial^\star_{\omega,\,h} u_\nu\underset{\nu\to\infty}{\overset{L^2}\longrightarrow}\bar\partial^\star_{\omega,\,h} u.\end{eqnarray*} The only reason why $\omega$ (which need not be K\"ahler at this point) has to be assumed complete is to guarantee this density property which, in turn, ensures that the Bochner-Kodaira-Nakano inequality (\ref{eqn:BKN_inequality_Kaehler}) (valid when $\omega$ is K\"ahler) extends to all the forms $u\in\mbox{Dom}\,\bar\partial\cap\mbox{Dom}\,\bar\partial^\star_{\omega,\,h}\subset L^2_{p,\,q}(X,\,L)$: \begin{eqnarray}\label{eqn:BKN_ineq_proof_d-bar}||\bar\partial u||^2_{\omega,\,h} + ||\bar\partial^\star_{\omega,\,h} u||^2_{\omega,\,h}\geq\int\limits_X\bigg\langle[i\Theta_h(L)\wedge\cdot\,,\,\Lambda_\omega]\,u,\,u\bigg\rangle_{\omega,\,h}\,dV_\omega = \int\limits_X\bigg\langle A_{\omega,\,h}\,u,\,u\bigg\rangle_{\omega,\,h}\,dV_\omega,\end{eqnarray} where $A_{\omega,\,h}$ denotes the curvature operator as in Remark \ref{Lem:l-bounds_curv-op}.

Meanwhile, we have the Hilbert space $L^2_{\omega,\,h}$-orthogonal decomposition: \begin{eqnarray}\label{eqn:2-space-decomp_perp}L^2_{p,\,q}(X,\,L) = \ker\bar\partial\oplus\bigg(\ker\bar\partial\bigg)^\perp  \hspace{5ex}\mbox{with}\hspace{5ex} \bigg(\ker\bar\partial\bigg)^\perp\subset\ker\bar\partial^\star_{\omega,\,h} \hspace{2ex}\mbox{and}\hspace{2ex} \ker\bar\partial \hspace{2ex}\mbox{closed}.\end{eqnarray}

\vspace{1ex}

(a)\, Thanks to Lemma \ref{Lem:weaker_m-increases}, it suffices to prove the case $l=q$. When $p=0$, $q\in\{1,\dots , n\}$, $X$ is K\"ahler and $i\Theta_h(L)\leq_{n-q,\,\omega} -c\,\omega$ on $X$ for some constant $c>0$, for every $w\in{\cal D}^{0,\,q}(X,\,L)$, the decomposition (\ref{eqn:2-space-decomp_perp}) yields a unique splitting: \begin{eqnarray}w = w_1 + w_2  \hspace{5ex}\mbox{with}\hspace{5ex} w_1\in\ker\bar\partial \hspace{2ex}\mbox{and}\hspace{2ex} w_2\in(\ker\bar\partial)^\perp\subset\ker\bar\partial^\star_{\omega,\,h}.\end{eqnarray}

Now, fix an arbitrary form $v\in L^2_{0,\,q}(X,\,L)$ such that $\bar\partial v=0$ and $||v||^2_{\omega,\,h}<\infty$. For every $w\in{\cal D}^{0,\,q}(X,\,L)$, we get: \begin{eqnarray*}\bigg|\langle\langle v,\,w\rangle\rangle_{\omega,\,h}\bigg|^2 = \bigg|\langle\langle v,\,w_1\rangle\rangle_{\omega,\,h}\bigg|^2 & \stackrel{(i)}{\leq} & \langle\langle A_{\omega,\,h}^{-1}v,\,v\rangle\rangle_{\omega,\,h}\,\langle\langle A_{\omega,\,h}w_1,\,w_1\rangle\rangle_{\omega,\,h} \\
  & \stackrel{(ii)}{\leq} & \langle\langle A_{\omega,\,h}^{-1}v,\,v\rangle\rangle_{\omega,\,h}\,\bigg(||\bar\partial w_1||^2_{\omega,\,h} + ||\bar\partial^\star_{\omega,\,h} w_1||^2_{\omega,\,h}\bigg),\end{eqnarray*}  where:

\vspace{1ex}

$\bullet$ inequality (i) followed from the Cauchy-Schwarz inequality applied to the curvature operator that satisfies the lower estimate $A_{\omega,\,h}\geq c(n-q)\,\mbox{Id}_{\Lambda^{0,\,q}T^\star X\otimes L} > 0$ in bidegree $(0,\,q)$ everywhere on $\Lambda^{0,\,q}T^\star X\otimes L$ thanks to our curvature hypothesis and to $(2)$ of Remark \ref{Lem:l-bounds_curv-op}; in particular, we have: \begin{eqnarray}\label{eqn:A-1_upper-bound}\langle\langle A_{\omega,\,h}^{-1}v,\,v\rangle\rangle_{\omega,\,h} \leq \frac{1}{c(n-q)}\,||v||^2_{\omega,\,h}<\infty;\end{eqnarray}

\vspace{1ex}

$\bullet$ inequality (ii) followed from the Bochner-Kodaira-Nakano inequality (\ref{eqn:BKN_ineq_proof_d-bar}) applied to $w_1\in\mbox{Dom}\,\bar\partial\cap\mbox{Dom}\,\bar\partial^\star_{\omega,\,h}\subset L^2_{0,\,q}(X,\,L)$.

\vspace{1ex}

Since $\bar\partial w_1 = 0$ and $\bar\partial^\star_{\omega,\,h} w_1 = \bar\partial^\star_{\omega,\,h} w$ (because $\bar\partial^\star_{\omega,\,h}w_2 = 0$), we get: \begin{eqnarray*}\bigg|\langle\langle w,\,v\rangle\rangle_{\omega,\,h}\bigg|^2 \leq \langle\langle A_{\omega,\,h}^{-1}v,\,v\rangle\rangle_{\omega,\,h}\,||\bar\partial^\star_{\omega,\,h} w||^2_{\omega,\,h}  \hspace{5ex}\mbox{for all}\hspace{2ex} w\in{\cal D}^{0,\,q}(X,\,L).\end{eqnarray*}

This means that the linear form:
\begin{eqnarray*}L^2_{0,\,q-1}(X,\,L) \supset\bar\partial^\star_{\omega,\,h}\bigg({\cal D}^{0,\,q}(X,\,L)\bigg)\ni\bar\partial^\star_{\omega,\,h} w\longmapsto \langle\langle w,\,v\rangle\rangle_{\omega,\,h}\in\C
\end{eqnarray*} is continuous in the $L^2_{\omega,\,h}$-norm topology and its norm is $\leq(\langle\langle A_{\omega,\,h}^{-1}v,\,v\rangle\rangle_{\omega,\,h})^{1/2}<\infty$.

  The Hahn-Banach extension theorem ensures the existence of a form $u\in L^2_{0,\,q-1}(X,\,L)$ such that \begin{eqnarray*}||u||_{\omega,\,h} \leq\bigg(\langle\langle A_{\omega,\,h}^{-1}v,\,v\rangle\rangle_{\omega,\,h}\bigg)^{1/2}\leq\frac{1}{\sqrt{c(n-q)}}\,||v||_{\omega,\,h}<\infty
  \end{eqnarray*} (see (\ref{eqn:A-1_upper-bound}) for the second inequality) and \begin{eqnarray*}\langle\langle w,\,v\rangle\rangle_{\omega,\,h} = \langle\langle\bar\partial^\star_{\omega,\,h} w,\,u\rangle\rangle_{\omega,\,h}  \hspace{5ex}\mbox{for all}\hspace{2ex} w\in{\cal D}^{0,\,q}(X,\,L).\end{eqnarray*} By the density of ${\cal D}^{0,\,q}(X,\,L)$ in $\mbox{Dom}\,\bar\partial\cap\mbox{Dom}\,\bar\partial^\star_{\omega,\,h}\subset L^2_{0,\,q}(X,\,L)$ with respect to the graph norm, this last piece of information is equivalent to $\bar\partial u = v$ in the sense of distributions. This $u$ is the sought-after solution of equation $\bar\partial u = v$ under (a).

\vspace{2ex}

Parts (b) and (c) can be proved in a similar way.  \hfill $\Box$

\section{Regularisation of $m$-semi-positive $(1,1)$-currents}\label{section:regularisation}

In subsequent applications, it will be useful to approximate $m$-semi-positive $(1,1)$-currents by smooth ones while preserving $m$-semi-positivity. Given a complex Hermitian $n$-dimensional manifold $(X,\omega)$ equipped with a $C^\infty$ $d$-closed real $(1,1)$-form $\chi$, we consider currents $T$ of the form $\chi+ i\partial\bar{\partial}\varphi$, for an upper semi-continuous locally integrable function $\varphi:X\to\R\cup\{-\infty\}$, such that $T\geq_{m,\,\omega}0$ on $X$ for some $m\in\{1,\dots , n\}$. We denote the space of all such functions $\varphi$ by $P_m(X,\omega,\chi)$. The problem then naturally reduces to an approximation of $\varphi$ by $C^\infty$ functions $\varphi_j\in P_m(X,\omega,\chi)$. As it is customary for such classes of functions, we seek a {{\it non-increasing approximation}} $\varphi_j\searrow\varphi$ as $j\to\infty$. We shall deal with both the global approximation (under suitable conditions) and its local counterpart, the latter being unobstructed, as will be seen.


\subsection{Global regularisation}\label{subsection:global-regularisation}

In this subsection we assume that $X$ is additionally compact. As $T=\chi+ i\partial\bar{\partial}\varphi$ and $\chi$ share the same Bott-Chern cohomology class, the condition $T\geq_{m,\,\omega}0$ imposes restrictions on the class $[\chi]_{BC}$. In the case $m=1$, this forces the class $[\chi]_{BC}$ to be {\bf pseudo-effective}, i.e. to contain a closed semi-positive current. However, even the stronger nef assumption on the cohomology class $[\chi]_{BC}$, let alone its mere pseudo-effectivity, does not suffice to guarantee approximation of semi-positive $(1,\,1)$-currents $T=\chi+ i\partial\bar{\partial}\varphi$ by semi-positive $C^\infty$ $(1,\,1)$-forms within their Bott-Chern cohomology class -- see [DPS94, Example 1.7] for an illuminating example. Therefore, we make the following {\bf assumption}: \begin{equation}\label{alpha-m-positive}
	\chi>_{m,\,\omega}0.
	\end{equation}

Note that in the classical case (i.e. $m=1$), this is exactly the K\"ahlerianity condition.

\vspace{1ex}

When $X$ is $\mathbb C^n$ (or a domain therein), $\chi=0$  and $\omega$ is the {\bf standard} K\"ahler metric $\omega=\sum_{k=1}^n idz_k\wedge d\bar{z}_k$, it is a classical fact that such a regularisation can be locally obtained (in the case $m=1$) through convolutions with a family of smoothing kernels. Unfortunately, similar {techniques only carry over to the case $m\geq 2$} in very special cases -- for example if $X$ is the quotient of a simple algebraic group $G$ by a parabolic subgroup $H$ and $\omega$ is fixed by the maximal compact subgroup of $G$. The problem is that the $m$-psh property depends on the metric $\omega$ when $m\geq 2$ and the translates used in the convolution, regardless of the way they are defined, require testing for $m$-positivity at a pregiven point, whereas the only information we have is at the translated point. For convolution-based approximations done in a local chart, the shifts fill in only a small ball about the base point, hence the discrepancy between $\omega^{m-1}$ computed at the base point and at the translated point is small. This simple observation accounts for {\bf strictly} $m$-plurisubharmonic functions being easy to approximate -- see [Ver10] (where the same idea is carried out by means of the heat flow). For mere $m$-plurisubharmonic functions, this argument fails. The same difficulty has been observed for other function classes on complex manifolds, see [Pli13], [LN15], [HLP16] or [ChX25]. Thus, we will have to enforce subtler methods.

\vspace{1ex}

Our approach will be modelled on the argument of Chinh and Nguyen from [LN15] who dealt with a different type of $m$-positive functions. The key difference is that in the setting of [LN15] a nonlinear potential theory can be built which, coupled with the Berman thermodynamical formalism from [Ber19], allows one to define {\it continuous} envelopes. In our setting however, the natural nonlinear operator associated with $P_m(X,\omega,\chi)$, called $F_m$ in the definition below, does not allow a simple analogous potential theory. We circumvent this difficulty by applying the {\it viscosity} machinery developed recently by Cheng and Xu -- see [ChX25]. Actually, all we shall need is the smooth solvability of the $F_m$-equation for smooth data and a simple lemma for viscosity subsolutions -- see Lemma \ref{Lem:visc-subsolutions} below.

\begin{Def}\label{Def:F_m}[ChX25, Page 3] Let $X,\ \omega,\ \chi$ be as above. For any $C^2$ function $v\in\pm$, let $\lambda_1\leq\dots\leq\lambda_n$ be the eigenvalues of the $(1,\,1)$-form $\chi+i\partial\bar{\partial}v$ with respect to $\omega$ -- see (\ref{eqn:coordinates_diagonalisation}). Then, the function $\fmv:X\longrightarrow\R$ is defined at every point $z\in X$ by the formula:

\begin{equation*}\fmv(z):=\Big(\Pi_{|J|=m}\Big[\sum_{j\in J}\lambda_j(z)\Big]\Big)^{1/\binom nm},\end{equation*} where $J=(j_1,\cdots,j_m)$ runs over all ordered $m$-tuples of indices from $\lbrace1,\cdots,n\rbrace$.

\end{Def}

As noticed in Lemma \ref{Lem:m-pos_eigenvalues} and its proof, one has: \begin{eqnarray*}(\chi+i\partial\bar{\partial}v)\wedge\omega_{m-1}(x) = \sum\limits_{|J|=m}\bigg(\sum\limits_{j\in J}\lambda_j(x)\bigg)\,i^{m^2} dz_J\wedge d\bar{z}_J\end{eqnarray*} at every point $x\in X$ about which the local coordinates $z_j$ have been chosen such that $\omega$ and $T:=\chi+i\partial\bar{\partial}v$ are simultaneously diagonal as in (\ref{eqn:coordinates_diagonalisation}). Thus, the very definition of $\pm$ implies that, for every $C^2$ function $v\in\pm$, all the sums $\sum_{j\in J}\lambda_j$ with $|J|=m$ are non negative. Moreover, one can easily check (see [ChX25] for the details) that \begin{eqnarray*}\pm\cap C^2(X,\,\R)\ni v\longmapsto\fmv\in C^0(X,\,\R)\end{eqnarray*} is a nonlinear {\bf elliptic} operator. This enables one to introduce the {\bf viscosity machinery} for $F_m$ (compare [ChX25, Definition 2.3]):

\begin{Def}\label{Def:viscosity_subsolution}[ChX25, Definition 2.3] Let $g$ be a non-negative continuous function on $X$. A function $\varphi\in\pm$ is said to be a {\bf viscosity subsolution} of the equation \begin{eqnarray*}\fmv=g\end{eqnarray*} if for any point $p\in X$ and any $C^2$ function $\tau:U\to\R$ defined in a neighbourhood $U$ of $p$ such that $\tau\geq \varphi$ on $U$ and $\tau(p) = \varphi(p)$ (any such function $\tau$ is called a {\bf testing function for $\varphi$ at $p$}), one has $F_m[\chi+i\partial\bar{\partial}\tau](p)\geq g(p)$.

    In this case, we write $\fmp\geq g$ in the viscosity sense.

\end{Def}

We shall need the following simple fact:

\begin{Lem}\label{Lem:visc-subsolutions} Let $\varphi\in\pm$ and $\rho\in\pm\cap C^2(X,\,\R)$. Then, for any $\beta>1$ we have: \begin{eqnarray*}F_m\bigg[\chi+i\partial\bar{\partial}\Big((1-\frac1{\beta})\varphi+\frac1{\beta}\rho\Big)\bigg]\geq \frac1{\beta}F_m[\chi+i\partial\bar{\partial}\rho]\end{eqnarray*} in the viscosity sense.

\end{Lem}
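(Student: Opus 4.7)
The plan is to convert any $C^2$ testing function for $\psi:=(1-\tfrac{1}{\beta})\varphi+\tfrac{1}{\beta}\rho$ at a point $p\in X$ into a $C^2$ testing function for $\varphi$ at $p$, and then exploit G\aa{}rding-type concavity of $F_m$ on the cone of $m$-positive $(1,1)$-forms.

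Concretely, given a $C^2$ testing function $\tau:U\to\R$ for $\psi$ at $p$, I would set
\[
\tilde\tau\;:=\;\frac{\beta\tau-\rho}{\beta-1},
\]
so that $\tau=(1-\tfrac{1}{\beta})\tilde\tau+\tfrac{1}{\beta}\rho$. Substituting this identity into the inequalities $\tau\geq\psi$ on $U$ and $\tau(p)=\psi(p)$ yields the equivalent statements $\tilde\tau\geq\varphi$ on $U$ and $\tilde\tau(p)=\varphi(p)$, so $\tilde\tau$ is a $C^2$ testing function for $\varphi\in P_m(X,\omega,\chi)$ at $p$. Invoking the standard equivalence between membership in $P_m(X,\omega,\chi)$ and being a viscosity subsolution of the trivial equation $F_m[\,\cdot\,]\geq 0$ (see [HL13] or [ChX25]), the $(1,1)$-form $\chi+i\partial\bar\partial\tilde\tau$ must be $m$-positive at $p$, so in particular $F_m[\chi+i\partial\bar\partial\tilde\tau](p)\geq 0$.

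Applying $i\partial\bar\partial$ to the identity $\tau=(1-\tfrac{1}{\beta})\tilde\tau+\tfrac{1}{\beta}\rho$ and decomposing $\chi=(1-\tfrac{1}{\beta})\chi+\tfrac{1}{\beta}\chi$ gives, at $p$,
\[
\chi+i\partial\bar\partial\tau\;=\;\bigl(1-\tfrac{1}{\beta}\bigr)\bigl(\chi+i\partial\bar\partial\tilde\tau\bigr)\;+\;\tfrac{1}{\beta}\bigl(\chi+i\partial\bar\partial\rho\bigr).
\]
Both summands are $m$-positive at $p$ (the first by the preceding paragraph, the second because $\rho\in P_m(X,\omega,\chi)\cap C^2$), and so is their sum. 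Now $F_m$ is by construction the $\binom{n}{m}$-th root of the G\aa{}rding-hyperbolic polynomial $\prod_{|J|=m}(\sum_{j\in J}\lambda_j)$, hence (by G\aa{}rding's theorem, combined with the Davis--Lewis transfer from symmetric spectral functions to Hermitian matrices) concave and positively homogeneous of degree one, and therefore superadditive on the $m$-positive cone. Applied to the decomposition above, this yields
\[
F_m[\chi+i\partial\bar\partial\tau](p)\;\geq\;\bigl(1-\tfrac{1}{\beta}\bigr)F_m[\chi+i\partial\bar\partial\tilde\tau](p)\;+\;\tfrac{1}{\beta}F_m[\chi+i\partial\bar\partial\rho](p)\;\geq\;\tfrac{1}{\beta}F_m[\chi+i\partial\bar\partial\rho](p),
\]
which is the required viscosity inequality since $p$ and $\tau$ were arbitrary.

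The main obstacle I anticipate is the equivalence cited in the middle paragraph, namely that every $\varphi\in P_m(X,\omega,\chi)$ is automatically a viscosity subsolution of $F_m[\,\cdot\,]\geq 0$. On a general Hermitian manifold, where $m$-positivity genuinely depends on $\omega$ and convolution regularisations do not preserve the class (the very obstruction motivating $\S$\ref{subsection:global-regularisation}), this is less immediate than in the classical $m=1$ case. I would cite it from [HL13]/[ChX25]; a direct proof proceeds via a local-chart perturbation argument, adding $\varepsilon|z-p|^2$ to the testing function and invoking the distributional $m$-positivity of $\chi+i\partial\bar\partial\varphi$ on a small ball before letting $\varepsilon\to 0^+$.
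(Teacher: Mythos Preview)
Your proof is correct and follows essentially the same approach as the paper's: both arguments exploit the bijection $\tau\leftrightarrow\tilde\tau$ between testing functions for $\psi$ and for $\varphi$ (you run it in the direction $\tau\mapsto\tilde\tau=\frac{\beta\tau-\rho}{\beta-1}$, the paper in the direction $\tilde\tau\mapsto(1-\tfrac{1}{\beta})\tilde\tau+\tfrac{1}{\beta}\rho$ while remarking that every testing function for $\psi$ arises this way), then invoke the viscosity/distributional equivalence from [HL13] to get $m$-positivity of $\chi+i\partial\bar\partial\tilde\tau$ at $p$, and conclude via concavity of $F_m$ on the $m$-positive cone.
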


\begin{proof} Fix $p\in X$ and let $\tau:U\longrightarrow\R$ be a $C^2$ function defined in a neighbourhood of $p$ such that $\tau\geq \varphi$ on $U$ and $\tau(p)= \varphi(p)$ (if no such $\tau$ exists, there is nothing to prove). The equivalence of distributional and viscosity subsolutions - see [HL13] for the details  implies that $\chi+i\partial\bar{\partial}\tau\geq_{m,\ \omega}0$ at $p$. Additionally, $(1-\frac1{\beta})\tau+\frac1{\beta}\rho$ is a testing function for $(1-\frac1{\beta})\varphi+\frac1{\beta}\rho$ at $p$ and any such testing function can be written in this way. (Here we use the fact that $\rho$ is $C^2$ smooth). Hence: \begin{eqnarray*} F_m\bigg[\chi+i\partial\bar{\partial}\Big((1-\frac1{\beta})\tau+\frac1{\beta}\rho\Big)\bigg](p) & = & F_m\bigg[(1-\frac1{\beta})\Big(\chi+i\partial\bar{\partial}\tau\Big)+\frac1{\beta}\Big(\chi+i\partial\bar{\partial}\rho\Big)\bigg](p) \\
    & \geq & (1-\frac1{\beta})\,F_m[\chi+i\partial\bar{\partial}\tau](p)+\frac1{\beta}\,F_m[\chi+i\partial\bar{\partial}\rho](p), \end{eqnarray*} where we used the {\it concavity} of $F_m$ restricted to $\pm\cap C^2(X)$  -- see Assumption 1.1 and the ensuing discussion in [ChX25].

  Now, $\chi+i\partial\bar{\partial}\tau\geq_{m,\ \omega}0$ at $p$ implies $F_m[\chi+i\partial\bar{\partial}\tau](p)\geq 0$.  Thus, $$F_m\bigg[\chi+i\partial\bar{\partial}\Big((1-\frac1{\beta})\tau+\frac1{\beta}\rho\Big)\bigg](p)\geq \frac1{\beta}F_m[\chi+i\partial\bar{\partial}\rho](p)$$ for any testing function for $(1-\frac1{\beta})\varphi+\frac1{\beta}\rho$ at $p$. This concludes the proof.

\end{proof}

Fix now any $\varphi\in\pm$. Without loss of generality we may assume that $sup_X\varphi\leq-2$.  As $\varphi$ is upper semicontinuous, there is a non-increasing sequence $(f_j)_j$ of $C^\infty$ functions such that $sup_Xf_j\leq -1$ for every $j$ and $f_j\searrow\varphi$ pointwise on $X$ as $j\to\infty$.  The following lemma will be crucial:

\begin{Lem}\label{Lem:almost-approximation}  For any sequence $(f_j)_j$ as above, there exist real numbers $\beta(j)>e$ and functions $\widetilde{\varphi}_j\in\pm\cap C^{\infty}(X,\,\R)$ such that $\beta(j)\nearrow\infty$ as $j\to\infty $ and $$\varphi\leq\bigg(1-\frac1{\beta(j)}\bigg)\,\varphi< \widetilde{\varphi}_j-\frac2{\beta(j)}\,\log\bigg(\frac1{\beta(j)}\bigg)\leq f_j-\frac2{\beta(j)}\,\log\bigg(\frac1{\beta(j)}\bigg).$$

\end{Lem}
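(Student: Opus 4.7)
The plan is to construct $\widetilde{\varphi}_j$ as the smooth solution of an Aubin--Yau type equation for $F_m$, calibrated so that the classical maximum principle forces $\widetilde{\varphi}_j\leq f_j$ while a viscosity comparison with $(1-1/\beta)\varphi$---supplied by Lemma~\ref{Lem:visc-subsolutions} applied with $\rho\equiv 0$---produces the desired lower bound. Since $\chi>_{m,\omega}0$ and $X$ is compact, one has $c_0:=\min_X F_m[\chi]>0$, which will serve as the uniform positive lower bound needed by the viscosity argument.

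\emph{The auxiliary PDE and the upper bound.} Fix $\beta>e$ and set $A_j:=1+\sup_X(F_m[\chi+i\partial\bar\partial f_j])^+$, with the convention that $F_m[\cdot]=0$ wherever its argument fails to be $m$-positive; this is finite since $f_j$ is smooth. By the smooth solvability of the $F_m$-equation from [ChX25], combined with a standard Aubin--Yau continuity argument, pick $u=u_{j,\beta}\in \pm\cap C^\infty(X,\R)$ solving
\begin{equation*}
F_m[\chi+i\partial\bar\partial u]=A_j\,e^{\beta(u-f_j)}\quad\text{on } X.\qquad(\star)
\end{equation*}
At a point $p_+$ where $u-f_j$ is maximal, $i\partial\bar\partial(u-f_j)(p_+)\leq 0$ as Hermitian $(1,1)$-forms; since $\chi+i\partial\bar\partial u\geq_{m,\omega}0$, Courant--Fischer monotonicity of eigenvalue-sums forces $\chi+i\partial\bar\partial f_j\geq_{m,\omega}0$ at $p_+$ too, with $F_m[\chi+i\partial\bar\partial u](p_+)\leq F_m[\chi+i\partial\bar\partial f_j](p_+)\leq A_j$. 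Then $(\star)$ gives $u\leq f_j$ globally. The symmetric argument at a minimum of $u$ (using $F_m[\chi+i\partial\bar\partial u]\geq F_m[\chi]\geq c_0$ there) yields $\min u\geq \min f_j-\beta^{-1}\log(A_j/c_0)$, so $\|u\|_{L^\infty}\leq \|f_j\|_{L^\infty}+\beta^{-1}\log(A_j/c_0)$.

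\emph{The lower bound via viscosity.} Lemma~\ref{Lem:visc-subsolutions} with $\rho\equiv 0$ makes $\psi:=(1-1/\beta)\varphi$ a viscosity subsolution of $F_m[\chi+i\partial\bar\partial\,\cdot\,]\geq c_0/\beta$. The upper semi-continuous function $w:=\psi-u$ attains its maximum at some $p\in X$; if $w(p)\leq 0$ we are already done, so assume $w(p)>0$, whence $\varphi(p)>-\infty$. Then $\tau(z):=u(z)+w(p)$ is $C^2$, dominates $\psi$ globally, and agrees with it at $p$, so it is a testing function for $\psi$ at $p$; the subsolution property gives $F_m[\chi+i\partial\bar\partial u](p)=F_m[\chi+i\partial\bar\partial\tau](p)\geq c_0/\beta$. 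Combined with $(\star)$ this yields $u(p)\geq f_j(p)+\beta^{-1}\log(c_0/(A_j\beta))$. Using $\varphi\leq f_j$ and the equality $\psi(p)=u(p)+w(p)$, one checks (for $\beta\geq 2$) that $|\varphi(p)|\leq 2\|u\|_{L^\infty}$, and therefore
\begin{equation*}
w(p)\leq\frac{|\varphi(p)|+\log(A_j\beta/c_0)}{\beta}\leq\frac{2\|f_j\|_{L^\infty}+\log(A_j/c_0)+\log\beta+O(1)}{\beta}.
\end{equation*}

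\emph{Choice of $\beta(j)$ and main obstacle.} Choose $\beta(j)\nearrow\infty$ large enough that the displayed bound is strictly less than $2\log\beta(j)/\beta(j)$, equivalently $\log\beta(j)>2\|f_j\|_{L^\infty}+\log(A_j/c_0)+O(1)$, which is feasible for every $j$. Setting $\widetilde{\varphi}_j:=u_{j,\beta(j)}$ produces the claimed chain of inequalities, with $\varphi\leq(1-1/\beta(j))\varphi$ following from $\varphi\leq 0$. The principal difficulty is the lower bound: one must control $|\varphi(p)|$ at the a priori unknown maximum of $w$, which is circumvented by the self-referential identity $\psi(p)=u(p)+w(p)$ together with the $L^\infty$-bound on $u$ from the maximum principle at its extrema. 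The exponent $\beta$ in $(\star)$ is calibrated so that the viscous loss $F_m\geq c_0/\beta$ contributes only an $O(\log\beta/\beta)$ error, matching the tolerance $2\log\beta/\beta$ allowed in the statement.
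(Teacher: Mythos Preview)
Your argument is correct and follows the same scheme as the paper: solve a Berman-type $F_m$-equation with an $e^{\beta(u-f_j)}$ right-hand side, get $u\le f_j$ from the classical maximum principle, and get the strict lower bound $(1-1/\beta)\varphi<u-\tfrac{2}{\beta}\log(1/\beta)$ by testing the viscosity subsolution $(1-1/\beta)\varphi$ with $u+\text{const}$ at the extremum of their difference (Lemma~\ref{Lem:visc-subsolutions} with $\rho\equiv 0$).

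The only differences are cosmetic. The paper takes the right-hand side $e^{\beta(u-f_j)}F_m^j+\tfrac{1}{2\beta}F_m[\chi]$, with $F_m^j$ a smooth function squeezed between $F_m^+[\chi+i\partial\bar\partial f_j]+\tfrac12$ and $F_m^+[\chi+i\partial\bar\partial f_j]+1$; your constant $A_j$ plays the same role and is arguably simpler. In the lower-bound step the paper replaces $\varphi(q)$ by $f_j(q)$ at the outset (since $\varphi\le f_j$) and reads off the threshold $\log\beta>-\inf_X f_j+C_j$ directly, whereas you make a small detour through $|\varphi(p)|\le 2\|u\|_{L^\infty}$ via $\psi(p)=u(p)+w(p)$; both routes land on the same inequality $\log\beta(j)\gtrsim \|f_j\|_{L^\infty}+\log(A_j/c_0)$. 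The paper's additive term $\tfrac{1}{2\beta}F_m[\chi]$ is not essential to the estimates---it merely absorbs half of the viscous lower bound $\tfrac{1}{\beta}F_m[\chi]$---so your omission of it is harmless.
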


Assume the lemma for a while. Then, the non-increasing pointwise approximation can be achieved as follows.

Take $\varphi_1(z):=\widetilde{\varphi}_1(z)-\frac2{\beta(1)}\,\log(\frac1{\beta(1)})$. As the sequence  $-\frac2{\beta(j)}\,\log(\frac1{\beta(j)})$ decreases to $0$, the functions $f_j-\frac2{\beta(j)}\,\log(\frac1{\beta(j)})$ decrease to $\varphi$ as $j\to\infty$. Thus, by compactness of $X$, there is an index $j=j(1)$ such that $f_j(z)-\frac2{\beta(j)}\,\log(\frac1{\beta(j)})\leq \varphi_1(z)$ for every $z\in X$. Then, define $$\varphi_2(z):=\widetilde{\varphi}_{j(1)}(z)-\frac2{\beta(j(1))}\,\log\bigg(\frac1{\beta(j(1))}\bigg).$$

By construction, we have $\varphi_2\leq f_{j(1)}-\frac2{\beta(j(1))}\,\log(\frac1{\beta(j(1))})\leq \varphi_1(z)$. Therefore, there is an index $j(2)>j(1)$ such that $f_{j(2)}-\frac2{\beta(j(2))}\,\log(\frac1{\beta(j(2))})\leq \varphi_2(z)$ and we iterate the procedure. Obviously $\varphi_j\searrow\varphi$ pointwise on $X$ as $j\to\infty$.

\vspace{1ex}

Thus, assuming that Lemma \ref{Lem:almost-approximation} has been proved, we obtain the following global approximation result for almost $m$-psh functions with respect to $\omega$.

\begin{The}\label{The:global-approximation} Let $(X,\,\omega)$ be a compact complex Hermitian manifold with $\mbox{dim}_\C X = n$. Fix $m\in\{1,\dots , n\}$ and suppose there exists a $C^\infty$ real $(1,\,1)$-form $\chi$ on $X$ such that $d\chi = 0$ and $\chi>_{m,\,\omega}0$.

For any upper semi-continuous $L^1_{loc}$ function $\varphi:X\longrightarrow\R\cup\{-\infty\}$ such that $\chi + i\partial\bar\partial\varphi\geq_{m,\,\omega}0$ on $X$, there exists a sequence $(\varphi_j)_{j\geq 1}$ of $C^\infty$ functions $\varphi_j:X\longrightarrow\R$ such that:

\vspace{1ex}

(i)\, $\chi + i\partial\bar\partial\varphi_j\geq_{m,\,\omega}0$ on $X$ for every $j\geq 1$;

\vspace{1ex}

(ii)\, for every $x\in X$, the sequence $(\varphi_j(x))_{j\geq 1}$ of reals is non-increasing and converges to $\varphi(x)$ as $j\to\infty$.

\end{The}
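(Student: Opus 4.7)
The iterative construction of indices $j(k)\nearrow\infty$ described between Lemma \ref{Lem:almost-approximation} and the statement of Theorem \ref{The:global-approximation} already reduces the theorem to that lemma: once, for every $j$, one has a smooth $\widetilde\varphi_j\in P_m(X,\omega,\chi)$ sitting inside the sandwich
\begin{equation*}
\Big(1-\tfrac{1}{\beta(j)}\Big)\varphi<\widetilde\varphi_j-\tfrac{2}{\beta(j)}\log\tfrac{1}{\beta(j)}\leq f_j-\tfrac{2}{\beta(j)}\log\tfrac{1}{\beta(j)},
\end{equation*}
the vanishing of the error $(2\log\beta(j))/\beta(j)$ together with compactness of $X$ produces the required non-increasing sequence converging pointwise to $\varphi$. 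My plan therefore concentrates on proving Lemma \ref{Lem:almost-approximation}.

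Fix $j$ and a parameter $\beta=\beta(j)>e$ to be chosen large. The strategy, modelled on Berman's thermodynamical formalism and on [LN15], is to realise $\widetilde\varphi_j$ as a smooth solution of a Monge-Amp\`ere-type problem with an exponential right-hand side which pins $\widetilde\varphi_j$ near $f_j$ from above and near $\varphi$ from below. Choose an auxiliary $\rho\in P_m(X,\omega,\chi)\cap C^\infty(X,\R)$ with $F_m[\chi+i\partial\bar\partial\rho]\geq c_0>0$ on $X$, obtained from the smooth solvability of the $F_m$-equation of [ChX25] applied to a small constant datum $c_0$ (legitimate because $\chi>_{m,\,\omega}0$ keeps us strictly inside the $m$-admissible cone). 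By Lemma \ref{Lem:visc-subsolutions}, $w_\beta:=(1-1/\beta)\varphi+(1/\beta)\rho$ is then a viscosity subsolution of $F_m[\chi+i\partial\bar\partial w_\beta]\geq c_0/\beta$ and lies in $P_m(X,\omega,\chi)$. I would then define $\widetilde\varphi_j$ as the $C^\infty$ solution of
\begin{equation*}
F_m[\chi+i\partial\bar\partial\widetilde\varphi_j]=e^{\beta(\widetilde\varphi_j-f_j)}
\end{equation*}
on $X$, produced by a continuity method anchored on the smooth solvability statement of [ChX25] for smooth positive right-hand sides, combined with a standard fixed-point argument to absorb the dependence of the right-hand side on the unknown.

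The upper estimate $\widetilde\varphi_j\leq f_j+O(\beta^{-1}\log\beta)$ follows from the maximum principle applied at a maximum point of $\widetilde\varphi_j-f_j$, where $F_m[\chi+i\partial\bar\partial\widetilde\varphi_j]$ is controlled by the $L^\infty$-norm of $F_m[\chi+i\partial\bar\partial f_j]$ (finite since $f_j$ is smooth and $X$ compact). The lower estimate, which after rearrangement is the left inequality of Lemma \ref{Lem:almost-approximation}, is obtained by the viscosity comparison principle for the exponential equation above: applied to the subsolution $w_\beta$ and the smooth supersolution $\widetilde\varphi_j$, the comparison forces the exponential right-hand side to dominate $c_0/\beta$ at any would-be interior maximum of $w_\beta-\widetilde\varphi_j$, which yields $w_\beta\leq\widetilde\varphi_j+\beta^{-1}\log(C\beta/c_0)$. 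Normalising $\rho$ so that the bounded $\beta$-independent constant is absorbed exactly into the prescribed shift $(2/\beta)\log(1/\beta)$ gives the middle inequality of the lemma, and choosing $\beta(j)\nearrow\infty$ slowly enough that $f_j-(2/\beta(j))\log(1/\beta(j))$ still decreases to $\varphi$ pointwise completes the proof.

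The main obstacle is precisely this viscosity comparison step: $w_\beta$ is only upper semi-continuous and takes the value $-\infty$ along the singular locus of $\varphi$, so no direct $C^2$ matching between sub- and supersolution is available at a genuine maximum point of their difference. The comparison must instead be executed purely at the level of $C^2$ testing functions, exploiting the concavity of $F_m$ on the admissible cone (Assumption 1.1 of [ChX25]) and the equivalence of distributional and viscosity $m$-subharmonicity from [HL13] already invoked in Lemma \ref{Lem:visc-subsolutions}. Tracking the precise logarithmic correction $(2/\beta)\log(1/\beta)$ through this comparison is what makes the subsequent diagonal selection telescope correctly, and so this is the delicate analytical core of the whole argument.
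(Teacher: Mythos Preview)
Your outline is correct and follows the same architecture as the paper's proof: reduce to Lemma \ref{Lem:almost-approximation}, realise $\widetilde\varphi_j$ as the smooth solution of a Berman-type exponential $F_m$-equation, apply the maximum principle for the upper bound, and invoke Lemma \ref{Lem:visc-subsolutions} for the lower bound. Two simplifications are worth noting. First, the auxiliary $\rho$ is unnecessary: since $\chi>_{m,\omega}0$ on the compact $X$, one takes $\rho=0$ in Lemma \ref{Lem:visc-subsolutions} and gets $F_m[\chi]\geq c_0>0$ directly; likewise, no fixed-point layer on top of [ChX25] is needed, because Theorem 3.1 there already treats smooth positive right-hand sides $G(z,t)$ increasing in $t$. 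Second, the paper builds the right-hand side as $e^{\beta(u_j^\beta-f_j)}F_m^j+\tfrac{1}{2\beta}F_m[\chi]$ rather than the bare exponential: the factor $F_m^j\geq F_m^+[\chi+i\partial\bar\partial f_j]$ is what makes the maximum-principle step cancel to the \emph{exact} inequality $u_j^\beta\leq f_j$ of Lemma \ref{Lem:almost-approximation} (your version only gives $\widetilde\varphi_j\leq f_j+C_j/\beta$, which still suffices for the theorem but not for the lemma as stated). Finally, the ``main obstacle'' you flag is not one: at a maximum point of $w_\beta-\widetilde\varphi_j$ (which exists since $w_\beta$ is u.s.c.\ on compact $X$ and $\not\equiv-\infty$), the smooth function $\widetilde\varphi_j+\text{const}$ is itself an admissible $C^2$ testing function for $w_\beta$, so the viscosity inequality is read off directly and no general comparison principle is needed.
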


\vspace{2ex}

Thus, it remains to prove the above key lemma.

\begin{proof}[Proof of Lemma \ref{Lem:almost-approximation}] Fix $\varphi\in\pm$ and the non-increasing sequence of $C^\infty$ approximants $f_j$. Define \begin{equation}\label{eqn:F_m-plus}
F_m^+[\chi+i\partial\bar{\partial}f_j](z):=\begin{cases}
	F_m[\chi+i\partial\bar{\partial}f_j](z)\ \ &{\rm if}\ \ \chi+i\partial\bar{\partial}f_j(z)\geq_{m,\, \omega}0;\\
	0&{\rm otherwise}.
\end{cases}
\end{equation}

Note that for each $j$, $F_m^+[\chi+i\partial\bar{\partial}f_j]:X\longrightarrow[0,\,\infty)$ is a continuous function. Hence, there exists a $C^\infty$ function $F_m^j:X\longrightarrow\R$ such that $$0\leq F_m^+[\chi+i\partial\bar{\partial}f_j]+\frac12\leq F_m^j\leq F_m^+[\chi+i\partial\bar{\partial}f_j]+1.$$

For any parameter $\beta>e$, let $u_j^{\beta}\in \pm\cap C^{\infty}(X)$ be the (unique) solution to the equation \begin{equation}\label{eqn:Berman}
		F_m[\chi+i\partial\bar{\partial}u_j^{\beta}](z)=e^{\beta(u_j^{\beta}(z)-f_j(z))}F_m^j(z) + \frac{F_{m}[\chi](z)}{2\beta}=:G(z,u_j^{\beta}(z)).
\end{equation}

As the right-hand-side function $X\times\mathbb R\ni(z,t)\longmapsto G(z,t)$ is smooth, positive and increasing in the second variable, Theorem 3.1  from [ChX25] ensures the existence, uniqueness and smoothness of $u_j^{\beta}$. Note that here we are using assumption (\ref{alpha-m-positive}) and the property $F_m^j>0$ to ensure the strict positivity of $G$ and its increasing monotonicity in the second variable.


\vspace{1ex}

We will now show that $\widetilde{\varphi}_j$ can be taken of the form $u_j^{\beta(j)}$ for sufficiently large $\beta(j)$. To this end, let us first show that $u_j^{\beta}\leq f_j$ for any $\beta$ -- this will yield the third inequality claimed in Lemma \ref{Lem:almost-approximation}.

\vspace{1ex}

Fix a point $p\in X$ where $u_j^{\beta}-f_j$ achieves its maximum. In particular, $$(\chi+i\partial\bar{\partial}f_j )(p)\geq (\chi+i\partial\bar{\partial}u_j^{\beta})(p)$$ as $(1,1)$-forms as the Hessian of $f_j-u_j^{\beta}$ is positive semi-definite. This, coupled with $\chi+i\partial\bar{\partial}u_j^{\beta}>_{m,\, \omega}0$ (the inequality is strict since $u_j^{\beta}$ is smooth and $F_m[\chi+i\partial\bar{\partial}u_j^{\beta}]>0$) implies that \begin{eqnarray*}F_m^+[\chi+i\partial\bar{\partial}f_j](p) & = & F_m[\chi+i\partial\bar{\partial}f_j](p)\geq F_m[\chi+i\partial\bar{\partial}u_j^{\beta}](p)\geq e^{\beta(u_j^{\beta}(p)-f_j(p))}F_m^j(p)\\
 & \geq & e^{\beta(u_j^{\beta}(p)-f_j(p))}F_m^+[\chi+i\partial\bar{\partial}f_j](p).\end{eqnarray*}

This string of inequalities coupled with the positivity of $F_m^+[\chi+i\partial\bar{\partial}f_j](p)$ leads to $$0\geq \beta(u_j^{\beta}(p)-f_j(p)).$$ Thus, $u_j^{\beta}\leq f_j$ everywhere on $X$.

\vspace{1ex}

To prove the second inequality in Lemma \ref{Lem:almost-approximation}, we will use a similar idea for $\beta$ sufficiently large. Fix a point $q\in X$ where the function $$u_j^{\beta}-\bigg(1-\frac1{\beta}\bigg)\varphi-\frac2\beta\,\log\bigg(\frac1{\beta}\bigg)$$ achieves its minimum equal to $-\frac{c}{\beta}$ for some constant $c$. (A minimum does exist since $\varphi$ is upper semi-continuous and $X$ is compact.) Equivalently, $u_j^{\beta}-\frac2\beta log(\frac1{\beta})$ is a testing function at $q$ for  $(1-\frac1{\beta})\varphi+\frac c\beta.$

From Lemma \ref{Lem:visc-subsolutions} (and its proof), we get the (first) inequality below: \begin{equation}\label{eqn:lhs-in-Lemma2.4first}
  \frac1\beta F_m[\chi](q)\leq F_m[\chi+i\partial\bar{\partial}u_j^{\beta}](q) = e^{\beta(u_j^{\beta}(q)-f_j(q))}F_m^j(q)+\frac{F_m[\chi](q)}{2\beta}. \end{equation}

Thus,
\begin{equation}\label{eqn:lhs-in-Lemma2.4}
 \frac1\beta\leq e^{\beta(u_j^{\beta}(q)-f_j(q))}\frac{2F_m^j(q)}{F_m[\chi](q)}.
\end{equation}


Let us denote by $C_j$ the constant $sup_Xlog\Big(\frac{2F_m^j(z)}{F_m[\chi](z)}\Big)$. Then (\ref{eqn:lhs-in-Lemma2.4}) can be rewritten as $$e^{C_j+\beta(u_j^{\beta}(q)-f_j(q))}\geq \frac1{\beta}=e^{log(\frac1{\beta})},$$

\noindent or, more succinctly, as

\begin{equation}\label{eqn:u_j-f_j}
\frac{C_j}{\beta}+(u_j^{\beta}(q)-f_j(q))-\frac1{\beta}\,\log\bigg(\frac1{\beta}\bigg)\geq 0.
\end{equation}

Hence: \begin{align*}
&u_j^{\beta}(q)-\bigg(1-\frac1{\beta}\bigg)\,\varphi(q)-\frac2\beta\, \log\bigg(\frac1{\beta}\bigg)\geq u_j^{\beta}(q)-\bigg(1-\frac1{\beta}\bigg)\,f_j(q)-\frac2\beta\, \log\bigg(\frac1{\beta}\bigg)\\
&=\frac1{\beta}f_j(q)+(u_j^{\beta}(q)-f_j(q))-\frac2\beta\, \log\bigg(\frac1{\beta}\bigg)\geq \frac1{\beta}f_j(q)-\frac{C_j}{\beta}-\frac1\beta\, \log\bigg(\frac1{\beta}\bigg),
\end{align*} where we used (\ref{eqn:u_j-f_j}) in the last inequality. Choosing $\beta$ so large that $log(\beta)> -inf_Xf_j+C_j$ yields $$u_j^{\beta}-\bigg(1-\frac1{\beta}\bigg)\varphi-\frac2\beta\, \log\bigg(\frac1{\beta}\bigg)>0$$ at $q$, hence everywhere. This completes the proof.

\end{proof}

\subsection{Local regularisation}\label{subsection:local-regularisation}

In this subsection, we consider the local regularisation problem. In particular, $X$ may not be compact. We consider a $(1,\,1)$-current $T=\chi+i\partial\bar\partial\varphi\geq_{m,\, \omega}0$ on $X$, we fix an arbitrary point $p\in X$ and we seek a non-increasing sequence $\varphi_j\searrow\varphi$ of $C^\infty$ functions $\varphi_j:U_p\longrightarrow\R$ defined in an open neighbourhood $U_p\subset X$ of $p$.

To this end, we fix a coordinate chart centred at $p$ and a sufficiently small open ball $U_p$ in these coordinates. Note that $\chi=i\partial\bar\partial\rho$ for some $C^\infty$ function $\rho:U_p\longrightarrow\R$ (shrink $U_p$ about $p$ if necessary). Hence, the function $u(z):=\rho(z)+\varphi(z)$ defined for $z\in U_p$ is $m$-psh with respect to $\omega$ (cf. definition (\ref{eqn::m-semi-pos_equivalence-potential})) and $T_{|U_p} = i\partial\bar\partial u$. Thus, the regularisation problem reduces to non-increasingly approximating $u$ by $m$-psh (with respect to $\omega$) functions $u_j$ defined on $U_p$.

Much as in the argument in the previous subsection, we let $(f_j)_{j\geq 1}$ be a sequence of $C^\infty$ functions $f_j:U_p\longrightarrow\R$ decreasing pointwise to $u$. By adding a constant to $\rho$, if necessary, we may assume that $\sup_{U_p}u\leq -2$ and $\sup_{U_p}f_j\leq -1$ for every $j\geq 1$.

\vspace{1ex}

The local analogue of the operator $F_m$ of Definition \ref{Def:F_m} is given in the following

\begin{Def}\label{Def:local_F_m} Let $U_p\subset X$ be as above. For any $C^2$ function $v:U_p\longrightarrow\R$ such that $i\partial\bar\partial v\geq_{m,\, \omega}0$ on $U_p$, let $\lambda_1\leq\dots\leq\lambda_n$ be the eigenvalues of the $(1,\,1)$-form $i\partial\bar\partial v$ with respect to $\omega$ -- see (\ref{eqn:coordinates_diagonalisation}). Then, the function $F_m[i\partial\bar{\partial}v]$ is defined at every point $z\in U_p$ by the formula:

\begin{equation*}F_m[i\partial\bar{\partial}v](z):=\Big(\Pi_{|J|=m}\Big[\sum_{j\in J}\lambda_j(z)\Big]\Big)^{1/\binom nm},\end{equation*} where $J=(j_1,\cdots,j_m)$ runs over all ordered $m$-tuples of indices from $\lbrace1,\cdots,n\rbrace$.

\end{Def}

Just as in the previous subsection, one can define viscosity subsolutions through testing with local $C^2$ functions touching the would-be subsolution from above.

The analogue of Lemma \ref{Lem:visc-subsolutions} can be stated as follows:

\begin{Lem}\label{Lem:local-visc-subsolutions} Let $v\in C^2(U_p,\,\R)$ and $u:U_p\longrightarrow\R\cup\{-\infty\}$ be $m$-psh functions with respect to $\omega$. Then, for any $\beta>1$ we have:
$$F_m\bigg[i\partial\bar{\partial}\Big(u+\frac1{\beta}v\Big)\bigg]\geq \frac1{\beta}F_m[i\partial\bar{\partial}v]$$ in the viscosity sense at every point $z\in U_p$.
\end{Lem}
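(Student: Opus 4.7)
My plan is to mimic the proof of Lemma \ref{Lem:visc-subsolutions}, adapting it to the simpler local setting where the reference $(1,\,1)$-form $\chi$ is absent and the decomposition $u+\frac{1}{\beta}v$ (not a convex combination) replaces $(1-\frac{1}{\beta})\varphi+\frac{1}{\beta}\rho$. This turns out to work because $F_m$ is $1$-homogeneous and concave on the $m$-positive cone, hence superadditive on it.

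First I would fix an arbitrary point $z_0\in U_p$ and an arbitrary $C^2$ testing function $\tau$ for $u+\frac{1}{\beta}v$ at $z_0$, defined on some neighbourhood $W\subset U_p$ of $z_0$; if no such $\tau$ exists, the viscosity inequality at $z_0$ holds vacuously. Because $v$ is $C^2$, the function $\widetilde{\tau}:=\tau-\frac{1}{\beta}v$ is $C^2$ on $W$, and the testing inequalities $\tau\geq u+\frac{1}{\beta}v$ on $W$, $\tau(z_0)=u(z_0)+\frac{1}{\beta}v(z_0)$ translate into $\widetilde{\tau}\geq u$ on $W$ with equality at $z_0$; thus $\widetilde{\tau}$ is a $C^2$ testing function for $u$ at $z_0$. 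As in Lemma \ref{Lem:visc-subsolutions}, it is here that $C^2$-smoothness of $v$ is essential, and one can verify that every testing function for $u+\frac{1}{\beta}v$ at $z_0$ arises in this way.

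Next I would invoke the equivalence of distributional and viscosity subsolutions from [HL13] (the same input used for the global statement): since $u$ is $m$-psh with respect to $\omega$, it is a viscosity subsolution of the degenerate inequality $i\partial\bar\partial u\geq_{m,\,\omega}0$, so being touched from above at $z_0$ by $\widetilde{\tau}$ forces $i\partial\bar\partial\widetilde{\tau}(z_0)\geq_{m,\,\omega}0$. Rearranging yields
\[
i\partial\bar\partial\tau(z_0)\;=\;i\partial\bar\partial\widetilde{\tau}(z_0)\;+\;\frac{1}{\beta}\,i\partial\bar\partial v(z_0),
\]
and both summands on the right lie in the $m$-positive cone at $z_0$ (the first by the preceding sentence, the second because $v$ is itself $m$-psh and $\beta>0$).

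To close the argument I would use the two structural properties of $F_m$ already exploited in Lemma \ref{Lem:visc-subsolutions}, namely concavity on the $m$-positive cone together with $1$-homogeneity (cf. Assumption 1.1 and the ensuing discussion in [ChX25]), which combine to give superadditivity on that cone. Applied to the above decomposition of $i\partial\bar\partial\tau(z_0)$, this yields
\[
F_m[i\partial\bar\partial\tau](z_0)\;\geq\;F_m[i\partial\bar\partial\widetilde{\tau}](z_0)+\frac{1}{\beta}\,F_m[i\partial\bar\partial v](z_0)\;\geq\;\frac{1}{\beta}\,F_m[i\partial\bar\partial v](z_0),
\]
the last inequality coming from $F_m[i\partial\bar\partial\widetilde{\tau}](z_0)\geq 0$, which holds because $i\partial\bar\partial\widetilde{\tau}(z_0)\geq_{m,\,\omega}0$. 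Since $z_0$ and the testing function $\tau$ were arbitrary, this is precisely the viscosity inequality claimed. I do not expect a genuine obstacle here: the only substantive input is the Harvey--Lawson equivalence between distributional and viscosity $m$-subsolutions, which is purely local and already cited in the global lemma; everything else is a direct bookkeeping adaptation, made if anything simpler by the absence of $\chi$.
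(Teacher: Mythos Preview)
Your proposal is correct and follows essentially the same route as the paper, which simply states that the proof repeats the one of Lemma~\ref{Lem:visc-subsolutions} and skips the details. The only cosmetic difference is that you package concavity plus $1$-homogeneity as superadditivity (needed because $u+\tfrac{1}{\beta}v$ is a sum rather than the convex combination appearing in the global lemma), whereas a literal transcription of the global argument would rewrite $i\partial\bar\partial\widetilde{\tau}+\tfrac{1}{\beta}\,i\partial\bar\partial v$ as the convex combination $(1-\tfrac{1}{\beta})\cdot\tfrac{\beta}{\beta-1}\,i\partial\bar\partial\widetilde{\tau}+\tfrac{1}{\beta}\,i\partial\bar\partial v$ before invoking concavity and homogeneity; the two computations are of course equivalent.
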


\begin{proof}
 As the proof repeats the one of Lemma \ref{Lem:visc-subsolutions} from the global case, we skip the details.
\end{proof}

Just as in the global case, we shall need a result on the smooth solvability of the Dirichlet problem for the local $F_m$ operator.  Despite the extensive literature on the subject (see [GGQ22], [Do23] and the references therein), we were unable to find the exact statement, spelt out below, that is relevant to our case. Therefore, we shall provide a proof in the Appendix ($\S$\ref{section:Appendix}).

\begin{The}\label{Thm:Dirichlet-Fm} Let $(X,\omega)$ be an $n$-dimensional complex Hermitian manifold, $m\in\{1,\dots , n\}$, $p\in X$ an arbitrary point and $U_p\subset X$ a sufficiently small ball in a coordinate chart about $p$. Then, for any constant $\beta>e$, the Dirichlet problem:
 \begin{equation*}
  \begin{cases}
  u_j^{\beta}\in C^{\infty}(\overline U_p);&\\
  i\partial\bar{\partial}u_j^{\beta}>_{m,\, \omega}0;&\\
   F_m[i\partial\bar{\partial}u_j^{\beta}]=e^{\beta(u_j^{\beta}-f_j)}+\frac{1}{2\beta}\ &{\rm in}\ U_p;\\
   u_j^\beta|_{\partial U_p}=f_j|_{\partial U_p}
  \end{cases}
 \end{equation*} admits a unique solution $u_j^{\beta}$.
\end{The}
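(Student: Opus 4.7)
The approach is the standard continuity method combined with a priori estimates, adapted to the concave elliptic operator $F_m$ on the $m$-positive cone with right-hand side $H(z,t):=e^{\beta(t-f_j(z))}+\frac{1}{2\beta}$. Two structural features of this equation will drive every argument: $F_m$ is concave and uniformly elliptic on strictly $m$-positive forms (Assumption 1.1 of [ChX25]), while $H$ is smooth, uniformly bounded below by $\frac{1}{2\beta}>0$, and strictly increasing in $t$. The latter monotonicity is what gives both a clean comparison principle and the invertibility of the linearised equation.

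The first step is to construct a global strict $m$-subsolution and an upper barrier. Since $U_p$ is a coordinate ball $\{|z|<r\}$, by shrinking $r$ we may assume $i\pa\bar\pa |z|^2$ is uniformly comparable to $\omega$. For $A$ large the function $\underline u(z):=f_j(z)+A(|z|^2-r^2)$ is then strictly $m$-positive with $F_m[i\pa\bar\pa\underline u]\gg H(z,\underline u)$ on $\overline U_p$ and $\underline u=f_j$ on $\partial U_p$. Symmetrically, $f_j+C$ for a suitable constant $C$ serves as an upper barrier. Comparing a prospective solution to $\underline u$ and this upper barrier via the maximum principle (which uses the strict monotonicity of $H$ in $t$) yields the $C^0$ estimate. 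I would then set up the continuity method through a family of equations interpolating between a trivially solvable linear Poisson problem at $t=0$ and the prescribed equation at $t=1$, preserving $\underline u$ as a subsolution throughout. Openness is an implicit-function-theorem argument in $C^{2,\alpha}_0(\overline U_p)$: the linearisation of $F_m[i\pa\bar\pa\cdot]-H(\cdot,\cdot)$ at an admissible solution is a second-order uniformly elliptic operator with strictly negative zeroth-order term, hence invertible by Schauder theory.

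Closedness requires uniform $C^\infty$ bounds along the path. After the $C^0$ estimate, a standard maximum-principle argument on $|\nabla u_t|^2$, using $\underline u$ and the boundary data as gradient barriers, gives the $C^1$ estimate. The main obstacle, where the bulk of the real work sits, is the $C^2$ estimate in the Hermitian (non-K\"ahler) background. Boundary $C^2$ estimates follow the classical Caffarelli--Nirenberg--Spruck pattern: tangential-tangential bounds are read off from the boundary values, tangential-normal bounds come from a barrier built out of $\underline u$, and normal-normal bounds are extracted from the equation itself together with the concavity of $F_m$. The interior $C^2$ estimate is obtained by maximising a test quantity of the form $\log\lambda_{\max}(i\pa\bar\pa u_t)+h(|\nabla u_t|^2)+\psi(u_t-\underline u)$ with carefully tuned $h,\psi$, and absorbing the torsion-type error terms coming from $d\omega\neq 0$ into the leading positive contributions supplied by the strict $m$-positivity of $\underline u$, in the spirit of Hou--Ma--Wu and Dinew--Ko\l{}odziej; detailed versions adapted to Hessian-type operators of the $F_m$ family appear in [GGQ22] and [Do23], and only minor bookkeeping is needed to match our conventions.

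Once the $C^2$ bound is in hand, Evans--Krylov applied to the concave operator $F_m$ yields a uniform $C^{2,\alpha}$ estimate, and linear Schauder bootstrap promotes this to uniform $C^\infty$ bounds depending only on $\omega$, $f_j$ and $\beta$. Existence of a smooth strictly $m$-positive solution at $t=1$ then follows from the continuity method, and uniqueness is immediate from the comparison principle applied to the concave equation $F_m[i\pa\bar\pa\cdot]=H(z,\cdot)$ with $H$ strictly increasing in its second variable: any two solutions $u,v$ give a difference $u-v$ that cannot attain a positive interior maximum or a negative interior minimum.
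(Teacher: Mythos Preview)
Your overall strategy---continuity method, barrier $\underline u=f_j+A(|z|^2-r^2)$, openness via the implicit function theorem, $C^0\to C^1\to C^2$ a priori estimates, then Evans--Krylov and bootstrap---is exactly the scheme the paper carries out in its Appendix, and the references [GGQ22], [Do23] you cite are the ones the paper leans on (together with [GN18] for the gradient and interior $C^2$ estimates, and [CP22] for the boundary barrier). Two points deserve correction or sharper emphasis.

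First, your continuity path (``interpolating between a trivially solvable linear Poisson problem at $t=0$'') is not a safe choice: deforming the \emph{operator} away from $F_m$ risks leaving the admissible $m$-positive cone and losing concavity and ellipticity along the path. The paper keeps $F_m$ on the left throughout and deforms only the right-hand side,
\[
F_m[i\pa\bar\pa u_t]=tG(z,u_t)+(1-t)\,F_m[i\pa\bar\pa\underline u],
\]
so that $u_0=\underline u$ solves $(*_0)$ exactly and every $u_t$ remains strictly $m$-positive. This is a minor fix, but your formulation as written would not run.

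Second, you understate the normal-normal boundary estimate. For the complex Monge--Amp\`ere equation one can indeed essentially read it off from the equation once the tangential and mixed derivatives are controlled, but for $F_m$ with $m<n$ this is the most delicate step. One must first establish a uniform lower bound
\[
\prod_{|I|=m,\ i_m<n}\bigl(u_{i_1\bar i_1}+\cdots+u_{i_m\bar i_m}\bigr)\geq c_0>0
\]
on the tangential eigenvalue sums at the boundary point, and this requires a genuine barrier construction (the paper's Step~5, following [Do23] and the Collins--Picard barrier of [CP22]). Your one-line ``extracted from the equation itself together with the concavity of $F_m$'' does not cover this, and the $C^2$ estimate would not close without it.
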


We are now ready to state and prove the main result of this subsection:

\begin{The}\label{The:local-approximation}
 If $u:U_p\longrightarrow\R\cup\{-\infty\}$ is any $m$-psh (with respect to $\omega$) function, there exists a sequence $(u_j)_{j\geq 1}$ of $C^\infty$, strictly $m$-psh (with respect to $\omega$) functions $u_j:U_p\longrightarrow\R$ such that the sequence $(u_j(x))_{j\geq 1}$ of reals is non-increasing and converges to $u(x)$ for every point $x\in X$.
\end{The}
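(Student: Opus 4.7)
The plan is to mimic the strategy of Theorem~\ref{The:global-approximation}, replacing its global ingredients with local counterparts: Lemma~\ref{Lem:local-visc-subsolutions} in place of Lemma~\ref{Lem:visc-subsolutions} and Theorem~\ref{Thm:Dirichlet-Fm} in place of the global solvability result of [ChX25]. After shrinking $U_p$ to a small coordinate ball, I would normalise so that $\sup_{U_p}u\leq-2$ and fix a sequence $(f_j)_{j\geq 1}\subset C^\infty(\overline{U_p})$ with $\sup f_j\leq -1$ and $f_j\searrow u$ pointwise on $\overline{U_p}$. For each $j\geq 1$ and each $\beta>e$, Theorem~\ref{Thm:Dirichlet-Fm} then produces a $C^\infty$ strictly $m$-positive (with respect to $\omega$) solution $u_j^\beta$ of the Dirichlet problem
\begin{equation*}
F_m[i\partial\bar\partial u_j^\beta]=e^{\beta(u_j^\beta-f_j)}+\frac{1}{2\beta}\quad\text{in }U_p,\qquad u_j^\beta|_{\partial U_p}=f_j|_{\partial U_p}.
\end{equation*}

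The heart of the argument is a sandwich estimate $u-\delta(\beta)\leq u_j^\beta\leq f_j+\delta_j(\beta)$ on $U_p$, where $\delta(\beta),\delta_j(\beta)=O((\log\beta)/\beta)$ as $\beta\to\infty$. For the upper bound I examine a point $q'\in\overline{U_p}$ where $u_j^\beta-f_j$ is maximal: if $q'\in\partial U_p$ the maximum vanishes; if $q'$ is interior, the Hessian comparison $i\partial\bar\partial f_j(q')\geq i\partial\bar\partial u_j^\beta(q')>_{m,\omega}0$ forces $f_j$ to be $m$-positive at $q'$, and G\r{a}rding-type monotonicity of $F_m$ on its positivity cone yields $F_m[i\partial\bar\partial f_j](q')\geq e^{\beta(u_j^\beta-f_j)(q')}$, giving the upper bound with $\delta_j(\beta):=\beta^{-1}\log\max\bigl(1,\sup_{U_p}F_m^+[i\partial\bar\partial f_j]\bigr)$ (the truncation $F_m^+$ being defined exactly as in (\ref{eqn:F_m-plus})).

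For the lower bound I would fix a smooth strictly $m$-positive function $v$ on $\overline{U_p}$ with $v\geq 0$ and $F_m[i\partial\bar\partial v]\geq 1$---for instance $v(z):=A|z|^2$ for $A$ sufficiently large, legitimate because $\omega$ is comparable to the Euclidean metric on the small ball---and analyse the lower semi-continuous function $\psi:=u_j^\beta-u-(1/\beta)v$ at its infimum point $q\in\overline{U_p}$. The infimum is automatically attained at a point with $u(q)>-\infty$, since otherwise $\psi(q)=+\infty$. If $q$ lies in the interior, then $u_j^\beta-\psi(q)$ is a $C^\infty$ testing function from above for the $m$-psh function $u+(1/\beta)v$, so Lemma~\ref{Lem:local-visc-subsolutions} yields $F_m[i\partial\bar\partial u_j^\beta](q)\geq (1/\beta)F_m[i\partial\bar\partial v](q)\geq 1/\beta$; combined with the Dirichlet equation this forces $e^{\beta(u_j^\beta(q)-f_j(q))}\geq 1/(2\beta)$, i.e.\ $u_j^\beta(q)\geq u(q)-\log(2\beta)/\beta$. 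If instead $q\in\partial U_p$, the boundary condition $u_j^\beta(q)=f_j(q)\geq u(q)$ handles the point trivially. In either case one concludes $\psi\geq -(\log(2\beta)+V)/\beta$ with $V:=\sup_{U_p}v$, and hence $u_j^\beta\geq u-(\log(2\beta)+V)/\beta=:u-\delta(\beta)$ on $U_p$.

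The translated functions $\varphi_j^\beta:=u_j^\beta+\delta(\beta)$ remain $C^\infty$ and strictly $m$-positive, satisfy $\varphi_j^\beta\geq u$ on $U_p$ and $\varphi_j^\beta\leq f_j+\delta(\beta)+\delta_j(\beta)$. The diagonal extraction procedure from the end of Section~\ref{subsection:global-regularisation}---iteratively choosing indices $j(k)$ and parameters $\beta(k)\to\infty$ rapidly enough that each $\varphi_{j(k+1)}^{\beta(k+1)}$ is dominated by $\varphi_{j(k)}^{\beta(k)}$---then produces the required non-increasing sequence with $\varphi_{j(k)}^{\beta(k)}\searrow u$ pointwise on $U_p$. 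I expect the main obstacle to be the lower bound: identifying an auxiliary smooth $m$-psh $v$ with a uniform positive lower bound on $F_m[i\partial\bar\partial v]$, and applying viscosity testing at the infimum of a lower semi-continuous function without knowing \emph{a priori} whether the infimum point lies in the interior of $U_p$. The other serious input is Theorem~\ref{Thm:Dirichlet-Fm} itself, whose technical proof is deferred to the Appendix.
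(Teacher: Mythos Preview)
Your proposal is correct and follows essentially the same route as the paper's proof: both solve the Dirichlet problem of Theorem~\ref{Thm:Dirichlet-Fm}, obtain the upper bound $u_j^\beta\leq f_j+O(1/\beta)$ via the maximum principle at an interior maximum and the lower bound $u_j^\beta\geq u-O((\log\beta)/\beta)$ via Lemma~\ref{Lem:local-visc-subsolutions} tested against an auxiliary strictly plurisubharmonic function (the paper uses $C(\|z\|^2-r^2)$ where you use $A|z|^2$, a cosmetic difference), and then extract a decreasing subsequence by a Dini-type compactness argument on $\overline{U_p}$. The one refinement to watch is that the extraction step needs the \emph{strict} inequality $\varphi_j^\beta>u$ in order to eventually dominate $f_{j'}+\delta(\beta')+\delta_{j'}(\beta')$ on all of $\overline{U_p}$; the paper secures this by shifting $u_j^\beta$ by slightly more than the lower-bound error (adding $2Cr^2/\beta$ rather than $Cr^2/\beta$), whereas your shift by exactly $\delta(\beta)$ yields only $\varphi_j^\beta\geq u$---so add any extra positive term tending to zero.
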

\begin{proof}
  Let $(f_j)_{j\geq 1}$ be the approximating sequence of $C^\infty$ functions $f_j:U_p\longrightarrow\R$ decreasing pointwise to $u$ that were fixed above and let $(u_j^{\beta})_{j\geq 1}$ be the corresponding solutions to the Dirichlet problem provided by Theorem \ref{Thm:Dirichlet-Fm}.

 \vspace{1ex} 

 {\bf Claim $1$.} {\it If $c_j:=max\lbrace sup_{U_p}F_m[i\partial\bar{\partial}f_j],e\rbrace$, then for any $\beta>e$ we have:
 \begin{equation}\label{eqn:uleqfj}
  u_j^{\beta}\leq f_j+\frac{log(c_j)}{\beta} \hspace{5ex}\mbox{on}\hspace{1ex} \overline{U}_p, \hspace{2ex} j\geq 1.
 \end{equation}}

 \noindent {\it Proof of Claim $1$.} Let $q$ be a point at which $u_j^{\beta}-f_j$ achieves its maximum over $\overline{U}_p$. If $q\in\partial U_p$, then $u_j^{\beta}(q)-f_j(q) = 0$ by the boundary condition and we are done. Otherwise, we have:
 $$i\partial\bar{\partial}u_j^{\beta}(q)\leq i\partial\bar{\partial}f_j(q),$$
 hence \begin{align*}
  c_j\geq F_m[i\partial\bar{\partial}f_j](q)\geq F_m[i\partial\bar{\partial}u_j^{\beta}](q)=e^{\beta(u_j^{\beta}(q)-f_j(q))}+\frac{1}{\beta}\geq e^{\beta(u_j^{\beta}(q)-f_j(q))},&
 \end{align*}
which yields the claim.   \hfill $\Box$

\vspace{1ex}

{\bf Claim $2$.} {\it There exists a constant $C>0$, depending only on $(X,\omega, U_p)$, such that $$u\leq u_{j}^{\beta}+C\frac{r^2}{\beta}-\frac1\beta log(\frac1{2\beta})$$ at every point of $\overline{U}_p$, where $r$ is the radius of $U_p$.}

\vspace{1ex}

\noindent {\it Proof of Claim $2$.} We consider the function $$l(z):=u_j^{\beta}(z)-u(z)-\frac{C}{\beta}(||z||^2-r^2)$$ defined on $\overline{U}_p$, with the constant $C$ chosen such that, at every point of $\overline{U}_p$, we have: $$CF_m[i\partial\bar{\partial}(||z||^2-r^2)]\geq 1.$$ (The existence of such a $C$ follows from the strict plurisubharmonicity of $||z||^2$).

If the minimum of this function over $\overline{U}_p$ occurs at a boundary point $q$, the relations $u_j^{\beta}(q)=f_j(q)\geq u(q)$ conclude the argument. Henceforth we assume that the minimum is achieved at some point $q\in U_p$. Thus, $u_j^{\beta}-min_{U_p}l$ is a testing function for $u+\frac{C}{\beta}(||z|^2-r^2)$ at $q$. Using the subsolution property of $u$  and  Lemma \ref{Lem:local-visc-subsolutions} we obtain
$$F_m[i\partial\bar{\partial}u_j^{\beta}](q)\geq\frac C{\beta}F_m[i\partial\bar{\partial}(||z||^2-r^2)](q)\geq \frac{1}{\beta}.$$

Coupling this with the definition of $u_j^{\beta}$ we obtain
\begin{equation*}
 e^{\beta(u_j^{\beta}(q)-f_j(q))}+\frac{1}{2\beta}\geq\frac1{\beta},
\end{equation*}
which in turn implies $\beta(u_j^{\beta}(q)-f_j(q))\geq log(\frac1{2\beta})$.  Hence:
\begin{align*}
 u_j^{\beta}(z)+C\frac{r^2}{\beta}-\frac1\beta log(\frac1{2\beta})-u(z)\geq& u_j^{\beta}(z)-u(z)-\frac{C}{\beta}(||z||^2-r^2)-\frac1\beta log(\frac1{2\beta}),  \hspace{5ex} z\in\overline{U}_p,\\
 u_j^{\beta}(q)-u(q)-\frac{C}{\beta}(||q||^2-r^2)-\frac1\beta log(\frac1{2\beta})\geq& f_j(q)-u(q)-\frac{C}{\beta}(||q||^2-r^2)\geq 0+0=0.&
\end{align*}

This completes the proof of Claim $2$. \hfill $\Box$

\vspace{2ex}

\noindent {\it End of proof of Theorem \ref{The:local-approximation}.} With each constant $c_j$ as in Claim 1 we associate a sufficiently large constant $\beta(j)$ such that the sequence $\frac{log(c_j)}{\beta(j)}$ decreases to zero as $j\to\infty$. Then, $C>0$ being the constant given by Claim $2$, we consider the sequence of functions defined for $z\in\overline{U}_p$:
\begin{equation}\label{eqn:uj}
 \widehat{u}_j(z):=u_j^{\beta(j)}(z)+2C\frac{r^2}{\beta(j)}-\frac1{\beta(j)} log(\frac1{2\beta(j)}),  \hspace{5ex} j\geq 1.
\end{equation}
(These functions are $C^\infty$ and strictly $m$-psh with respect to $\omega$).

From Claim $2$, we get the first inequality below: $$u(z)\leq u_j^{\beta(j)}(z)+C\frac{r^2}{\beta(j)}-\frac1{\beta(j)} log(\frac1{2\beta(j)})< \widehat{u}_j(z),$$
\noindent and from Claim $1$ we get: $$\widehat{u}_j(z)\leq f_j(z)+2C\,\frac{r^2}{\beta(j)}+\frac{log(c_j)}{\beta(j)}-\frac1{\beta(j)} log(\frac1{2\beta(j)}).$$
Note that
\begin{equation}\label{eqn:fjtou}
 f_j(z)+{2}C\,\frac{r^2}{\beta(j)}+\frac{log(c_j)}{\beta(j)}-\frac1{\beta(j)} log(\frac1{2\beta(j)})\searrow u(z)\ {\rm over}\ \overline{U}_p.
\end{equation}
 Take now $u_1:=\hat{u}_1$ and $j=j(1)$ so large so that $f_j(z)+{2}C\frac{r^2}{\beta(j)}+\frac{log(c_j)}{\beta(j)}-\frac1{\beta(j)} log(\frac1{2\beta(j)})\leq u_1$ (which is possible since the convergence in (\ref{eqn:fjtou}) is over the {\it compact} set $\overline{U}_p$). Then define $u_2:=\widehat{u}_{j(1)}$, which is, by construction, less than $u_1$. Take now $j(2)>\beta(j)$ such that $f_j(z)+{2}C\frac{r^2}{\beta(j)}+\frac{log(c_j)}{\beta(j)}-\frac1{\beta(j)} log(\frac1{2\beta(j)})\leq u_2$ and proceed in the same fashion to construct the decreasing sequence $\lbrace u_j\rbrace_{j=1}^{\infty}$. This completes the proof of Theorem \ref{The:local-approximation}.

\end{proof}


\section{Appendix}\label{section:Appendix} The goal of this section is to spell out the proof of Theorem \ref{Thm:Dirichlet-Fm}, needed for the local smoothing of $m$-subharmonic functions.

  \begin{The}
   Let $U$ be a sufficiently small coordinate ball in a coordinate chart of $(X,\omega)$. Let $f$ be a smooth function in a neighbourhood of $U$. Finally, let
   $$G: \overline U\times\mathbb R\ni (z,t)\longmapsto G(z,t)\in \mathbb R$$
   be a $C^\infty$ positive function satisfying $\frac{\partial G}{\partial t}(z,t)>0.$
   Then, the Dirichlet problem:
   \begin{equation}\label{thm:local_Dirichlet_problem}
    \begin{cases}
      u\in C^{\infty}(\overline{U});\\
      F_m[i\partial\bar{\partial}u](z)=G(z,u(z));\\
      u|_{\partial U}=f|_{\partial U}
     \end{cases}
   \end{equation}
admits a unique $m$-subharmonic solution.
  \end{The}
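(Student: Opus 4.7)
The plan is the classical continuity method combined with a priori $C^{2,\alpha}(\overline{U})$ estimates and Evans--Krylov / Schauder regularity. Consider the one-parameter family
\begin{equation*}
F_m[i\partial\bar\partial u_t] = t\,G(z,u_t) + (1-t)\,F_m[i\partial\bar\partial h],\qquad u_t|_{\partial U} = f|_{\partial U},
\end{equation*}
where $h$ is an explicit smooth strictly $m$-subharmonic extension of $f|_{\partial U}$ to a neighbourhood of $\overline{U}$, e.g.\ $h(z) = f(z) + A(|z|^2 - r^2)$ for $A$ large; here one uses that $U$ is a small ball, so that $i\partial\bar\partial |z|^2 \geq c\,\omega$ on $\overline{U}$ for some $c>0$. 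The set $S\subset[0,1]$ of parameters for which a solution $u_t \in C^{2,\alpha}(\overline U)$ exists contains $0$. Openness of $S$ follows from the implicit function theorem: the linearisation of $F_m$ at $u_t$ is a second-order elliptic operator (since $i\partial\bar\partial u_t >_{m,\,\omega} 0$), and the hypothesis $\partial_t G > 0$ makes the associated linear Dirichlet problem uniquely solvable. Closedness of $S$ reduces to uniform a priori bounds in $C^{2,\alpha}(\overline U)$.

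\textbf{$C^0$ and $C^1$ estimates.} The $C^0$ bound follows from the maximum principle together with the strict monotonicity $\partial_t G > 0$: at an interior maximum of $u_t - f$ one has $i\partial\bar\partial u_t \leq i\partial\bar\partial f$, which by monotonicity of $F_m$ on $m$-positive matrices gives an upper bound on $G(z, u_t)$, hence on $u_t$. A lower bound is obtained from $h$, which (upon enlarging $A$) serves as a subbarrier. Boundary gradient estimates come from the standard upper and lower barriers $f \pm K(|z|^2 - r^2)$, which on a sufficiently small ball are respectively a supersolution and a subsolution for all $K$ large. The interior gradient bound then follows from a Bernstein-type maximum principle applied to $|\nabla u_t|^2 + A\,u_t^2$ for a suitably large $A$.

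\textbf{The $C^2$ estimate (the main obstacle).} This is the principal difficulty, especially at the boundary and in the Hermitian (non-K\"ahler) setting. For the interior, one adapts the Hou--Ma--Wu / Dinew--Ko\l odziej argument to the operator $F_m$: apply the linearised operator to a test quantity of the form $Q := \log\lambda_{\max}(i\partial\bar\partial u_t) + \varphi(|\nabla u_t|^2) + A u_t$ with a suitable convex $\varphi$ and large $A$; the concavity of $\log F_m$ on $m$-positive matrices is what allows one to absorb the bad third-order terms, while the torsion-type contributions coming from $\omega$ (which do not vanish, since $\omega$ is not assumed K\"ahler) are controlled using the already-established $C^1$ bound and, if necessary, by further shrinking $U$. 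For boundary $C^2$, the tangential-tangential and mixed tangential-normal second derivatives are handled via the boundary condition and the barriers constructed above, while the doubly normal component is controlled using the equation itself at the boundary together with uniform ellipticity along the family; this is the Caffarelli--Nirenberg--Spruck scheme as adapted in [GGQ22] and [Do23] to the Hermitian $m$-Hessian setting. With $\|u_t\|_{C^2(\overline U)}$ uniformly bounded, the equation becomes uniformly elliptic along the family, and Evans--Krylov (together with its boundary counterpart due to Krylov) delivers a uniform $C^{2,\alpha}$ estimate; Schauder bootstrap then yields $C^\infty$ regularity, closing the continuity method.

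\textbf{Uniqueness.} Suppose $u_1, u_2$ are two solutions and that $u_1 - u_2$ attains its maximum on $\overline U$ at a point $p$. If $p \in \partial U$ the boundary condition gives $u_1 \leq u_2$ throughout $\overline U$. If $p$ is an interior point, then $i\partial\bar\partial(u_1 - u_2)(p) \leq 0$, so by monotonicity of $F_m$ on $m$-positive matrices one gets $F_m[i\partial\bar\partial u_1](p) \leq F_m[i\partial\bar\partial u_2](p)$, i.e.\ $G(p, u_1(p)) \leq G(p, u_2(p))$; the hypothesis $\partial_t G > 0$ then forces $u_1(p) \leq u_2(p)$, and hence $u_1 \leq u_2$ on $\overline U$. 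The reverse inequality follows symmetrically, giving $u_1 = u_2$.
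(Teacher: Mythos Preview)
Your outline matches the paper's proof in overall architecture: continuity method along the same path (with $h=f+A(|z|^2-r^2)$ as the $t=0$ solution and subsolution), openness via the implicit function theorem, closedness via $C^0$, $C^1$, interior $C^2$, boundary $C^2$, then Evans--Krylov and bootstrap. Uniqueness is argued identically.

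Two places in your sketch are understated relative to what the paper actually has to do, and your wording at those points is slightly misleading. First, the interior gradient estimate: the naive Bernstein quantity $|\nabla u|^2+Au^2$ is not what the paper uses, and in the Hermitian (non-K\"ahler) setting it is not clear that it suffices to absorb the torsion terms coming from derivatives of $g^{j\bar k}$. The paper instead follows [GN18] and works with $H=\log(\|\nabla u\|_\omega^2)+\psi(u+v)$, where $v=N(\sup\|z\|_g^2-\|z\|_g^2)$ and $\psi$ is chosen so that $\psi''=2(\psi')^2$; the extra $v$-term, together with shrinking the radius $r$, is what produces the crucial large positive multiple of $\mathcal F=\sum_p F_m^{p\bar p}$ needed to dominate the curvature/torsion contributions. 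Second, and more substantively, your description of the doubly-normal boundary estimate as following from ``the equation itself at the boundary together with uniform ellipticity along the family'' is circular: uniform ellipticity is exactly what the $C^2$ bound is meant to establish. The actual mechanism (which is the heart of the appendix, following [Do23] and [CP22]) is to first prove a \emph{lower} bound on the product $\prod_{|I|=m,\,i_m<n}(u_{i_1\bar i_1}+\cdots+u_{i_m\bar i_m})$ of the purely tangential eigenvalue-sums at the boundary point; this requires a delicate barrier $\Psi$ built from the subsolution $\underline u$ and a careful analysis of the set $P_m^{n-1}$ via a parameter $t_0$. Only once that lower bound is in hand does the equation $F_m[i\partial\bar\partial u]=G$, rewritten through the derivation $D_A$ on $\Lambda^m\mathbb C^n$, yield the upper bound on $u_{n\bar n}$. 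Since you cite [Do23] for this step you are pointing at the right source, but be aware that ``use the equation plus ellipticity'' does not capture the argument.
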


\begin{proof}
 The uniqueness is an easy consequence of the maximum principle.

 To show existence, we shall employ the standard continuity method. Let $r$ be the radius of the ball and let $C=C(\omega,f)>0$ be a constant such that $C(||z||^2-r^2)+f$ is strictly $m$-subharmonic near $\overline{U}$. Consider the equations:
 \begin{equation}\label{eqn:Dirichlet-path}
  (*_t)\ \ \begin{cases}
          u_t\in C^{\infty}(\overline{U});\\
      F_m[i\partial\bar{\partial}u_t](z)=tG(z,u_t(z))+(1-t)F_m[i\partial\bar{\partial}\Big(C(||z||^2-r^2)+f\Big)];\\
      u_t|_{\partial U}=f|_{\partial U}.
         \end{cases}
 \end{equation}
It is clear that $u_0(z):=C(||z||^2-r^2)+f$ solves $(*_0)$. It suffices to show that the set
$$A:=\lbrace t\in[0,1]\ |\ (*_t)\ {\rm is\ solvable}\rbrace$$
is open and closed.

\vspace{1ex}

{\bf Openness}: Assume $(*_{t_0})$ is smoothly solvable for some $t_0$. The linearisation operator $L_{t_0}$ for $(*_{t_0})$ is then a uniformly elliptic linear second-order operator without lower-order terms. Standard elliptic theory ensures unique solvability for the problem:
$$\begin{cases}
   L_{t_0}(v(z))=H(z,v(z));\\
   v|_{\partial U}=f|_{\partial U}
  \end{cases}
$$
for all $H$ sufficiently close to $L_{t_0}(u_{t_0})$ in $C^{\alpha}$ norm, $\alpha\in(0,1)$. By the implicit function theorem, $(*_{t})$ is then solvable for $t$ sufficiently close to $t_0$ and the solution is $C^{2,\alpha}$. As the right-hand side of (\ref{eqn:Dirichlet-path}) is then $C^{2,\alpha}$, $u_t$ must be $C^{4,\alpha}$ by elliptic regularity. By bootstrapping, $u_t$ is in fact $C^{\infty}$.
\vspace{1ex}

{\bf Closedness}: As is well known - see [Yau78], [GN15], [ChX25], [GN18] or [Do23], it suffices to prove estimates on the $C^2$-norm of the solution to the problem $(*_t)$. In what follows, we follow closely the argument of [GN18].

Our approach relies crucially on the following concept from linear algebra: if $g$ and $g_0$ are Hermitian inner products on a finite-dimensional $\C$-vector space $V$ in which a basis has been fixed, a matrix $A$ is said to be $g$-Hermitian if, for any vectors $X,\ Y\in V$, we have:
\begin{equation}\label{eqn:gHermitian}
g(AX,Y)=g(X,AY).
\end{equation}
It is easily checked that a $g$-Hermitian matrix $A$ can be realised as $(g)^{-1}\tilde{A}$ for a unique $g_0$-Hermitian matrix $\tilde{A}$, where $(g)^{-1}$ denotes the inverse of the transpose of the matrix $(g_{p\bar{q}})$.


Abusing the notation slightly, we shall define the action of $F_m[A]$ on a $g$-Hermitian matrix $A$ by identifying $A=(g)^{-1}(\tilde{a}_{j\bar{k}})$ with its associated real $(1,1)$ form $\tilde{a}:=i\sum_{j,k=1}^n\tilde{a}_{j\bar{k}}dz_j\wedge d\bar{z}_k$ and putting $F_m[A]:=F_m[\tilde{a}]$.

We shall make heavy use of the derivatives of $F_m$ at a diagonal Hermitian matrix $A$. We recall the following formula, whose proof consists in a standard computation:

\begin{Lem}\label{lem:F_m}
 Let $A$ be a diagonal Hermitian matrix with eigenvalues (with respect to $Id$) $\lambda_j$,\ $j=1,\cdots,n$. Then

 $$F^{p\bar{q}}_m[A]:=\frac{\partial F_m[A]}{\partial{a_{p\bar{q}}}}=\begin{cases}
                       0\ \ {\rm if}\ p\neq q;\\
                       \frac1{\binom{n}{m}}F_m[A]\sum_{p\in{\lbrace i_1,\cdots ,i_m}\rbrace}\frac{1}{\lambda_{i_1}+\cdots+\lambda_{i_m}}\ {\rm if}\ p=q.
                      \end{cases}
$$
\end{Lem}

A simple consequence of the above formula is the following observation:
\begin{Lem}\label{lem:ZZ}
 Let $A$ be a diagonal Hermitian matrix with eigenvalues $\lambda_j$,\ $j=1,\cdots,n$. Then, for a universal constant $\nu$ depending only on $n$ and $m$, we have:
 $$\Pi_{p=1}^nF^{p\bar{p}}_m[A]\geq \nu.$$
\end{Lem}
\begin{proof}
This follows from the application of AM-GM inequality.
\end{proof}

Next, we establish the required a priori bounds on the solution.
\vspace{1ex}

{\bf Step 1: uniform estimate}. As the solution $u$ is, in particular, subharmonic we have
$$sup_{\overline{U}}u=sup_{\partial{U}}u=sup_{\partial{U}}f.$$
Monotonicity of $G$ then implies that $F_m[i\partial\bar{\partial}u](z)\leq G(z,max_{\partial U}f)\leq C\leq F[i\partial\bar{\partial}\big(a(||z||^2-r^2)+f\big)](z)$
once the positive constant $a$ is taken to be sufficiently large. The maximum principle then provides the lower bound of $u$ by the infimum of $\big(a(||z||^2-r^2)+f\big)(z)$. Note also that, increasing $a$ further if necessary, $-a(||z||^2-r^2)+f$ would become a superharmonic function.

\vspace{1ex}

{\bf Step 2: gradient estimate}.

Note that the inequalities
$$\big(a(||z||^2-r^2)+f\big)(z)\leq u(z)\leq \big(-a(||z||^2-r^2)+f\big)(z)$$
immediately yield a gradient bound on the boundary of $U$. Hence we are left with proving the bound on the gradient in the interior of $\overline{U}$.

Let $\omega=i\sum_{j,k=1}^ng_{j\bar{k}}dz_j\wedge d\bar{z}_k$ around $\overline{U}$. We have

\begin{equation}\label{eqn:c1}
 \frac{\delta_{jk}}{C_1}\leq g_{j\bar{k}}\leq C_1\delta_{jk}
\end{equation}
for a constant $C_1>0$ depending only on $\omega$. Notice that, choosing $r$ sufficiently small, we can arrange for $||z||_g^2:=g_{j\bar{k}}(z)z_j\bar{z}_k$ to have the following property (cf. [GN18]):
 \begin{equation}\label{eqn:normz}
 \forall p\in\lbrace1,\cdots, n\rbrace,\ \ \ \frac{\partial^2}{\partial z_p\partial\bar{z}_p}(||z||_g^2)=g_{p\bar{p}}(z)+O(|z|)\geq  g_{p\bar{p}}/2.
\end{equation}

Following [GN18], we consider the function
\begin{equation}\label{eqn:Hfunction}
 H(z):=log(||\nabla u||_{\omega}^2(z))+\psi(u(z)+v(z)),
\end{equation}
where $||\nabla u||_{\omega}^2:=\Lambda_{\omega}(i\partial u \wedge \bar{\partial} u)$ which in local coordinates can be written as $g^{j\bar{k}}u_ju_{\bar{k}}$ with $u_j:=\frac{\partial u}{\partial z_j}$. Furthermore $v(z):=N(sup_{\overline{U}}||z||_g^2-||z||_g^2)$ for a constant $N>0$ to be chosen later on. Finally, let
\begin{equation}\label{eqn:psi}
\psi(t):=-\frac12log\bigg(1+\frac{t}{L+NC_1r^2}\bigg),
\end{equation}
where $L:=1+sup_{U}|u|$. Note that
$\psi$ is chosen such that $\psi'<0$ and $\psi''=2\psi'^2$.

\vspace{1ex}

Suppose now that $H$ achieves its maximum at an interior point $z_0$. After a complex linear change of coordinates, we can arrange for $g_{j\overline{k}}$ to be the identity matrix at $z_0$ and for $(u)_{j\overline{k}}$ to be diagonal there. Recall the following standard formula and inequality for the first and second derivatives of $H$ at $z_0$: for every $p\in\{1,\dots,n\}$, one has: \begin{equation}\label{eqn:H_p} 0 = H_{p}(z_0)=\frac{g^{j\bar{k}}_pu_ju_{\bar{k}}+u_{jp}u_{\bar{j}}+u_pu_{p\bar{p}}}{||\nabla u||_{\omega}^2}+\psi'(u_p+v_p);
\end{equation}
\begin{align*}
 &0\geq H_{p\bar{p}}\geq \frac{g^{j\bar{k}}_{p\bar{p}}u_ju_{\bar{k}}+2\Re\Big(g^{j\bar{k}}_pu_ju_{\bar{p}\bar{k}}\Big)+2\Re\Big(g^{p\bar{k}}_pu_{p\bar{p}}u_{\bar{k}}\Big)+2\Re\Big(u_{jp\bar{p}}u_{\bar{j}}\Big)+|u_{jp}|^2+u_{p\bar{p}}^2}{||\nabla u||_{\omega}^2}\\
 &-|\frac{g^{j\bar{k}}_pu_ju_{\bar{k}}+u_{jp}u_{\bar{j}}+u_pu_{p\bar{p}}}{||\nabla u||_{\omega}^2}|^2+\psi'(u_{p\bar{p}}+v_{p\bar{p}})+\psi''|u_p+v_p|^2.
\end{align*}

We now estimate $\sum_pF_m^{p\bar{p}}H_{p\bar{p}}$ from below at $z_0$. Here and afterwards, $F_m^{p\bar{p}}$ denotes $\frac{\partial F_m[D^2_{\mathbb C}u]}{\partial{a_{p\bar{q}}}}(z_0)$, with $D^2_{\mathbb C}u$ denoting the complex Hessian of $u$. To this end, we exploit the equation
$$\sum_pF_m^{p\bar{p}}u_{p\bar{p}k}=G(z,u(z))_k$$
(valid at $z_0$) and (\ref{eqn:H_p}) to simplify the second inequality below:
\begin{equation}\label{eqn:initial_simplification}
 0\geq \sum_pF_m^{p\bar{p}}H_{p\bar{p}}\geq
\end{equation}
\begin{align*}
 &\frac{F_m^{p\bar{p}}g^{j\bar{k}}_{p\bar{p}}u_ju_{\bar{k}}+2F_m^{p\bar{p}}\Re\Big(g^{j\bar{k}}_pu_ju_{\bar{p}\bar{k}}\Big)+2F_m^{p\bar{p}}\Re\Big(g^{p\bar{k}}_pu_{p\bar{p}}u_{\bar{k}}\Big)+2\Re\Big(G_ju_{\bar{j}}\Big)+F_m^{p\bar{p}}|u_{jp}|^2+F_m^{p\bar{p}}u_{p\bar{p}}^2}{||\nabla u||_{\omega}^2}\\
 &+\Big(\psi''-(\psi')^2\Big)F_m^{p\bar{p}}|u_p+v_p|^2+\psi'\Big(G+\sum_pF_m^{p\bar{p}}v_{p\bar{p}}\Big)=:A+B+C.
\end{align*}
$\bullet$ We start by establishing a lower bound for $A$. Note that for a geometric constant $R$, one has:
\begin{equation}\label{eqn:R}
 \frac{F_m^{p\bar{p}}g^{j\bar{k}}_{p\bar{p}}u_ju_{\bar{k}}}{||\nabla u||_{\omega}^2}\geq-R\frac{F_m^{p\bar{p}}||\nabla u||_{\omega}^2}{||\nabla u||_{\omega}^2}\geq -R\mathcal F,
\end{equation}
where we set $\mathcal F:=\sum_pF_m^{p\bar{p}}$ at $z_0$. We also remark that the constant $R$ is related to the curvature bound of $X$ at $z_0$.

Analogously, for a geometric constant $T>0$, one has: \begin{equation}\label{eqn:T1grad}
 \frac{2F_m^{p\bar{p}}\Re\Big(g^{j\bar{k}}_pu_ju_{\bar{p}\bar{k}}\Big)}{||\nabla u||_{\omega}^2}\geq -\frac{F_m^{p\bar{p}}\sum_k|u_{kp}|^2}{||\nabla u||_{\omega}^2}-T\frac{F_m^{p\bar{p}}||\nabla u||_{\omega}^2}{||\nabla u||_{\omega}^2}\geq -\frac{F_m^{p\bar{p}}\sum_k|u_{kp}|^2}{||\nabla u||_{\omega}^2}-T\mathcal F
\end{equation}
and
\begin{equation}\label{eqn:T2grad}
 \frac{2F_m^{p\bar{p}}\Re\Big(g^{p\bar{k}}_pu_{p\bar{p}}u_{\bar{k}}\Big)}{||\nabla u||_{\omega}^2}\geq -\frac{F_m^{p\bar{p}}u_{p\bar{p}}^2}{||\nabla u||_{\omega}^2}-T\frac{F_m^{p\bar{p}}||\nabla u||_{\omega}^2}{||\nabla u||_{\omega}^2}\geq-\frac{F_m^{p\bar{p}}u_{p\bar{p}}^2}{||\nabla u||_{\omega}^2}-T\mathcal F.
\end{equation}

Finally note that, assuming $||\nabla u||_{\omega}^2\geq1$, we have:

\begin{equation}\label{eqn:G_j}
 \frac{2\Re\Big(G_ju_{\bar{j}}\Big)}{||\nabla u||_{\omega}^2}\geq -C_2
\end{equation}
for a constant $C_2$ depending only on the gradient bound of $G$, which we already control.

Collecting (\ref{eqn:R}), (\ref{eqn:T1grad}), (\ref{eqn:T2grad}) and (\ref{eqn:G_j}) and plugging them into (\ref{eqn:initial_simplification}), we end up with

\begin{equation}\label{eqn:estimate_for_A}
A\geq -(R+2T)\mathcal F-C_2.
\end{equation}

$\bullet$ Next, we deal with the term $B$. Recall that the choice of $\psi$ implies $\psi''=2\psi'^2$, hence
\begin{equation}\label{eqn:initial_B}
B\geq (\psi')^2 F_m^{p\bar{p}}|u_p+v_p|^2.
\end{equation}

$\bullet$ Turning to the term $C$, we note that $\psi'G$ is bounded below by a constant $C_3$ depending only on the supremum norm of $G$. Thus:
\begin{equation}\label{eqn:estimate_for_C}
 \psi'\Big(G+\sum_pF_m^{p\bar{p}}v_{p\bar{p}}\Big)\geq-C_3+|\psi'|\frac{N}{2C_1}\mathcal F,
\end{equation}
where we utilised (\ref{eqn:c1}) and (\ref{eqn:normz}).

\vspace{2ex}

Finally, putting together (\ref{eqn:estimate_for_A}), (\ref{eqn:initial_B}) and (\ref{eqn:estimate_for_C}), we get:
\begin{equation}\label{eqn:after_simplification}
0\geq \bigg(|\psi'|\frac{N}{2C_1}-R-2T\bigg)\mathcal F-C_2-C_3+(\psi')^2 F_m^{p\bar{p}}|u_p+v_p|^2.
\end{equation}

Recall now that, from the very definition of $\psi$ and $v$, we have: $u+v\in[0,L+NC_1r^2]$. Hence:
$$|\psi'\circ(u+v)|\geq\frac1{4(L+NC_1r^2)}.$$

Hence:
\begin{equation}\label{eqn:Term_in_front_of_F}
 \bigg(|\psi'|\frac{N}{2C_1}-R-2T\bigg)\geq\bigg(\frac{N}{8C_1(L+NC_1r^2)}-R-2T\bigg)\geq 1
\end{equation}
provided we first choose $r>0$ so small that
$$9C_1^2(R+2T+1)r^2<1$$
and then $N$ sufficiently large, depending on $R, T, C_1$ and $L$.

Then, using (\ref{eqn:Term_in_front_of_F}), (\ref{eqn:after_simplification}) leads to
\begin{equation}\label{eqn:What_we_want}
 C_4\geq \mathcal F+C_5F_m^{p\bar{p}}|u_p+v_p|^2
\end{equation}
for some uniform constants $C_4$ and $C_5$ depending on $L$ and the geometry of $(X,\omega)$.

The bound $\mathcal F\leq C_4$ coupled with Lemma \ref{lem:ZZ} yields, for all $p\in\{1,\cdots,n\}$:
$F_m^{p\bar{p}}\geq C_6>0$ for some universal constant $C_6$. Then,
$C_4\geq C_5F_m^{p\bar{p}}|u_p+v_p|^2$ immediately implies $|u_p+v_p|^2\leq C_7$. Finally, as $||\nabla v||$ is bounded, we obtain:
\begin{equation}\label{eqn:nabla_u}
 ||\nabla u||\leq C_8.
\end{equation}

\vspace{1ex}

{\bf Step 3: second order estimate - reduction to the boundary}.

Similarly to the previous step, we prove a bound  on the complex Hessian of $u$ at an interior point. To this end, we consider the quantity:
\begin{equation}\label{eqn:tilde_S}
\tilde{S}(z,\xi)=log(1+u_{i\bar{j}}\xi^i\bar{\xi}^j)+\varphi(||\nabla u||^2_{\omega})+\psi(u+v),
\end{equation}
where $\xi=(\xi^1,\cdots,\xi^n)$ is a unit vector from $T^{1,0}_zX$ (the quantity obviously does not depend on the choice of coordinates), $v$ is as in the previous proof, $\varphi(t):=-\frac12log(1-\frac{t}{2K})$ for $K:=sup||\nabla u||^2_{\omega}+1$ and $\psi(t)=-\frac16log(1+\frac{t}{2L})$ with $L:=1+sup|u|$.

Suppose $\tilde{S}$ achieves its maximum at a point $z_0$ in the direction $\xi_0$. Then, applying a rotation around $z_0$ if necessary, we construct coordinates $(z_1,\cdots,z_n)$ such that $u_{i\bar{j}}(z_0)$ is diagonal, $g_{i\bar{j}}(z_0)=\delta_{ij}$ and, moreover, $\xi_0$ can be extended as $g_{1\bar{1}}^{-1/2}\frac{\partial}{\partial z_1}$ in a neighbourhood. Then, the modified function:
\begin{equation}\label{eqn:notilde_S}
{S}(z)=log(1+g^{1\bar{1}}u_{1\bar{1}})+\varphi(||\nabla u||^2_{\omega})+\psi(u+v),
\end{equation}
still has a local maximum at $z_0$. Differentiating, we obtain the equality:
\begin{equation}\label{eqn:S_p}
0=S(z_0)_p=\frac{g^{1\bar{1}}_pu_{1\bar{1}}+g^{1\bar{1}}u_{1\bar{1}p}}{1+u_{1\bar{1}}} +\varphi'\big(||\nabla u||^2_{\omega}\big)_p+\psi'\big(u_p+v_p\big).
\end{equation}
Differentiating once more at $z_0$ results in
\begin{equation}\label{eqn:S_pbarp}
0\geq F^{p\bar{p}}_m\frac{g^{1\bar{1}}_{p\bar{p}}u_{1\bar{1}}+2\Re\big(g^{1\bar{1}}_{p}u_{1\bar{1}\bar{p}}\big)+u_{1\bar{1}p\bar{p}}}{1+u_{1\bar{1}}}-F^{p\bar{p}}_m|\frac{g^{1\bar{1}}_pu_{1\bar{1}}+g^{1\bar{1}}u_{1\bar{1}p}}{1+u_{1\bar{1}}}|^2
\end{equation}
$$+\varphi''F^{p\bar{p}}_m|\big(||\nabla u||^2_{\omega}\big)_{p}|^2+\varphi'F^{p\bar{p}}_m\big(||\nabla u||^2_{\omega}\big)_{p\bar{p}}$$
$$+\psi''F^{p\bar{p}}_m
|u_p+v_p|^2+\psi'F^{p\bar{p}}_m\big(u_{p\bar{p}}+v_{p\bar{p}}\big)=:I+II+III+IV+V+VI.$$

\vspace{1ex}

$\bullet$ For the term $I$, we observe that (recall $g_{i\bar{j}}(z_0)=\delta_{ij}$) rewriting $2\Re\big(g^{1\bar{1}}_{p}u_{1\bar{1}\bar{p}}\big)$ as
$$2\Re\big(g^{1\bar{1}}_{p}[u_{1\bar{1}\bar{p}}+g^{1\bar{1}}_{p}u_{1\bar{1}}-g^{1\bar{1}}_{p}u_{1\bar{1}}]\big),$$
reordering and using the inequality $2\Re(a\bar{b})\geq - \frac12|a|^2+2|b|^2$, we end up with the bound:
\begin{equation}\label{eqn:I}
I\geq -R\mathcal F-\frac12F^{p\bar{p}}_m|\frac{g^{1\bar{1}}_pu_{1\bar{1}}+g^{1\bar{1}}u_{1\bar{1}p}}{1+u_{1\bar{1}}}|^2+F^{p\bar{p}}_m\frac{u_{1\bar{1}p\bar{p}}}{1+u_{1\bar{1}}}
\end{equation}
for a constant $R$ depending only on the geometry of $(X,\omega)$.

As $F_m$ is a concave operator, we have (assuming $u_{1\bar{1}}(z_0)\geq 1$):
$$F^{p\bar{p}}_m{u_{1\bar{1}p\bar{p}}}=G_{1\bar{1}}+F_m^{r\bar{s},q\bar{l}}u_{r\bar{s}1}u_{q\bar{l}\bar{1}}\geq -C_9u_{1\bar{1}}$$
for a constant $C_9$ depending on $G$. Here, $F_m^{r\bar{s},q\bar{l}}$ denotes the second derivative of $F_m$.

Thus, we can estimate:
\begin{equation}\label{eqn:IplusII}
 I+II\geq -R\mathcal F-\frac32F^{p\bar{p}}_m|\frac{g^{1\bar{1}}_pu_{1\bar{1}}+g^{1\bar{1}}u_{1\bar{1}p}}{1+u_{1\bar{1}}}|^2-C_9
\end{equation}
$$\geq -R\mathcal F-C_9-\frac32F^{p\bar{p}}_m|\varphi'\big(||\nabla u||^2_{\omega}\big)_p+\psi'\big(u_p+v_p\big)|$$
$$\geq -R\mathcal F-C_9-2(\varphi')^2F^{p\bar{p}}_m|\big(||\nabla u||^2_{\omega}\big)_p|^2-6(\psi')^2F^{p\bar{p}}_m|\big(u_p+v_p\big)|^2,$$
where we used (\ref{eqn:S_p}) to get the penultimate inequality.

The very choices of $\varphi$ and $\psi$ lead to the equalities: $\varphi''=2(\phi')^2$ and $\psi''=6(\psi')^2$. These, coupled with (\ref{eqn:IplusII}), imply:
\begin{equation}\label{eqnIplusIIplusIIIplusV}
I+II+III+V\geq -R\mathcal F-C_9.
\end{equation}

$\bullet$ Next, we turn our attention to the term $IV$. Just as in (\ref{eqn:H_p}), we compute at $z_0$:
\begin{equation}\label{eqn:IV}
IV=\varphi'\Big[F_m^{p\bar{p}}g^{j\bar{k}}_{p\bar{p}}u_ju_{\bar{k}}+2F_m^{p\bar{p}}\Re\Big(g^{j\bar{k}}_pu_ju_{\bar{p}\bar{k}}\Big)+2F_m^{p\bar{p}}\Re\Big(g^{p\bar{k}}_pu_{p\bar{p}}u_{\bar{k}}\Big)+2\Re\Big(G_ju_{\bar{j}}\Big)\big]
\end{equation}
$$+\varphi'\Big[F_m^{p\bar{p}}|u_{jp}|^2+F_m^{p\bar{p}}u_{p\bar{p}}^2\Big].$$
Estimating similarly to (\ref{eqn:R}), (\ref{eqn:T1}) and (\ref{eqn:T2}) and exploiting the bound on $||\nabla u||$, we end up with
\begin{equation}\label{eqn:IVfinal}
IV\geq -\tilde{R}\mathcal F-C_{10}+\frac{\varphi'}2F^{p\bar{p}}_mu_{p\bar{p}}^2,
\end{equation}
for constants $\tilde{R}$ and $C_{10}$ depending on $K, L$ and the geometry of $(X,\omega)$.

\vspace{2ex}

Then, putting together (\ref{eqnIplusIIplusIIIplusV}), (\ref{eqn:IVfinal}) and (\ref{eqn:S_pbarp}), we end up with:
\begin{equation}\label{eqn:Sfinal}
 0\geq -\bar{R}\mathcal F-C_{11}+\psi'F^{p\bar{p}}_m\big(u_{p\bar{p}}+v_{p\bar{p}}\big)+\frac{\varphi'}2F^{p\bar{p}}_mu_{p\bar{p}}^2
\end{equation}
$$=-\bar{R}\mathcal F-C_{11}+\psi'G+|\psi'|F^{p\bar{p}}_mv_{p\bar{p}}+\frac{\varphi'}2F^{p\bar{p}}_mu_{p\bar{p}}^2,$$
where $\bar{R}$ and $C_{11}$ are constants depending on $(X,\omega), K$ and $L$.

To finish the argument, we apply the same idea as in the gradient bound: as $u+v\in[0,L+NC_1r^2]$, we have $|\psi'|\geq\frac1{12(L+NC_1r^2)}$ and choosing $r$ sufficiently small so that
$$24C_1^2(\bar{R}+1)r^2<1$$
allows for a choice of $N$ so that (\ref{eqn:Sfinal}) simplifies to
$$0\geq \mathcal F-C_{12}+\frac{\varphi'}2F^{p\bar{p}}_mu_{p\bar{p}}^2,$$
for $C_{12}$ depending on $L,\ K$ and the geometry of $(X,\omega)$.

The upper bound on $\mathcal F$, coupled with Lemma \ref{lem:ZZ}, leads to uniform upper and lower bounds on each $F^{p\bar{p}}_m$. This, in turn, suffices to bound $u_{1\bar{1}}$ (the largest eigenvalue) above.

\vspace{1ex}

{\bf Step 4: tangential-tangential and tangential-normal estimate on the boundary}.

\vspace{1ex}

As $u$ and $C(||z||^2-r^2)+f$ agree on the boundary, any tangential-tangential second order derivative is trivially bounded by the (first-order) normal one and the geometry of $\partial U$.

The argument for the bound on the mixed tangential-normal derivatives at the boundary is from [Do23, Section 3], where the barrier constructed by Collins and Picard in [CP22] was used. We state the result for the sake of completeness.

\begin{Prop}
Let $\chi$ be an $(n-1)$-semi-positive form on a neighbourhood of $U$ in $(X,\omega)$ and let $u$ solve the problem:
$$F_m[\chi+i\partial\bar{\partial} u](z)=g(z),\hspace{5ex} u|_{\partial U}=f|_{\partial U}.$$
Assume that there is a $C^2$ smooth $m$-subharmonic subsolution $\underline{u}$ solving the problem:
$$F_m[\chi+i\partial\bar{\partial} \underline{u}](z)\geq g(z),\hspace{5ex} \underline{u}|_{\partial U}=f|_{\partial U}.$$
Then, the tangential-normal second-order derivatives of $u$ can be bounded by a constant depending on $(X,\omega), \chi, ||u||_{C^1}, ||\underline{u}||_{C^2}, ||g||_{C^2}$ and $inf_{U} g$.
\end{Prop}

Note that, in our case, we can take $\chi=0$ and $\underline{u}:=f+C(||z||^2-r^2)$ with some $C>0$ depending on $f$. We remark that the result in [Do23] is stated for a solution-independent right-hand side, but once we have $||u||_{C^1}$ under control, $G(z,u(z))$ becomes uniformly positive and bounded, thus the same argument applies.

\vspace{1ex}

{\bf Step 5: normal-normal estimate}.

\vspace{1ex}

Once again, we follow the argument from [Do23], which itself relies heavily on [CP22].

Pick a point $p\in\partial U$ that we identify with the coordinate origin. As in [Do23], we observe that it is no loss of generality to assume that:
\begin{enumerate}
 \item $g_{j\bar{k}}(p)=\delta_{jk}$;
 \item the complex tangent space of $\partial U$ at $p$ is spanned by $\frac{\partial}{\partial z_j},\ j=1,\cdots,n-1$;
 \item $\frac{\partial}{\partial x_n}$ is the inner normal direction, where we use the standard notation $z_j:=x_j+iy_j$;
 \item The local defining function $\rho$ of $\partial U$ near $p$ satisfies $\rho(z)=-x_n+O(|z|^2)$;
 \item the matrix $(u_{j\bar{k}})$ has zero entries except for the diagonal and the last row and column, i.e.
 $$(u_{j\bar{k}})=\begin{pmatrix}
                   u_{1\bar{1}}&0&\cdots&0&u_{1\bar{n}}\\
                   0&u_{2\bar{2}}&\cdots&0&u_{2\bar{n}}\\
                   \vdots&\vdots&\ddots&\vdots&\vdots\\
                   0&0&\cdots&u_{n-1\overline{n-1}}&u_{n-1\bar{n}}\\
                   u_{n\bar{1}}&u_{n\bar{2}}&\cdots&u_{n\overline{n-1}}&u_{n\bar{n}}
                  \end{pmatrix}.
$$
\end{enumerate}
Recall that $F_m[i\partial\bar{\partial}u]:=(\Pi_{|I|=m}(\lambda_{i_1}+\cdots+\lambda_{i_m}))^{1/\binom{n}{m}}$, where $\lambda_j$'s are the eigenvalues of the matrix $(g^{s\bar{l}}u_{q\bar{l}})_{s,q=1}^n$, which at $p$ simplifies to the matrix $(u_{s\bar{q}})$.

In order to exploit this, we recall the following fact, essentially contained in [HL13]: for any Hermitian $n\times n$ matrix $A$, the derivation
$D_A:\Lambda^m\mathbb C^n\longmapsto\Lambda^m\mathbb C^n $ given by the formula
\begin{equation}\label{eqn:D_A}
 D_A(v_1\wedge v_1\cdots\wedge v_m):=(Av_1)\wedge v_2\wedge\cdots\wedge v_m+v_1\wedge(Av_2)\wedge\cdots\wedge v_m+\cdots+ v_1\wedge v_2\wedge\cdots\wedge(Av_m)
\end{equation}
(and extended linearly to indecomposable elements) is a linear map with eigenvalues being precisely the sums of $m$-tuples of eigenvalues of $A$. This immediately leads to the formula

\begin{equation}\label{eqn:FmfromD_A}
F_m[i\partial\bar{\partial}u]=(det(D_{g^{-1}(u_{j\bar{k}})}))^{1/\binom{n}{m}}.
\end{equation}
(note that $g^{-1}(u_{j\bar{k}})$ is Hermitian at $p$.)

Let us specialise to the case of the point $p$ and the basis of $\Lambda^m\mathbb C^n$ given by $\frac{\partial}{\partial z_{i_1}}\wedge\frac{\partial}{\partial z_{i_2}}\wedge\cdots\wedge\frac{\partial}{\partial z_{i_m}}$ for all $m$-tuples $1\leq i_1<i_2<\cdots<i_m\leq n$. Notice that, due to the special shape of $u_{j\bar{k}}(p)$, the entry of the $\binom{n}{m}\times\binom{n}{m}$ matrix $D_{g^{-1}(p)(u_{j\bar{k}}(p))}$ in the slot associated with the pair $(i_1,\cdots,i_m),\ (j_1,\cdots,j_m)$ equals:

$$\begin{cases}
   u_{i_1\overline{i_1}}+\cdots +u_{i_m\overline{i_m}}\ \ {\rm on\ the\ diagonal};\\
   0\ \ {\rm if}\ i_m\neq n\ {\rm or}\ j_m\neq n,
  \end{cases}
$$
and a linear combination of tangential-normal derivatives of $u$ at $p$ (which are already uniformly bounded by previous considerations).

Next, we rewrite (\ref{eqn:FmfromD_A}) in the following form:
\begin{equation}\label{eqn:finalFm}
 G=F_m[i\partial\bar{\partial}u](p)=(det \tilde{D}_{(u_{j\bar{k}}(p))})^{1/\binom{n}{m}}\Pi_{|I|=m,\ i_m=n}(u_{i_1\overline{i_1}}+\cdots +u_{i_m\overline{i_m}})^{1/\binom{n}{m}},
\end{equation}
where $\tilde{D}_{(u_{j\bar{k}}(p))}$ is the matrix  obtained from ${D}_{(u_{j\bar{k}}(p))}$ by dividing each row $(i_1,\cdots, i_m)$ with $i_m=n$ by $\sqrt{u_{i_1\overline{i_1}}+\cdots +u_{i_m\overline{i_m}}}$ and each column $(j_1,\cdots,j_m)$ with $j_m=n$ by $\sqrt{u_{j_1\overline{j_1}}+\cdots +u_{j_m\overline{j_m}}}$.

The core of the proof, just as in [Do23] is to prove the following claim:

\begin{Claim}\label{Claim:prod_uj_c0} There is a constant $c_0$ depending on $(X,\omega),\underline{u}$ and $G$ such that
\begin{equation*}\Pi_{|I|=m,\ i_m<n}(u_{i_1\overline{i_1}}+\cdots +u_{i_m\overline{i_m}})\geq c_0.
\end{equation*}

\end{Claim}

Indeed, assume the claim. Assume additionally that $u_{n\bar{n}}=:M$ is very large. Then, (\ref{eqn:finalFm}) can be rewritten as
$$G^{\binom{n}{m}}=(O(1)+M)^{\binom{n-1}{m-1}}det(\tilde{D}_{(u_{j\bar{k}}(p))})$$
with the off-diagonal entries of $\tilde{D}_{(u_{j\bar{k}}(p))}$ being either zero or of order $O(\frac1{\sqrt{M}})$ i.e. the matrix is {\it almost} diagonal. Additionally, diagonal entries are either $1$ or  of the form $u_{i_1\overline{i_1}}+\cdots +u_{i_m\overline{i_m}}$ with $i_m<n$. Thus:
$$det(\tilde{D}_{(u_{j\bar{k}}(p))})=\Pi_{|I|=m,\ i_m<n}(u_{i_1\overline{i_1}}+\cdots +u_{i_m\overline{i_m}})-O\bigg(\frac1{\sqrt{M}}\bigg)\geq\frac{c_0}2$$
for $M$ large enough. As $G$ is bounded above, such an equality cannot hold for arbitrarily large $M$.

Thus, one obtains an upper bound for $u_{n\bar{n}}(p)$ and hence on $u_{x_nx_n}(p)=u_{n\bar{n}}-u_{y_ny_n}$ as $\frac{\partial}{\partial y_n}$ is a real tangential direction. But $u$ is additionally subharmonic (with respect to $g$) and thus:
$$u_{x_nx_n}(p)\geq -\sum_{j=1}^{n-1}u_{x_jx_j}(p)-\sum_{k=1}^nu_{y_ky_k}(p)\geq -C.$$
This provides a lower bound for the normal-normal derivative. Hence, the estimate for the normal-normal derivative is reduced to the proof of Claim \ref{Claim:prod_uj_c0}.

\vspace{2ex}

\noindent {\it Proof of Claim \ref{Claim:prod_uj_c0}.} From now on the proof repeats the arguments from [Do23]. We provide the details for the sake of completeness.

 For every base point $z$ and every $k\geq m$, let us denote by $P_m^k(z)$ the set of $(k\times k)$-dimensional $g$-Hermitian matrices for which any sum of $m$ eigenvalues (with respect to $g(z)$) is non-negative. Equivalently (compare [HL13, Definition 2.1]), $A\in P_m^k$ iff the trace $tr_W(A)$ of $A$ restricted to any $m$-dimensional complex subspace $W$ is non-negative. Let us denote by $P_m^n(B)(z)$ the set of matrices in $P_m^n$ such that
\begin{equation}\label{eqn:PmnB}
 P_m^n(B)(z):=\lbrace A\in P_m^n(z)\ |\ F_m[A]\geq F_m[B]\rbrace.
\end{equation}

Recall that $\underline{u}$ is an explicit subsolution, so $\eta_0:=(u-\underline{u})_{x_n}(p)>0$ and $\eta_0$ is uniformly bounded.

For $j,k\in\lbrace 1,\cdots,n-1\rbrace$, we have:
\begin{equation}\label{eqn:uundelineu}
 (u-\underline{u})_{j\bar{k}}(p)=-(u-\underline{u})_{x_n}(p)\rho_{j\bar{k}}(p).
\end{equation}
In particular, for $t=1$, the matrix:
$$A_t:=(t\underline{u}_{j\bar{k}}(p)-(u-\underline{u})_{x_n}(p)\rho_{j\bar{k}}(p))_{j,k=1}^{n-1}$$
belongs to $P_m^{n-1}$ (as it is equal to $({u}_{j\bar{k}}(p))_{j,k=1}^{n-1}$). Denote by $t_0$ the smallest $t$ for which such a property holds. Note that

$$({u}_{j\bar{k}}(p))_{j,k=1}^{n-1}=(1-t_0)(\underline{u}_{j\bar{k}}(p))_{j,k=1}^{n-1}+A_{t_0}.$$
For any $m$-tuple $I=(i_1,\dots , i_m)$ with $i_j\in\lbrace1,\cdots,n-1\rbrace$, consider the subspace $W_I$ spanned by $\frac{\partial}{\partial z_{i_1}},\cdots,\frac{\partial}{\partial z_{i_m}}$. We have:
$$u_{i_1\overline{i_i}}+\cdots+u_{i_m\overline{i_m}}=tr_{W_I}((u_{j\bar{k}})_{j,k=1}^{n-1})=(1-t_0)tr_{W_I}((\underline{u}_{j\bar{k}})_{j,k=1}^{n-1})+tr_{W_I}A_{t_0}\geq (1-t_0)c_1>0$$
for a constant $c_1$ which depends only on $\underline{u}$ and is, hence, uniform. Therefore, the claim follows from a uniform lower bound for $(1-t_0)$.

Just as in [Do23], we consider the auxiliary functions:
$$D(z)=-\rho(z)+\tau|z|^2,$$
$$\Phi(z)=\underline{u}(z)-\frac{\eta_0}{\tau_0}\rho(z)+2\Re[\sum_{j=1}^{n-1}l_jz_j]\rho(z)+LD(z)^2,$$
$$\Psi(z)=\Phi(z)+\varepsilon(|z|^2-\frac{x_n}{C}),$$
all of which are defined in a small coordinate ball $B_{\delta}(0)$ centred at $p$ (identified with the coordinate origin) and of radius $\delta>0$. Here, $\tau>0$ and $\varepsilon>0$ are small constants, $L, C>1$ are large ones and $l_j\in\mathbb C$ are bounded numbers. All these quantities will chosen later on.

\vspace{2ex}

We now state a result analogous to Lemma 4.2 from [Do23]:

\begin{Lem}\label{Lem:Psimajorizesu}
For a specific choice of the aforementioned constants, one has:
$$\forall z\in U\cap B_{\delta}(0)\ \ u(z)\leq\Psi(z).$$
\end{Lem}

Assuming this lemma, the last part of the proof of Claim \ref{Claim:prod_uj_c0} proceeds as follows. Since
$$\Psi(0)=\Phi(0)=\underline{u}(0)+LD(0)^2=\underline{u}(0)=u(0),$$ Lemma \ref{Lem:Psimajorizesu} implies that $(\Psi-u)_{x_n}(0)\geq 0$ which, in turn, can be rewritten as
\begin{equation}\label{eqn:rewritenormalPsiminusu}
-\eta_0=-(u-\underline{u})_{x_n}(0)\geq\frac{\varepsilon}{C}+\frac{\eta_0}{t_0}\rho_{x_n}(0)=\frac{\varepsilon}{C}-\frac{\eta_0}{t_0},
\end{equation}
or, more concisely, as
$\frac{1-t_0}{t_0}\geq\frac{\varepsilon}{C\eta_0}$. The lower bound on $(1-t_0)$ follows immediately from this.

This completes the proof of Claim \ref{Claim:prod_uj_c0}.  \hfill $\Box$

\vspace{2ex}

Thus, we have come to the last part of the argument.

\vspace{2ex}

\noindent {\it Proof of Lemma \ref{Lem:Psimajorizesu}.} We refer the reader to [Do23, Lemma 4.2] for a more detailed exposition.

On $\partial B_{\delta}(0)\cap U$, we have: $$\Psi-u=\big[\underline{u}-u\big]+\big[\varepsilon(|z|^2-\frac{x_n}{C})-\frac{\eta_0}{\tau_0}\rho(z)+2\Re[\sum_{j=1}^{n-1}l_jz_j]\rho(z)\big]+LD(z)=:\alpha+\beta+\gamma.$$
Now, $\alpha$ is bounded below by $-C_{13}$, which depends on the uniform bounds of $u$ and $\underline{u}$. Meanwhile, $\beta$ is bounded by $-C_{14}$, which depends only on the $l_j$'s if we take $C$ larger than $\frac1\delta$. Finally, $\gamma\geq L\tau^2\delta^4$. Hence, the choice $L:=\frac{C_{13}+C_{14}}{\tau^2\delta^4}$ (or any larger constant) leads to $\Psi-u\geq 0$ over this part of the boundary.

On $B_\delta(0)\cap \partial U$, we have:
$\alpha=0$ and $\gamma=L\tau^2|z|^4$. Exploiting that $\rho(z)=-x_n+O(|z|^2)$, we get: $$\beta=\varepsilon\bigg(|z|^2-\frac{x_n}{C}\bigg)=\varepsilon\bigg(|z|^2-\frac{O(|z^2|)}{C}\bigg),$$ which is non-negative once $C$ has been chosen large enough. Hence:
$$\Psi-u\geq 0\ {\rm on}\ \partial(B_{\delta}(0)\cap U).$$

The inequality would extend to the interior once a  version of the maximum principle can be applied. To this end, we shall exploit Lemma 2.1 from [Do23]: it suffices to show that $(g^{-1}(\Psi_{j\bar{k}})_{j,k=1}^n)(z)$ does not belong to $P_m^n[(u_{j\bar{k}})_{j,k=1}^n(z)]$ for any $z$ lying in $B_{\delta}(0)\cap U$.

Observe that $$(g^{-1}(\Psi_{j\bar{k}})_{j,k=1}^n)(z)=(g^{-1}(\Phi_{j\bar{k}}+\varepsilon Id)_{j,k=1}^n)(z).$$
Since $F_m[i\partial\bar{\partial}u]=G$ is uniformly bounded below, the relation $(g^{-1}(\Psi_{j\bar{k}})_{j,k=1}^n)(z)\notin P_m^n[(u_{j\bar{k}})_{j,k=1}^n(z)]$ follows for sufficiently small $\varepsilon>0$ from the continuity of the eigenvalues with respect to the matrix entries once we know that $(g^{-1}(\Phi_{j\bar{k}})_{j,k=1}^n(z))$ does not belong to the interior of $P_m^n$.

To prove this, we assume that $\frac12\leq t_0\leq 1$ for any $z$ close to the origin and we construct an $m$-dimensional complex subspace $W_z$ on which $tr_{W_{z}}(g^{-1}(\Phi_{j\bar{k}})_{j,k=1}^n(z))\leq 0$.

As $A_{t_0}$ is at the boundary of $P_m^{n-1}$, there is such a complex subspace $W_0$ of $T_0\partial U$ such that
\begin{equation}\label{eqn:trAt0}
 0=tr_{W_0}A_{t_0}=t_0tr_{W_0}D^2_{\mathbb C}\Phi(0)=tr_{W_0}D^2_{\mathbb C}[t_0\underline{u}-\eta_0\rho].
\end{equation}
 (Note that the third and fourth terms in the definition of $\Phi$ have vanishing Levi forms at $T_0\partial U$). Let $\xi_1(0),\cdots,\xi_m(0)$ be an orthonormal frame with respect to $g(0)=Id$ that spans $W_0$ . Just as in [Do23], we extend these to local $g$-orthonormal vector fields $\xi_j(z)=\sum_{k=1}^n\xi_j^k(z)\frac{\partial}{\partial z_k}$ in $B_{\delta}(0)\cap U$ such that the $\xi_j(z)$ (with $j=1,\cdots,m$) span an $m$-dimensional complex subspace $W_z$ of the tangent space to the level set of $D(z)$. Observe the equality:
$$tr_{W_z}g^{-1}(z)D^2_{\mathbb C} \Phi(z):=\sum_{j=1}^m\sum_{p,q=1}^n\xi_j^p\overline{\xi_j^q}\Phi_{p\bar{q}}$$
$$=\sum_{j=1}^m\sum_{p,q=1}^n\xi_j^p\overline{\xi_j^q}(\big[\underline{u}(z)-\frac{\eta_0}{\tau_0}\rho(z)\big]_{p\bar{q}}+\big[2\Re[\sum_{s=1}^{n-1}l_sz_s]\rho(z)\big]_{p\bar{q}}+\big[LD(z)^2\big]_{p\bar{q}})$$
$$=:T_1+T_2+T_3.$$

As $T_1(0)=0$ and $T_1$ is a smooth real function depending on $\underline{u}$ and $\rho$, we have:
\begin{equation}\label{eqn:T1}
 T_1(z)=2\Re(\sum_{j=1}^nm_jz_j)+O(|z|^2)
\end{equation}
for some uniform constants $m_j\in\mathbb C$ under control.

To estimate $T_2$, we exploit the fact that the $\xi_j$'s are tangent to the level sets of $D(z)$ and hence
\begin{equation}\label{eqn:xiD}
 0=\xi_jD=-\xi_j^k\rho_k+\tau\xi_j^k\bar{z}_k.
\end{equation}

Taking (\ref{eqn:xiD}) into account, the computation of $T_2$ and the Taylor expansion at $0$ reveal that \begin{equation}\label{eqn:T2}
 T_2=2\Re[\sum_{s=1}^{n-1}l_sz_s]tr_{W_0}D^2_{\mathbb C}\rho(0)+\tau\sum_{j=1}^m\sum_{p,q=1}^n\xi_j^p(0)\overline{\xi_j^q}(0)(l_pz_q+\bar{l}_q\bar{z}_p)+O(|z|^2).
\end{equation}

Coupling now (\ref{eqn:T1}) with (\ref{eqn:T2}), we get:
$T_1+T_2=O(|z|^2)$ provided that the numbers $l_j\in\mathbb C$ satisfy the linear system of equations:
$$m_j+tr_{W_0}D^2_{\mathbb C}\rho(0)l_j+\tau\sum_{s=1}^{m}\sum_{p=1}^n\xi_s^p(0)\overline{\xi_s^j}(0)l_p=0.$$ Now, (\ref{eqn:trAt0}) implies that $tr_{W_0}D^2_{\mathbb C}\rho(0)=\frac{t_0}{\eta_0}tr_{W_0}D^2_{\mathbb C}\underline{u}(0)$, which is uniformly positive (i.e. bounded below by some constant $\theta>0$). Hence, this system is solvable for all sufficiently small $\tau$.

Fixing such a $\tau>0$ which additionally satisfies:
\begin{equation}\label{eqn:tau}
\tau\leq \frac{\theta}{2m}
\end{equation}
and the corresponding $l_j$'s, we finally observe that: $$T_3=2LD(z)\sum_{j=1}^m\sum_{p,q=1}^n\xi_j^p(z)\overline{\xi_j^q}(z)D_{p\bar{q}}(z).$$
(We have used, once again, that the $\xi_j$'s are tangential to the level sets of $D$.)

Moreover, at the origin, $\sum_{p,q=1}^n\xi_j^p(z)\overline{\xi_j^q}(z)D_{p\bar{q}}(z)$ equals
$-tr_{W_0}D^2_{\mathbb C}\rho+\tau\sum_{s=1}^m||\xi_s(0)||^2\leq -\frac{\theta}2$, where the last inequality follows from (\ref{eqn:tau}) and the orthonormality of the $\xi_j$'s. Hence, $$\sum_{p,q=1}^n\xi_j^p(z)\overline{\xi_j^q}(z)D_{p\bar{q}}(z)\leq-\frac{\theta}{4}$$ near the origin and thus
\begin{equation}\label{eqn:T4}
 T_3\leq \frac{-\tau\theta L}{2}|z|^2.
\end{equation}
Finally,
$$ tr_{W_z}D^2_{\mathbb C} \Phi(z)=T_1+T_2+T_3=O(|z|^2)-\frac{-\tau\theta L}{2}|z|^2\leq 0,$$
provided $L$ is taken large enough.

This conlcudes the proof of Lemma \ref{Lem:Psimajorizesu}, hence the proof of Theorem \ref{Thm:Dirichlet-Fm}.
\end{proof}

\vspace{3ex}

\noindent {\bf References.} \\

\noindent [Ber19]\, R. Berman --- {\it From Monge-Amp\`ere equations to envelopes and geodesic rays in the zero temperature limit} --- Math. Zeit. {\bf 291}, 1-2 (2019), 365-394.

\vspace{1ex}

\noindent [BK07]\, Z. B\l{}ocki, S. Ko\l{}odziej --- {\it On Regularization of Plurisubharmonic Functions on Manifolds} --- Proc. Amer. Math. Soc. {\bf 135} (2007), 2089–2093.



\vspace{1ex}

\noindent [ChX25]\, J. Cheng, Y. Xu --- {\it Viscosity solution to complex Hessian equations on compact Hermitian manifolds} --- J. Funct. Anal. {\bf 289}, 5 (2025), art. 110936.

\vspace{1ex}

\noindent [CP22]\, {T. C. Collins, S. Picard, --- {\it The Dirichlet problem for the $k$-Hessian equation on a complex manifold} --- Amer. J. Math. {\bf 144} (2022), 1641-1680.}

\vspace{1ex}

\noindent [Dem 84]\, J.-P. Demailly --- {\it Sur l'identit\'e de Bochner-Kodaira-Nakano en g\'eom\'etrie hermitienne} --- S\'eminaire d'analyse P. Lelong, P. Dolbeault, H. Skoda (editors) 1983/1984, Lecture Notes in Math., no. {\bf 1198}, Springer Verlag (1986), 88-97.

\vspace{1ex}

\noindent [Dem92]\, J.-P. Demailly --- {\it Regularization of Closed Positive Currents and Intersection Theory} --- J. Alg. Geom., {\bf 1} (1992), 361-409.

\vspace{1ex}

\noindent [Dem97]\, J.-P. Demailly --- {\it Complex Analytic and Algebraic Geometry} --- \url{https://www-fourier.ujf-grenoble.fr/~demailly/manuscripts/agbook.pdf}



\vspace{1ex}

\noindent [Die06]\, N. Q. Dieu --- {\it $q$-Plurisubharmonicity and $q$-Pseudoconvexity in $\C^n$} --- Publ. Mat., Barc. {\bf 50} (2006), no. 2, p. 349-369.

\vspace{1ex}

\noindent [Din22]\, S. Dinew --- {\it $m$-subharmonic and $m$-plurisubharmpnic functions: on two problems of Sadullaev} --- Ann. Fac. Sci. Toulouse Math. (6) {\bf 31} (2022), no. 3, 995-1009.

\vspace{1ex}

\noindent [DP25]\, S. Dinew, D. Popovici --- {\it $m$-Pseudo-effectivity and a Monge-Amp\`ere-Type Equation for Forms of Positive Degree} --- preprint 2025.

\vspace{1ex}

\noindent[Do23]\, W. Dong --- {\it The Dirichlet problem for Monge-Amp\`ere type equations on Hermitian manifolds} --- Discr. Cont. Dynam. Syst. {\bf 43} (2023), no. 11, 3925-3939.

\vspace{1ex}

\noindent[GGQ22]\, M. George, B. Guan, C. Qiu --- {\it Fully nonlinear Elliptic Equations on Hermitian manifolds for Symmetric Functions of Partial Laplacians} --- J. Geom. Anal. {\bf 32} (2022) Paper no. 183.

\vspace{1ex}

\noindent[GN18]\, {D. Gu, N. C. Nguyen --- {\it The Dirichlet problem for a complex Hessian equation on compact Hermitian manifolds with boundary}
--- Ann. Sc. Norm. Super. Pisa Cl. Sci. {\bf 18} (2018), no. 4, 1189-1248.}

\vspace{1ex}

\noindent [HL13]\, F.R. Harvey, H.B. Lawson --- {\it p-Convexity, p-Plurisubharmonicity and the Levi Problem} --- Indiana Univ. Math. J. {\bf62} (2013), no. 1, 149-169.


\noindent [HL13a]\, F.R. Harvey, H. B. Lawson --- {\it The equivalence of viscosity and distributional subsolutions for convex subequations --- a strong Bellman principle}--- Bull. Braz. Math. Soc. (N.S.) {\bf44} (2013), no. 4, 621-652.

\vspace{1ex}

\noindent [HLP16]\, F.R. Harvey, H.B. Lawson, Sz. Pli\'s --- {\it Smooth approximation of plurisubharmonic functions on almost complex manifolds} --- Math. Ann. {\bf 366}, 3 (2016), 929-940.



\vspace{1ex}

\noindent [LN15]\, H. C. Lu, V. D. Nguyen --- {Degenerate complex Hessian equations on compact K\"ahler manifolds} --- Indiana Univ. Math. J. {\bf 64}, 6 (2015),  1721-1745.

\vspace{1ex}

\noindent [Pli13]\, Sz. Pli\'s --- {\it The smoothing of $m$-subharmonic functions} --- preprint arXiv 1312.1906v2.





\vspace{1ex}

\noindent [Ver10]\, M. Verbitsky --- {\it Plurisubharmonic Functions in Calibrated Geometry and q-Convexity} --- Math. Z. {\bf 264} (2010), no. 4, p. 939-957.



\vspace{1ex}

\noindent [Yan18]\, X. Yang --- {\it A Partial Converse to the Andreotti-Grauert Theorem} --- Compos. Math. {\bf 155} (2019), no. 1, 89–99.

\vspace{1ex}

\noindent [Yau78]\, S.T. Yau --- {\it On the Ricci Curvature of a Complex K\"ahler Manifold and the Complex Monge-Amp\`ere Equation I} --- Comm. Pure Appl. Math. {\bf 31} (1978) 339-411.

\vspace{6ex}

\noindent Department of Mathematics and Computer Science     \hfill Institut de Math\'ematiques de Toulouse,

\noindent Jagiellonian University     \hfill  Universit\'e Paul Sabatier,

\noindent 30-409 Krak\'ow, Ul. Lojasiewicza 6, Poland    \hfill  118 route de Narbonne, 31062 Toulouse, France

\noindent  Email: Slawomir.Dinew@im.uj.edu.pl                \hfill     Email: popovici@math.univ-toulouse.fr

\end{document}